\begin{document}

\newtheorem{lem}{Lemma}[section]
\newtheorem{pro}[lem]{Proposition}
\newtheorem{defi}[lem]{Definition}
\newtheorem{def/not}[lem]{Definition/Notations}
\newtheorem{thm}[lem]{Theorem}
\newtheorem{ques}[lem]{Question}
\newtheorem{cor}[lem]{Corollary}
\newtheorem{rem}[lem]{Remark}
\newtheorem{rqe}[lem]{Remarks}
\newtheorem{exa}[lem]{Example}
\newtheorem{exas}[lem]{Examples}
\newtheorem{obs}[lem]{Observation}
\newtheorem{corcor}[lem]{Corollary of the corollary}
\newtheorem*{ackn}{Acknowledgements}

\newcommand{\C}{\mathbb{C}}
\newcommand{\R}{\mathbb{R}}
\newcommand{\N}{\mathbb{N}}
\newcommand{\Z}{\mathbb{Z}}
\newcommand{\Q}{\mathbb{Q}}
\newcommand{\Proj}{\mathbb{P}}
\newcommand{\Rc}{\mathcal{R}}
\newcommand{\Oc}{\mathcal{O}}
\newcommand{\diff}{\textit{diff}}

\begin{center}
{\Large\bf  THE HESSIAN EQUATION IN QUATERNIONIC SPACE}

\end{center}
\begin{center}
{\large Hichame Amal \footnote{Department of mathematics, Laboratory LaREAMI, Regional Center of trades of education and training, Kenitra Morocco,  hichameamal@hotmail.com},
 Sa\"{\i}d Asserda \footnote{Ibn tofail university, faculty of sciences, department of mathematics, PO 242 Kenitra Morroco, said.asserda@uit.ac.ma},
 Mohamed Barloub\footnote{Ibn tofail university, faculty of sciences, department of mathematics, PO 242 Kenitra Morroco, mohamed.barloub@uit.ac.ma}
}
\end{center}
\noindent{\small{\bf Abstract.}
In this paper, we introduce  $m$-subharmonic functions in quaternionic space $\mathbb{H}^{n}$,  we define the quaternionic Hessian operator and  solve the homogeneous Dirichlet problem for the quaternionic Hessian equation on the unit ball  with continuous boundary  data.
 
\noindent{\small{\bf Keywords.}
 Potential theory in quaternionic space, $m$-subharmonic function, quaternionic Hessian equation, Dirichlet problem.

\noindent{\small{\bf Mathematics Subject Classification} 32U15, 35J60.
\vspace{1ex}
\section*{Introduction}\label{section:introduction}

 The Baston operator $\Delta$  is the first operator of $0$-Cauchy-Fueter complex on quaternionic manifold:
$$ 0 \longrightarrow  C^{\infty}(\Omega, \mathbb{C})\overset \Delta \longrightarrow  C^{\infty}(\Omega, \wedge^{2}\mathbb{C}^{2n}) \overset D \longrightarrow  C^{\infty}(\Omega, \mathbb{C}^{2}\otimes \wedge^{3}\mathbb{C}^{2n})\longrightarrow  \cdots $$
Alesker defined the quaternionic Monge-Amp\`{e}re operator as the $n$-th power of this
operator  when the manifold is flat. 
Motivated by this formula Wan and Wang in \cite{W2} introduced  two first-order differential operators $d_{0}$ and $d_{1}$ which behaves similarly as $\partial,\; \overline{\partial}$ and $\partial\overline{\partial}$  in complex pluripotential theory, and write $\Delta=d_{0}d_{1}.$ Therefore the quaternionic Monge-Amp\`{e}re operator $(\Delta u)^{n}$ has a simpler  explicit expression,
on this observation, some authors established and developed the quaternionic versions of several results in complex pluripotential theory.
 We consider the following Dirichlet problem for quaternionic Hessian equation in $\Omega\subset \mathbb{H}^{n}$
 \begin{equation}\label{000}  \ref{}
\left\{
     \begin{array}{lll}
            (\Delta u)^m\wedge\beta^{n-m}=fdV,\;\;\;1\leq m \leq n\\
        u_{\vert_{ \partial \Omega}}=\varphi.
     \end{array}
   \right.
\end{equation}

For $m=n$, Alesker  proved in \cite{A1}  that (\ref{000}) is solvable  when $\Omega$ is a strictly pseudoconvex domain, $f\in C(\Omega),\; f\geq 0,\; \varphi\in C(\partial \Omega)$ and the solution is  a continuous plurisubharmonic function. For the smooth case, in \cite{A1}
 he proved a result on the existence and the uniqueness of a  smooth plurisubharmonic  solution  of  (\ref{000}) when $\Omega$ is the euclidian ball in $\mathbb{H}^{n}$ and $f\in C^{\infty}(\Omega) ,\;f> 0,\; \varphi \in C^{\infty} (\partial \Omega)$.
Zhu extended this result in \cite{Z}
 when $\Omega$ is a bounded strictly pseudoconvex domain in $\mathbb{H}^{n}$ provided the existence of a subsolution. Wang and Zhang proved the maximality of locally bounded plurisubharmonic solution to (\ref{000}) with smooth boundary when $\Omega$ is an open set of $\mathbb{H}^{n}$ with $f=0$ and $\varphi \in L_{loc}^{\infty}(\Omega)$. In \cite{S}, Sroka solved
the Dirichlet problem when the right hand side $f$ is merely in $L^p$ for any $p>2$, which is the optimal bound. 
 The h\"{o}lder regularity of the solution  was recently proved independently by Boukhari in \cite{BO} and by Kolodziej and Sroka in \cite{KS},  when $\Omega$ is strongly pseudoconvex bounded domain in $\mathbb{H}^{n}$ with smooth boundary when $\varphi \in C^{1,1}(\partial\Omega),\;0\leq f \in L^{p}(\Omega)$ for $p> 2.$

  For $1 \leq m < n$, in the real case, i.e. when $\Omega$ is a bounded domain in $\mathbb{R}^{n}$, Caffarelly, Nirenberg and Spruck proved in \cite{CNS} the existence and the uniqueness of  smooth admissible solution to the real Hessian equation with $f> 0$ and $\varphi\in C^{\infty}(\overline{\Omega})$ under some suitable conditions.
The complex Hessian equation  was first considered by Li in \cite{Li} where he proved in the spirit of \cite{CNS} that the problem is solvable in the smooth category of admissible functions. In \cite{B}, Blocki gived a natural domain of definition for the complex Monge-Ampère operator and solved the degenerate complex Hessian equation.
For  $ f \geq 0,\;f\in L^{p}$, other authors proved the continuity and the H\"{o}lder of the solution to the Dirichlet problem and developed the theory for the complex Hessian operator see \cite{CHi,DK,NG,SA}.

In this paper, we introduce the class $\mathcal{SH}_{m}$ of $m$-subharmonic functions in $\mathbb{H}^{n},$ the  quaternionic Hessian operator and  give some facts about related pluripotential theory.
 We consider then the following Dirichlet problem
\begin{equation}\label{00}
\left\{
     \begin{array}{lll}
       u\in\mathcal{SH}_{m}(B)\cap C(\overline{B}), \\
        (\Delta u)^m\wedge\beta^{n-m}=0,\\
        u=\varphi       \;\;\;\; \hbox{on}\;\;\;\; \partial B,
     \end{array}
   \right.
\end{equation}
where  $ B$ is the unit  ball and  $\beta=\dfrac{1}{8}\Delta(\parallel q \parallel^{2}).$
We show the existence and the uniqueness of the  solution to (\ref{00}).

Recently Liu and Wang\footnote{We are grateful to the referee for drawing our attention to the existence of this article.} in \cite{LW} introduced and studied  independently $m$-subharmonic functions in quaternion space. Our work and that of Liu and Wang complement each other, the study of the  $m$-Lelong number was done in \cite{LW} and in our work we studied the Dirichlet problem in the unit ball of $\mathbb{H}^{n}$ and maximality.

The paper is organized as follows. In section \ref{sec1}, we recall basic facts about quaternionic linear algebra  and the Baston operator. In section \ref{sec2}, as in the complex case we introduce the concepts of  $m$-subharmonic functions in $\mathbb{H}^{n}$, we define the quaternionic Hessian operator, the capacity, etc. In section \ref{sec3},  we establish a global priori estimates and prove the existence and the uniqueness of solutions to the Dirichlet problem (\ref{00}). Finally, as an application we give a characterization of maximal  $m$-subharmonic functions.

 \section{Prelimenaries}\label{sec1}
 \subsection{Quaternionic linear algebra}\label{7777}

 Throughout this article, we consider  $\mathbb{H}^{n}$ as a right $\mathbb{H}$-vector space, i.e., vectors
are multiplied by scalars on the right. The standard theory of vector spaces,
bases, and dimensions works over $\mathbb{H}^{n}$, exactly like in the commutative case.

\begin{defi}
 A quaternion square matrix $A \in \mathcal{M}_{n}(\mathbb{H})$ is said to be
 Hyperhermitian if $ \overline{A}^{\top}=A, $
 where $ \overline{A} $ means the quaternion conjugate of $ A $ and $ \overline{A}^{\top} $ means the transpose of $  \overline{A} ,$
for simplicity we set $ \overline{A}^{\top}=A^{\ast}. $
\end{defi}
\begin{defi}
Let $ M $ be a quaternion matrix, we say that a quaternion number $ \lambda $ is a right eigenvalue of $ M $ if there exists a unique quaternion vector $ x\neq 0 $ such that $ Mx=x\lambda .$
\end{defi}
\begin{pro}[\cite{W3}]\label{pro00} For a real valued $\mathcal{C}^2$ function $u$, the matrix $\Big[\displaystyle\frac{\partial^2u}{\partial q_j\partial\bar{q}_k}\Big]$ is hyperhermitian.
\end{pro}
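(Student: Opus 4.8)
The statement is essentially a computation with the explicit form of the first--order operators $\partial/\partial q_j$ and $\partial/\partial\bar q_k$, so the plan is to write everything out and carefully track the quaternionic non--commutativity; nothing deeper is needed. Write the $l$--th quaternionic coordinate as $q_l=x_l^{0}+x_l^{1}i+x_l^{2}j+x_l^{3}k$ and put $e_0=1,\ e_1=i,\ e_2=j,\ e_3=k$. First I would recall from Section~\ref{7777} that, up to the normalisation fixed there,
\[
\frac{\partial}{\partial\bar q_k}=\sum_{b=0}^{3} e_b\,\frac{\partial}{\partial x_k^{b}},\qquad
\frac{\partial}{\partial q_j}=\sum_{a=0}^{3} \bar e_a\,\frac{\partial}{\partial x_j^{a}},
\]
and then expand the $(j,k)$ entry of the matrix as the double sum
\[
\frac{\partial^2 u}{\partial q_j\,\partial\bar q_k}
=\sum_{a,b=0}^{3} \bar e_a\, e_b\,\frac{\partial^2 u}{\partial x_j^{a}\partial x_k^{b}} .
\]
This is the one place the hypothesis ``$u$ real valued'' is used: each real partial derivative $\partial^2 u/\partial x_j^{a}\partial x_k^{b}$ is a real scalar, hence central in $\mathbb{H}$, so there is no ambiguity about where it sits in the product and no sign picked up by commuting it out.

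Next I would take the quaternionic conjugate, using the three structural properties of conjugation on $\mathbb{H}$: it is $\R$--linear, it is an involution, and it reverses products, so that $\overline{\bar e_a e_b}=\bar e_b e_a$. This yields
\[
\overline{\left(\frac{\partial^2 u}{\partial q_j\,\partial\bar q_k}\right)}
=\sum_{a,b=0}^{3} \bar e_b\, e_a\;\frac{\partial^2 u}{\partial x_j^{a}\partial x_k^{b}} .
\]
On the other hand the $(k,j)$ entry is $\sum_{a,b}\bar e_a e_b\,\partial^2 u/\partial x_k^{a}\partial x_j^{b}$; relabelling the summation indices $a\leftrightarrow b$ and then invoking Schwarz's theorem (legitimate since $u\in C^{2}$, so the mixed partials in $x_j^{a}$ and $x_k^{b}$ commute) turns it into exactly the right--hand side above. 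Hence $\overline{(\partial^2 u/\partial q_j\partial\bar q_k)}=\partial^2 u/\partial q_k\partial\bar q_j$ for all $j,k$, which is precisely the hyperhermitian condition $A^{\ast}=A$ for $A=[\partial^2 u/\partial q_j\partial\bar q_k]$.

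The argument is routine, and the only place it could go wrong is the interplay of conjugation with non--commutativity: conjugation reverses the order of a product, and the two operators multiply the imaginary units on prescribed sides (as fixed in Section~\ref{7777}) — it is exactly the compatibility of these sign and side conventions that makes the Hessian hyperhermitian rather than an unstructured quaternionic matrix. So before trusting the general relabelling I would sanity--check the diagonal, where $A^{\ast}=A$ forces a real entry: indeed the off--diagonal contributions cancel in pairs, $\bar e_a e_b+\bar e_b e_a=0$ for $a\ne b$, leaving $\partial^2 u/\partial q_j\partial\bar q_j=\sum_{a}\partial^2 u/\partial(x_j^{a})^{2}$, which is real, and I would likewise check one off--diagonal pair of entries.
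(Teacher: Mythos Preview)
Your argument is correct: expanding both operators in the quaternionic units, using that the real second partials of $u$ are central, that conjugation reverses products, and that Schwarz's theorem lets you swap the real partials, gives $\overline{\partial^2 u/\partial q_j\partial\bar q_k}=\partial^2 u/\partial q_k\partial\bar q_j$ exactly as you wrote. The diagonal sanity check is also fine.

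There is nothing to compare against in the paper: Proposition~\ref{pro00} is stated with a citation to \cite{W3} and no proof is given here. Your computation is the standard one and is essentially what one finds in the cited reference, so you have supplied the omitted argument rather than diverged from it.
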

\begin{thm}[see pages 145, 146 in \cite{J}]\label{pro0}
If $ M $ is a hyperhermitian matrix, then all right eigenvalues of $ M $ are real numbers.
\end{thm}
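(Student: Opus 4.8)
\medskip

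\noindent\textit{Proof sketch (proposed approach).}
The plan is to exploit the sesquilinear form $(x,y)\mapsto x^{\ast}My$ attached to $M$ and to show that it is ``real on the diagonal''. The statement then drops out by pairing $M$ against an eigenvector.

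First, I would record the elementary identity $(AB)^{\ast}=B^{\ast}A^{\ast}$ for quaternionic matrices, which follows at once from the order-reversal rule $\overline{pq}=\overline{q}\,\overline{p}$ for quaternions together with the definition of $(\cdot)^{\ast}$. Applying it to column vectors $x,y\in\mathbb{H}^{n}$ and using the hypothesis $M^{\ast}=M$ gives $(x^{\ast}My)^{\ast}=y^{\ast}M^{\ast}x=y^{\ast}Mx$; since $x^{\ast}My$ is a $1\times1$ matrix, its conjugate transpose is simply the quaternionic conjugate, so $\overline{x^{\ast}My}=y^{\ast}Mx$. In particular, taking $y=x$, the quaternion $x^{\ast}Mx$ equals its own conjugate, hence $x^{\ast}Mx\in\mathbb{R}$ for every $x\in\mathbb{H}^{n}$.

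Next, let $\lambda$ be a right eigenvalue of $M$ with eigenvector $x\neq0$, so $Mx=x\lambda$. Multiplying on the left by $x^{\ast}$ yields $x^{\ast}Mx=x^{\ast}(x\lambda)=(x^{\ast}x)\lambda$. The scalar $x^{\ast}x=\sum_{i=1}^{n}|x_{i}|^{2}$ is a strictly positive real number because $x\neq0$; being real it is central in $\mathbb{H}$, so we may write $x^{\ast}Mx=\lambda\,(x^{\ast}x)$ and hence $\lambda=(x^{\ast}Mx)/(x^{\ast}x)$. By the previous paragraph the numerator is real and the denominator is a positive real, so $\lambda\in\mathbb{R}$, as claimed.

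I do not expect a serious obstacle here; the only point requiring care is the quaternionic bookkeeping. Conjugation reverses the order of products, which is why $(AB)^{\ast}=B^{\ast}A^{\ast}$ rather than $A^{\ast}B^{\ast}$; and because $\mathbb{H}^{n}$ is a \emph{right} $\mathbb{H}$-vector space one must keep the eigenvalue scalar $\lambda$ on the right throughout, moving it past $x^{\ast}x$ only after observing that $x^{\ast}x$ is real. One could also remark, as a sanity check, that the argument shows the right eigenvalues of a hyperhermitian matrix are genuine real numbers rather than nontrivial similarity classes $\{\mu^{-1}\lambda\mu:\mu\in\mathbb{H}^{\ast}\}$, which is consistent with $\mathbb{R}$ lying in the centre of $\mathbb{H}$.
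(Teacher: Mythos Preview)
The paper does not supply its own proof of this theorem; it is stated with a bare citation to pages 145--146 of \cite{J}, so there is nothing in the paper to compare your argument against. That said, your approach is the standard one and is correct: the key points---that $x^{\ast}Mx$ is self-conjugate hence real, that $x^{\ast}(x\lambda)=(x^{\ast}x)\lambda$ by associativity, and that $x^{\ast}x>0$ is central so one may divide---are all handled carefully, and your remarks on the bookkeeping (order reversal under conjugation, keeping $\lambda$ on the right) address exactly the places where a careless argument would go wrong.
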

\begin{pro}
Let $A$ be a matrix of a given hyperhermitian form in a given basis and
 $C$ be a transition matrix from this basis to another one. Then we have $ A^{'}=C^{\ast}AC $    where $ A^{'} $ denotes the matrix of the given form in the new basis.
\end{pro}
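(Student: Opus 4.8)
The plan is purely to unwind the definitions: write the hyperhermitian form in coordinates with respect to each of the two bases, relate the two coordinate systems through $C$, and compare. The only thing that needs watching is the non-commutativity of $\mathbb{H}$.

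Recall the set-up. A hyperhermitian form on the right $\mathbb{H}$-vector space $\mathbb{H}^n$ is a map $\phi\colon\mathbb{H}^n\times\mathbb{H}^n\to\mathbb{H}$, additive in each variable, with $\phi(x\lambda,y\mu)=\overline{\lambda}\,\phi(x,y)\,\mu$ for $\lambda,\mu\in\mathbb{H}$ and $\phi(y,x)=\overline{\phi(x,y)}$. In a basis $(e_1,\dots,e_n)$ its matrix is $A=(a_{jk})$ with $a_{jk}=\phi(e_j,e_k)$, and the relation $A^*=A$ is precisely the symmetry $\phi(y,x)=\overline{\phi(x,y)}$. If $X=(X_1,\dots,X_n)^\top$ is the coordinate column of $x$, i.e. $x=\sum_j e_jX_j$, then bi-additivity together with the twisted homogeneity gives
\[
\phi(x,y)=\sum_{j,k}\overline{X_j}\,a_{jk}\,Y_k=X^*AY,
\]
where $Y$ is the coordinate column of $y$ and $X^*=\overline{X}^\top$.

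Now bring in the change of basis. Let $C=(c_{ji})$ be the transition matrix, so the new basis vectors are $e'_i=\sum_j e_jc_{ji}$. If $X'$ is the coordinate column of $x$ in the new basis, then $x=\sum_i e'_iX'_i=\sum_j e_j\big(\sum_i c_{ji}X'_i\big)$, whence $X=CX'$, and likewise $Y=CY'$. Substituting into the displayed formula and using the identity $(PQ)^*=Q^*P^*$ for quaternionic matrices — which itself depends on $\overline{pq}=\overline{q}\,\overline{p}$ — we obtain
\[
\phi(x,y)=(CX')^*A(CY')={X'}^*\,(C^*AC)\,Y'.
\]
Since the matrix $A'$ of $\phi$ in the new basis is characterized by $\phi(x,y)={X'}^*A'Y'$ for all $x,y$ (evaluating on pairs of new basis vectors recovers its entries), comparison yields $A'=C^*AC$. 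As a consistency check, $(C^*AC)^*=C^*A^*C=C^*AC$, so $A'$ is again hyperhermitian, as it must be.

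There is really no hard step: the statement is formal. The two points that need care are the choice of convention for which slot of $\phi$ is conjugate-linear — this is what puts the conjugate on the left factor and hence produces the factor $C^*$ rather than $\overline{C}$ or an inverse — and the fact that quaternionic conjugation reverses products, which is exactly what makes $(PQ)^*=Q^*P^*$ hold and so delivers the two-sided form $C^*AC$ rather than something like $CAC^*$.
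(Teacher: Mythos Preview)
Your proof is correct; it is the standard direct verification of the change-of-basis formula, carried out with due attention to the non-commutativity of $\mathbb{H}$ and the convention that $\phi$ is conjugate-linear in the first slot. The paper itself states this proposition without proof (it is quoted as a basic fact of quaternionic linear algebra), so there is nothing to compare against; your argument supplies exactly the routine check the paper omits.
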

\begin{rem}
The matrix $ C^{\ast}AC $ is hyperhermitian for any hyperhermitian matrix $A$ and any matrix $C$.\\
In particular, $ C^{\ast}C $ is always hyperhermitian.
\end{rem}
\begin{defi}
A hyperhermitian matrix is called positive definite (resp semi positive definite) if $ X^{\ast}AX$ is positive for any vector $ X $ (resp non negative) for any non zero vector $ X .$
\end{defi}
Let $ A $ be a hyperhermitian $ n\times n $ matrix. Suppose $ \sigma $ is a permutation of $ \lbrace 1,\ldots,n\rbrace $.
Write $ \sigma $ as a product of dijoint cycles such that each cycle starts with the smallest number. Since disjoint cycles commute, we can write
$$ \sigma=(k_{11} \ldots k_{1j_{1}})(k_{21} \ldots k_{2j_{2}})\ldots(k_{m1} \ldots k_{mj_{m}}) $$ where for each $ i $ we have $ k_{i1}< k_{ij} $ for all $ j>1 $ and $ k_{11}>k_{21} \ldots >k_{m1} $. This expression is unique.

\begin{defi}
Let $ Sgn(\sigma)$ be the parity of   $\sigma. $
The Moore determinant of $ A $ is
\begin{center}
$$ detA=\sum_{\sigma \in S_{n}} Sgn(\sigma)a_{k_{11}k_{12}}...a_{k_{1j_{1}}k_{11}}a_{k_{21}k_{22}}...a_{k_{m}j_{m}k_{m1}}.$$
\end{center}
\end{defi}
\begin{exa}
\begin{enumerate}
\item[1)] Let$ A=diag(\lambda_{1},...,\lambda_{n})$ be a diagonal matrix with real $ \lambda_{i} $'s, then $A$ is hyperhermitian and the Moore determinant is $$ detA=\prod_{i=1}^{n}\lambda_{i} $$
\item[2)] A general hyperhermitian $ 2\times 2 $ matrix $A$ has the form:
\begin{center}
$ A=\begin{pmatrix}
a & q \\
\overline{q} & b
\end{pmatrix}  $
\end{center}
Where $ a,b \in \mathbb{R} $, $ q\in \mathbb{H} $, then $ detA=ab-q\overline{q} .$
\end{enumerate}
\end{exa}
\begin{rem}
\begin{itemize}
\item[i)] The Moore determinant is the best one for hyperhermitian matrix, because it has almost all algebraic and analytic properties of the usual determinant of real symetric and complexe hyperhermitian matrices.
\item[ii)] The moore determinant of a  hyperhermitian matrix $ M $ is real number and equal to the product of all eigenvalues of $ M .$
\end{itemize}

\end{rem}

\begin{lem}
Let $A$ be a non negative (respectively positive) definite hyperhermitian matrix.\\ Then $ detA \geq 0 $
 (respectively $ detA > 0 $).
 \end{lem}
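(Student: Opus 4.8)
The plan is to reduce the statement to the spectral description of hyperhermitian matrices recalled above: by Theorem~\ref{pro0} all right eigenvalues of a hyperhermitian matrix are real, and (as recalled in the Remark on the Moore determinant) $\det A$ equals the product of all these eigenvalues. Thus it suffices to prove that a semi-positive (resp.\ positive) definite hyperhermitian matrix has only non-negative (resp.\ strictly positive) eigenvalues, and then multiply.

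First I would fix a right eigenvalue $\lambda$ of $A$, which is a real number by Theorem~\ref{pro0}, together with an eigenvector $x\neq 0$, so that $Ax=x\lambda$. Multiplying on the left by $x^{\ast}$ gives $x^{\ast}Ax=x^{\ast}(x\lambda)=(x^{\ast}x)\lambda$, and here $x^{\ast}x=\sum_{j}\overline{x_{j}}x_{j}$ is a strictly positive real number, say $c$, so that $x^{\ast}Ax=c\lambda$; there is no non-commutativity obstruction in this manipulation because $c$ is real. Now I invoke the hypothesis via the Definition of positive definiteness: if $A$ is positive definite then $x^{\ast}Ax>0$, hence $\lambda=c^{-1}x^{\ast}Ax>0$, while if $A$ is merely semi-positive definite then $x^{\ast}Ax\geq 0$, hence $\lambda\geq 0$. (As a consistency check one may note $(x^{\ast}Ax)^{\ast}=x^{\ast}A^{\ast}x=x^{\ast}Ax$ since $A^{\ast}=A$, so $x^{\ast}Ax$ is indeed a real quaternion, in agreement with $\lambda\in\mathbb{R}$.) Finally, since $\det A$ is the product of all right eigenvalues of $A$ and each of them is $>0$ (resp.\ $\geq 0$), we conclude $\det A>0$ (resp.\ $\det A\geq 0$).

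The argument is short precisely because it rests on two quoted inputs — reality of the right eigenvalues and the identity "$\det{}=$ product of eigenvalues" for hyperhermitian matrices; this is where the real content sits, and if one wanted a self-contained proof the main obstacle would be to replace them. The natural alternative route is to diagonalize the hyperhermitian form: write $C^{\ast}AC=\operatorname{diag}(\lambda_{1},\dots,\lambda_{n})$ with $\lambda_{i}\in\R$ using the congruence $A'=C^{\ast}AC$ of the Proposition above, note that positive definiteness is preserved under $X\mapsto CX$ so that testing against the standard basis vectors yields $\lambda_{i}>0$ (resp.\ $\geq 0$), and then transfer the sign back through the multiplicativity of the Moore determinant under congruence, $\det(C^{\ast}AC)=|\det C|^{2}\det A$. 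Establishing that multiplicativity formula is the delicate step I would expect to cost the most effort on that route.
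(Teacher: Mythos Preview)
Your argument is correct. The paper itself does not give a proof of this lemma at all; it simply refers the reader to \cite{A1} (Lemma~1.1.12 there). Your proposal supplies an actual argument, and it is sound: the key computation $x^{\ast}(x\lambda)=\sum_{j}\overline{x_{j}}(x_{j}\lambda)=\bigl(\sum_{j}\overline{x_{j}}x_{j}\bigr)\lambda=c\lambda$ goes through by associativity and distributivity alone (each $\overline{x_{j}}x_{j}$ is a non-negative real, so the sum $c$ is a strictly positive real), and combining this with the two inputs already recorded in the paper --- Theorem~\ref{pro0} (reality of right eigenvalues) and the Remark identifying $\det A$ with the product of eigenvalues --- closes the argument. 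Your alternative congruence/diagonalization route is also the one taken in Alesker's original proof; as you note, its cost is the multiplicativity property of the Moore determinant, which is a separate theorem there.
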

\begin{proof}
See (lemma 1.1.12) in \cite{A1}
\end{proof}

\subsection{Baston operator $\Delta $ }
 First,we  use the well
known embedding of the quaternionic algebra $ \mathbb{H}$ into $ End(\mathbb{C}^{2})$ defined by:
$$\;\; x_{0}+\mathrm{i}x_{1}+\mathrm{j}x_{2}+\mathrm{k}x_{3} \longrightarrow \begin{pmatrix}
x_{0}+\mathrm{i}x_{1} &-x_{2}-\mathrm{i}x_{3}  \\
x_{2}-\mathrm{i}x_{3} & x_{0}-\mathrm{i}x_{1}
\end{pmatrix},$$
 and the conjugate embedding
$$\begin{array}{c}
\;\; \tau : \mathbb{H}^{n} \cong \mathbb{R}^{4n} \hookrightarrow \mathbb{C}^{2n \times 2} \\
(q_{0},\cdots ,q_{n-1})\mapsto (z^{j\alpha})\in \mathbb{C}^{2n \times 2}
\end{array}$$
$q_{l}=x_{4l}+\mathrm{i}x_{4l+1}+\mathrm{j}x_{4l+2}+\mathrm{k}x_{4l+3}\;,l=0,1,\cdots,n-1\;\;\alpha=0,1$ with
  \begin{equation}\label{1}
  \begin{pmatrix}
 z^{00} & z^{01} \\
  z^{10}&z^{11}  \\
 \vdots &\vdots  \\
z^{(2l)0} & z^{(2l)1}  \\
z^{(2l+1)0} & z^{(2l+1)1}  \\
 \vdots &\vdots  \\
  z^{(2n-2)0}& z^{(2n-2)1}  \\
z^{(2n-1)0} & z^{(2n-1)1}
\end{pmatrix}:=
\begin{pmatrix}
x_{0}-\mathrm{i}x_{1} &-x_{2}+\mathrm{i}x_{3} \\
x_{2} + \mathrm{i}x_{3} & x_{0} + \mathrm{i}x_{1} \\
\vdots  & \vdots \\
x_{4l} - \mathrm{i}x_{4l+1} & -x_{4l+2} + \mathrm{i}x_{4l+3} \\
x_{4l+2} + \mathrm{i}x_{4l+3} & x_{4l} + \mathrm{i}x_{4l+1} \\
\vdots & \vdots \\
x_{4n-4} -\mathrm{i}x_{4n-3} & -x_{4n-2} +\mathrm{i}x_{4n-1} \\
x_{4n-2} + \mathrm{i}x_{4n-3} & x_{4n-4} + \mathrm{i}x_{4n-3}
\end{pmatrix}.
\end{equation}
Pulling back to the quaternionic space $ \mathbb{H}^{n} \cong \mathbb{R}^{4n} $ by  (\ref{1}), we define on $\mathbb{R}^{4n}$ first-order
differential operators $ \bigtriangledown_{j\alpha} $ as follows:
\begin{equation}\label{4}
 \begin{pmatrix}
 \bigtriangledown_{00} & \bigtriangledown_{01} \\
  \bigtriangledown_{10}&\bigtriangledown_{11}  \\
 \vdots &\vdots  \\
\bigtriangledown_{(2l)0} & \bigtriangledown_{(2l)1}  \\
\bigtriangledown_{(2l+1)0} & \bigtriangledown_{(2l+1)1}  \\
 \vdots &\vdots  \\
  \bigtriangledown_{(2n-2)0}& \bigtriangledown_{(2n-2)1}  \\
\bigtriangledown_{(2n-1)0} & \bigtriangledown_{(2n-1)1}
\end{pmatrix}:=
\begin{pmatrix}
\partial_{x_{0}}+\mathrm{i}\partial_{x_{1}} &-\partial_{x_{2}}-\mathrm{i}\partial_{x_{3}} \\
\partial_{x_{2}} - \mathrm{i}\partial_{x_{3}} & \partial_{x_{0}} - \mathrm{i}\partial_{x_{1}} \\
\vdots  & \vdots \\
\partial_{x_{4l}} + \mathrm{i}\partial_{x_{4l+1}} & -\partial_{x_{4l+2}} - \mathrm{i}\partial_{x_{4l+3}} \\
\partial_{x_{4l+2}} - \mathrm{i}\partial_{x_{4l+3}} & \partial_{x_{4l}}- \mathrm{i}\partial_{x_{4l+1}} \\
\vdots & \vdots \\
\partial_{x_{4n-4}} +\mathrm{i}\partial_{x_{4n-3}} & -\partial_{x_{4n-2}} -\mathrm{i}\partial_{x_{4n-1}} \\
\partial_{x_{4n-2}} - \mathrm{i}\partial_{x_{4n-1}} & \partial_{x_{4n-4}} - \mathrm{i}\partial_{x_{4n-3}}
\end{pmatrix}
\end{equation}
 The Baston operator is given by the determinants of $(2 \times 2)$-submatrices above.
Let $\wedge^{2k}\mathbb{C}^{2n}$ be the complex exterior algebra generated by $\mathbb{C}^{2n} , 0 \leq k \leq n$.
 Fix a basis $\lbrace \omega^{0},\omega^{1}\cdots,\omega^{2n-1}\rbrace$ of $\mathbb{C}^{2n}$. Let $\Omega$ be a domain in $\mathbb{R}^{4n}$. We define $$d_{0},d_{1} : C_{0}^{\infty}(\Omega,\wedge^{p}\mathbb{C}^{2n}) \longrightarrow C_{0}^{\infty}(\Omega,\wedge^{p+1}\mathbb{C}^{2n})\;\;\mbox{ by}\;$$
$$d_{0}F :=\sum_{k,I}\bigtriangledown_{k0}f_{I}\omega^{k} \wedge\omega^{I}$$
$$d_{1}F :=\sum_{k,I}\bigtriangledown_{k1}f_{I}\omega^{k} \wedge\omega^{I}$$ and
$$\Delta F:=d_{0}d_{1}F$$ for $F=\sum_{I}f_{I}\omega^{I}\in C_{0}^{\infty}(\Omega,\wedge^{p}\mathbb{C}^{2n}),$
 where $ \omega^{I}:= \omega^{i_{1}} \wedge \ldots  \wedge \omega^{i_{p}} $ for the multi-index $I = (i_{1},\ldots,i_{p})$.
The operators $d_{0}$ and $ d_{1}$ depend on the choice of the coordinates $ x_{j}$’s and the basis $\lbrace \omega^{j}\rbrace$.

 It is known (cf.\cite{W2}) that the second operator $D$ in the 0-Cauchy-Fueter complex can be written as $DF:=\left(
                                                          \begin{array}{c}
                                                            d_{0}F \\
                                                            d_{1}F \\
                                                          \end{array}
                                                        \right).$\\
   Although $d_{0},d_{1}$  are not exterior differential, their behavior is similar to exterior differential:
   \begin{lem}
   $d_{0}d_{1}=-d_{1}d_{0}$,  $d_{0}^{2}=d_{1}^{2}=0$; for $F\in C^{\infty}_{0}(\Omega,\wedge^{p}\mathbb{C}^{2n}),$ $G\in C^{\infty}_{0}(\Omega,\wedge^{q}\mathbb{C}^{2n}),$ we have  \begin{eqnarray}\label{eq3}
   d_{\alpha}(F\wedge G)=d_{\alpha}F\wedge G+(-1)^{p}F\wedge d_{\alpha}G, \        \ \alpha=0,1, \ \ d_{0}\Delta=d_{1}\Delta=0
   \end{eqnarray}
   \end{lem}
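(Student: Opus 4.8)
The plan is to reduce every assertion to one elementary observation: the operators $\bigtriangledown_{k\alpha}$ introduced in $(\ref{4})$ are constant-coefficient first-order differential operators on $\R^{4n}$, hence they commute pairwise (for all indices $k,l$ and all $\alpha,\beta\in\{0,1\}$) and each obeys the ordinary Leibniz rule $\bigtriangledown_{k\alpha}(fg)=(\bigtriangledown_{k\alpha}f)g+f(\bigtriangledown_{k\alpha}g)$ on functions. Once this is recorded, the whole lemma is the classical bookkeeping argument used for $\partial,\overline\partial,d$ in the commutative theory, with $\bigtriangledown_{k\alpha}$ in place of $\partial_{x_j}$ and the exterior algebra $\wedge^\bullet\C^{2n}$ in place of the de Rham algebra.

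First I would prove the graded Leibniz rule. Writing $F=\sum_I f_I\omega^I$ with $|I|=p$ and $G=\sum_J g_J\omega^J$, I expand $d_\alpha(F\wedge G)=\sum_{k,I,J}\bigtriangledown_{k\alpha}(f_Ig_J)\,\omega^k\wedge\omega^I\wedge\omega^J$ and split $\bigtriangledown_{k\alpha}(f_Ig_J)=(\bigtriangledown_{k\alpha}f_I)g_J+f_I(\bigtriangledown_{k\alpha}g_J)$. The first batch of terms reassembles directly to $d_\alpha F\wedge G$; in the second batch one moves $\omega^k$ past the $p$-form $\omega^I$, which costs the sign $(-1)^p$ and produces $(-1)^pF\wedge d_\alpha G$. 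This gives the stated formula for $\alpha=0,1$.

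Next I would treat the three relations $d_0d_1=-d_1d_0$, $d_0^2=d_1^2=0$ by the same computation on a $p$-form $F=\sum_I f_I\omega^I$: one gets $d_\beta d_\alpha F=\sum_{l,k,I}\bigtriangledown_{l\beta}\bigtriangledown_{k\alpha}f_I\,\omega^l\wedge\omega^k\wedge\omega^I$. For $\alpha=\beta$ the coefficient is symmetric in $(l,k)$ while $\omega^l\wedge\omega^k$ is antisymmetric, so the sum cancels in pairs and $d_\alpha^2=0$. For $\{\alpha,\beta\}=\{0,1\}$, relabeling $l\leftrightarrow k$ in the expression for $d_1d_0F$ and using $\bigtriangledown_{k1}\bigtriangledown_{l0}=\bigtriangledown_{l0}\bigtriangledown_{k1}$ together with $\omega^k\wedge\omega^l=-\omega^l\wedge\omega^k$ identifies $d_1d_0F$ with $-d_0d_1F$. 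Finally, since $\Delta=d_0d_1$, the last relations follow formally: $d_0\Delta=d_0^2d_1=0$ and $d_1\Delta=d_1d_0d_1=-d_0d_1^2=0$.

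The only point that needs genuine care — and the closest thing to an obstacle — is the sign bookkeeping: in the Leibniz rule one must count exactly how many factors $\omega^k$ is transported past, and in the anticommutation argument one must be consistent about which index is relabeled. Everything substantive, namely the commutativity $\bigtriangledown_{l\beta}\bigtriangledown_{k\alpha}=\bigtriangledown_{k\alpha}\bigtriangledown_{l\beta}$, is immediate from the explicit constant-coefficient expressions in $(\ref{4})$, so no analytic input (and in particular nothing about compact support) is required.
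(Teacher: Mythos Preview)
Your proposal is correct. The paper itself does not supply a proof of this lemma; it is stated without proof as a basic property of the operators $d_0,d_1,\Delta$ (these facts are established in the reference \cite{W2} of Wan and Wang). Your argument is exactly the standard one: commutativity of the constant-coefficient operators $\bigtriangledown_{k\alpha}$ combined with the antisymmetry of $\omega^l\wedge\omega^k$ and elementary sign bookkeeping in the exterior algebra.
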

   We say $F$ is closed if $d_{0}F=d_{1}F=0,$ ie, $DF=0.$ For $u_{1},u_{2},\ldots,u_{n}\in C^{2},$ $\Delta u_{1}\wedge\ldots\wedge\Delta u_{k}$ is closed, with $k=1,\ldots,n.$
   Moreover, it follows easily from (\ref{4}) that $ \Delta u_{1}\wedge\ldots\wedge\Delta u_{n}$ satisfies the following remarkable identities:
   $$\Delta u_{1}\wedge\ldots\wedge\Delta u_{n}=d_{0}(d_{1}u_{1}\wedge\Delta u_{2}\wedge\ldots\wedge\Delta u_{n})=-d_{1}(d_{0}u_{1}\wedge\Delta u_{2}\wedge\ldots\wedge\Delta u_{n})$$
$$=d_{0}d_{1}(u_{1}\wedge\Delta u_{2}\wedge\ldots\wedge\Delta u_{n})=\Delta(u_{1}\wedge\triangle u_{2}\wedge\ldots\wedge\Delta u_{n}).$$
To write down the explicit expression, we define for a function $u\in C^{2},$
$$\Delta_{ij}u:= \frac{1}{2}(\nabla_{i0}\nabla_{j1}u-\nabla_{i1}\nabla_{j0}u).$$
$2\Delta_{ij}$ is the determinant of $(2\times 2)$-matrix of i-th and j-th rows of (\ref{4}).
 Then we can write
 \begin{equation}\label{5}
 \Delta u=\sum_{i,j=0}^{2n-1}\Delta_{ij}u\omega^{i}\wedge\omega^{j},
 \end{equation}
and for $u_{1},\ldots,u_{n}\in C^{2}$
\begin{equation}
\begin{array}{ll}
\Delta u_{1}\wedge\ldots\wedge\Delta u_{n}&=\sum_{i_{1},j_{1},\ldots}\Delta_{i_{1}j_{1}}u_{1}\ldots\Delta_{i_{n}j_{n}}u_{n}\omega^{i_{1}}\wedge\omega^{j_{1}}\ldots\wedge\omega^{i_{n}}\wedge\omega^{j_{n}}\\
&=\sum_{i_{1},j_{1},\ldots}\delta_{01\ldots(2n-1)}^{i_{1},j_{1}\ldots i_{n}j_{n}}\Delta_{i_{1}j_{1}}u_{i_{1}}\ldots\Delta_{i_{n}j_{n}}u_{n}\Omega_{2n},\\
\end{array}
\end{equation}
 where $$\Omega_{2n}=\omega^{0}\wedge \omega^{1}\wedge\ldots\wedge\omega^{2n-1}$$
 and $\delta_{01\ldots(2n-1)}^{i_{1},j_{1}\ldots i_{n}j_{n}}$:= the sign of the permutation from $(i_{1},j_{1},\ldots,i_{n},j_{n})$ to $(0,1,\ldots,2n-1),$
 if
$\{i_{1},j_{1}...,i_{n},j_{n}\}=\{0,1,...,2n-1\};$ otherwise, $\delta_{01..(2n-1)}^{i_{1}j_{1}..i_{n}j_{n}}=0.$ Note that $ \Delta u_{1}\wedge\ldots\wedge\Delta u_{n}$ is symmetric with respect to the permutation of $u_{1},...,u_{n}.$ In particulier, when $u_{1}=...=u_{n}=u,\;  \Delta u_{1}\wedge\ldots\wedge\Delta u_{n}$ coincides with $(\Delta u)^{n}=\wedge^{n}\Delta u.$\\
We denote by $\Delta_{n}( u_{1},..., u_{n})$ the coefficient of the form $ \Delta u_{1}\wedge\ldots\wedge\Delta u_{n},$ ie, $\Delta u_{1}\wedge\ldots\wedge\Delta u_{n}=\Delta_{n}( u_{1},..., u_{n})\Omega_{2n}.$ Then $\Delta_{n}( u_{1},..., u_{n})$ coincides with the mixed Monge-Amp\`{e}re operator $\det( u_{1},..., u_{n})$ while $\Delta_{n}u$ coincides with the quaternionic Monge-Amp\`{e}re operator $\det(u).$ See Appendix A of \cite{W2} for the proof.\\
Following Alesker \cite{A2}, we denote by $\wedge_{\mathbb{R}}^{2k}\mathbb{C}^{2n}$ the subspace of all real elements in $\wedge^{2k}\mathbb{C}^{2n}$ . They are counterparts of $(k,k)-$forms in several complex variables. In the space $\wedge_{\mathbb{R}}^{2k}\mathbb{C}^{2n}$, Wan and Wang in \cite{W2} defined convex cones $\wedge_{\mathbb{R}^{+}}^{2k}\mathbb{C}^{2n}$ and $SP^{2k}\mathbb{C}^{2n}$ of positive and strongly positive elements, respectively. Denoted by $\mathcal{D}^{2k}(\Omega)$ the set of all $C_{0}^{\infty}(\Omega)$ functions valued in $\wedge^{2k}\mathbb{C}^{2n}.$ A form $\eta\in\mathcal{D}^{2k}(\Omega)$ is called  positive (respectively, strongly positive ) if for any $q\in\Omega,$ $\eta(q)$ is a positive (respectively, strongly positive) element. Such forms are the same as the sections of certain line bundle introduced by Alesker \cite{A2} when the manifold is flat. Denote by $PSH$ the class of all quaternionic plurisubharmonic functions. It's proved in \cite[Proposition 3.2]{W2},  that for $u\in PSH\cap C^{2}(\Omega),\;\; \Delta u$ is a closed strongly positive $2$-form.\\
An element of the dual space $(\mathcal{D}^{2n-p}(\Omega))'$ is called a $p$-current. Denoted by $\mathcal{D}_{0}^{p}(\Omega)$ the set of all $C_{0}(\Omega)$ functions valued in $\wedge^{p}\mathbb{C}^{2n}.$ The elements of the dual space $(\mathcal{D}_{0}^{2n-p}(\Omega))'$ are called $p$-currents of order zero. Obviously, the $2n$-currents are just the distributions on $\Omega.$\\
A 2k-current $T$ is said to be positive if we have $T(\eta)\geq0$ for any strongly positive form $\eta\in\mathcal{D}^{2n-2k}(\Omega).$
Although a $2n$-form is not an authentic differential form and we cannot integrate it, we can define
$$\displaystyle{\int}_{\Omega}F:=\displaystyle{\int}_{\Omega}fdV,$$ if we write $F=f\Omega_{2n}\in L^{1}(\Omega,\wedge^{2n}\mathbb{C}^{2n}),$ where $dV$ is the Lebesgue measure.
In particular, if $F$ is positive $2n$-form, then $\displaystyle{\int}_{\Omega}F\geq0.$ For a $2n$-current $F=\mu\Omega_{2n}$ with coefficient to be measure $\mu,$ define
$\displaystyle{\int}_{\Omega}F:=\displaystyle{\int}_{\Omega}\mu.$ Any positive $2k$-current $T$ on $\Omega$ has measure coefficients (cf \cite{W2} for more details). For a positive $2k$-current $T$ and a strongly positive test form $\varphi\in \mathcal{D}^{2n-2k}(\Omega),$ we can write $$T\wedge\varphi=\mu\Omega_{2n}  \;\;\mbox{and}\;\;  T(\varphi)=\displaystyle{\int}_{\Omega}T\wedge\varphi$$ for some Radon measure $\mu.$
Now for the $p$-current $F,$ we define $d_{\alpha}F$ as $(d_{\alpha}F)(\eta):=-F(d_{\alpha}\eta),$ $\alpha=0,1,$ for any test $(2n-p-1)$-form $\eta.$ We say that a current $F$ is closed if $d_{0}F=d_{1}F=0.$ Wan and Wang proved that the current $\Delta u$ is closed positive $2$-current for any $u\in PSH(\Omega)$ (see \cite[Theorem 3.7]{W2}).
\begin{lem}{(Stokes type formula,\cite[Lemma 3.2 ]{W2})}\label{lem1}

Assume that $T$ is a smooth $(2n-1)$-form in $ \Omega$, and
$h$ is a smooth function with $h = 0$ on $\partial \Omega$. Then we have
$\int_{\Omega}hd_{\alpha}T=-\int_{\Omega}d_{\alpha}h\wedge T\;\;\;\;\alpha=0,1.$
\end{lem}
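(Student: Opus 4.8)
The plan is to reduce the identity to the ordinary divergence theorem on $\Omega\subset\R^{4n}$ by means of the Leibniz rule for $d_{\alpha}$. Since $h$ is a $0$-form, formula (\ref{eq3}) gives $d_{\alpha}(hT)=d_{\alpha}h\wedge T+h\,d_{\alpha}T$ for $\alpha=0,1$. Integrating over $\Omega$, it therefore suffices to show that
\[
\int_{\Omega}d_{\alpha}S=0
\]
for every smooth $(2n-1)$-form $S$ on $\overline{\Omega}$ whose coefficients vanish on $\partial\Omega$, and then to apply this with $S=hT$, which indeed vanishes on $\partial\Omega$ because $h$ does (and $T$ is smooth up to the boundary).

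To prove $\int_{\Omega}d_{\alpha}S=0$, write $S=\sum_{|I|=2n-1}s_{I}\,\omega^{I}$ in the fixed basis $\{\omega^{0},\dots,\omega^{2n-1}\}$. By the very definition of $d_{\alpha}$ we have $d_{\alpha}S=\sum_{k,I}\nabla_{k\alpha}s_{I}\,\omega^{k}\wedge\omega^{I}$; the only surviving terms are those with $k\notin I$, and for such $k,I$ one has $\omega^{k}\wedge\omega^{I}=\varepsilon_{k,I}\,\Omega_{2n}$, where $\varepsilon_{k,I}=\pm1$ is the sign of the permutation sending $(k,I)$ to $(0,1,\dots,2n-1)$. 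Hence $\int_{\Omega}d_{\alpha}S=\int_{\Omega}\big(\sum_{k\notin I}\varepsilon_{k,I}\,\nabla_{k\alpha}s_{I}\big)\,dV$. Now each $\nabla_{k\alpha}$ in (\ref{4}) is a constant-coefficient first-order operator, $\nabla_{k\alpha}=\sum_{j}c_{k\alpha}^{j}\,\partial_{x_{j}}$ with $c_{k\alpha}^{j}\in\C$, so for each multi-index $I$ and each $j$ the fundamental theorem of calculus gives $\int_{\Omega}\partial_{x_{j}}s_{I}\,dV=\int_{\partial\Omega}s_{I}\,\nu_{j}\,dS=0$, since $s_{I}$ vanishes on $\partial\Omega$. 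Summing, $\int_{\Omega}d_{\alpha}S=0$.

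Combining the two steps yields $0=\int_{\Omega}d_{\alpha}(hT)=\int_{\Omega}d_{\alpha}h\wedge T+\int_{\Omega}h\,d_{\alpha}T$, which is the asserted formula. There is no substantial obstacle here: the only points requiring care are the orientation bookkeeping for the signs $\varepsilon_{k,I}$ — automatic once one notes that $d_{\alpha}h\wedge T$ and $h\,d_{\alpha}T$ pair the same multi-indices with the same orientation, as forced by the Leibniz rule — and the hypothesis ensuring the boundary integrals vanish, for which it is enough that $h=0$ on $\partial\Omega$ and that $T$ (hence every coefficient $s_{I}$ of $hT$) is smooth up to $\partial\Omega$. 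Alternatively one may avoid invoking (\ref{eq3}) and argue termwise, using $h\,\nabla_{k\alpha}t_{I}+(\nabla_{k\alpha}h)\,t_{I}=\nabla_{k\alpha}(h\,t_{I})$ and integrating the resulting total derivative; this is the same computation.
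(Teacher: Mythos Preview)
The paper does not prove this lemma; it is simply quoted from \cite[Lemma~3.2]{W2}. Your argument is correct and is the standard one: the Leibniz rule $d_{\alpha}(hT)=d_{\alpha}h\wedge T+h\,d_{\alpha}T$ reduces the claim to $\int_{\Omega}d_{\alpha}(hT)=0$, and since each $\nabla_{k\alpha}$ is a constant-coefficient combination of real partial derivatives $\partial_{x_j}$, this follows coefficient by coefficient from the ordinary divergence theorem, the boundary contributions vanishing because $h|_{\partial\Omega}=0$. Two minor remarks: the Leibniz rule (\ref{eq3}) is stated for compactly supported forms, but as a pointwise identity it of course holds for arbitrary smooth $h,T$; and your use of $\int_{\partial\Omega}s_I\,\nu_j\,dS$ tacitly assumes $T$ is smooth up to $\partial\Omega$, which is how the lemma is applied throughout the paper.
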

The theory of Bedford-Taylor \cite{BE} in complex analysis can be generalized to the quaternionic case. Let $u$ be a locally bounded PSH function and let $T$ be a closed positive $2k$-current. Define $$\Delta u\wedge T:=\Delta(uT),$$ i.e., $(\Delta u\wedge T)(\eta):=uT(\Delta\eta)$ for test form $\eta.$ $\Delta u\wedge T$ is  also a closed positive current. Inductively, for $u_{1},\ldots,u_{p}\in PSH\cap L_{loc}^{\infty}(\Omega),$ Wan and Wang in \cite{W2} showed that
$$\Delta u_{1}\wedge\ldots\wedge\Delta u_{p}:=\Delta(u_{1}\Delta u_{2}\wedge\ldots\wedge\Delta u_{p})$$ is closed  positive $2p$-current. In particular, for $u_{1},\ldots,u_{n}\in PSH\cap L_{loc}^{\infty}(\Omega),$ $\Delta u_{1}\wedge\ldots\wedge\Delta u_{n}=\mu\Omega_{2n}$ for a well-defined positive Radon measure $\mu.$\\
For any test $(2n-2p)$-form $\psi$ on $\Omega,$ we have $\displaystyle{\int}_{\Omega}\Delta u_{1}\wedge\ldots\wedge\Delta u_{p}\wedge\psi=\displaystyle{\int}_{\Omega}u_{1}\Delta u_{2}\wedge\ldots\wedge\Delta u_{p}\wedge\Delta\psi,$
where $u_{1},\ldots,u_{p}\in PSH\cap L_{loc}^{\infty}(\Omega).$
Given a bounded plurisubharmonic function $u$ one can define the quaternionic Monge-Amp\`{e}re measure
$$ (\Delta u)^{n}=\Delta u\wedge\Delta u\wedge\ldots\wedge\Delta u.$$
This is a nonnegative Borel measure.
\section{The $m$-subharmonic functions in $\mathbb{H}^{n}$}\label{sec2}
\subsection{Basic properties of elementary symmetric functions}
 Let  $1\leq m\leq n,$ we set  $$S_m(\lambda)=\sum_{1\leq j_1<\ldots< j_m\leq n}\lambda_{j_{1}}\ldots\lambda_{j_{m}},$$
called the symetric  function of $\mathbb{R}^{n}$ of degree $m$, which can be determined by
                 $$(\lambda_{1}+t)\ldots(\lambda_{n}+t)=\sum_{m=0}^{n}S_m(\lambda)t^{n-m}, \;\;\;\; \hbox{with}\;\;\; t\in \mathbb{R}.$$
We denote $\Gamma_{m}$ the closure of the  connected component of $\{\lambda\in \mathbb{R}^{n}: S_m(\lambda)>0\}$ containing $(1,\ldots,1).$

 Let $t\geq 0$, we have
 $$ \Gamma_{m}= \lbrace \lambda \in \mathbb{R}^{n}: S_{m}(\lambda_{1}+t,\ldots,\lambda_{n}+t)\geq 0 \rbrace = \lbrace \lambda \in \mathbb{R}^{n}: \sum_{p=0}^{m}S_{p}(\lambda)t^{n-p}\geq 0 \rbrace = \displaystyle\bigcap_{p=0}^{m}\lbrace S_{p} \geq0 \rbrace . $$

Note that, $\Gamma_{n}\subset \Gamma_{n-1}\subset \ldots\subset \Gamma_{1},$ and by the results in \cite{G}, $\Gamma_{m}$ is convex in $\mathbb{R}^{n}$ and $(S_{m})^{\frac{1}{m}}$ is concave in $\Gamma_{m}$, and by the Maclaurin inequality
$$\left(
    \begin{array}{c}
      n \\
      m \\
    \end{array}
  \right)^{\frac{-1}{m}}(S_{m})^{\frac{1}{m}}\leq\left(
    \begin{array}{c}
      n \\
      p \\
    \end{array}
  \right)^{\frac{-1}{p}}(S_{p})^{\frac{1}{p}}, \;\;\; \forall\; 1\leq p\leq m\leq n.
$$ Remind that a
square quaternionic matrix $A = (a_{ij} )$ is called hyperhermitian if its quaternionic conjugate
$A^{\ast} = A$, or explicitly $a_{ij} = \overline{a_{ji}}$.
Let $\mathcal{H}_{n}$ be the real vector space of  hyperhermitian $(n\times n)$-matrices, then from Theorem \ref{pro0} follow  the eigenvalues $\lambda(A)
=( \lambda_{1},\ldots,\lambda_{n})$ are real for any  $A \in \mathcal{H}_{n}$. We set                                       $\tilde{S}_{m}(A)=S_{m}(\lambda(A))$
and define the cone
$$\tilde{\Gamma}_{m}:=\lbrace A \in \mathcal{H}_{n}: \lambda(A)\in \Gamma_{m} \rbrace= \lbrace A \in \mathcal{H}_{n}: \tilde{S}_{k}(A)\geq0, \forall \;1\leq k \leq m \rbrace.$$
Let $ M : \mathcal{H}_{n}^{m} \longrightarrow \mathbb{R}$ be the polarized form of $ \tilde{S}_{m}$, it is determined by
the following three properties:
 $M$ is linear in every variable, symmetric and
$ M(A, \ldots , A) = \tilde{S}_{m}(A),\;\; A \in \mathcal{H}_{n}.$
The inequality due to Garding \cite[Theorem 5 ]{G} and \cite[corollary 2.2]{LW} asserts that
\begin{equation} \label{8}
 M(A_{1}, \ldots , A_{m})\geq \tilde{S}_{m}(A_{1})^{\frac{1}{m}} \ldots \tilde{S}_{m}(A_{m})^{\frac{1}{m}} \;\;, A_{1}, \ldots , A_{m} \in \tilde{\Gamma}_{m}.
\end{equation}
 For $B\in \mathcal{H}_{n}$ we define
$\mathcal{F}_{m}(B):=\Big(\dfrac{\partial\tilde{S}_{m}}{\partial b_{p\overline{q}}}(B)\Big)\in \mathcal{H}_{n}.$ Then
$tr(A\mathcal{F}_{m}(B))=mM(A,B,\ldots,B),$\\
in particular
\begin{equation}\label{81}
 tr(B\mathcal{F}_{m}(B))=\tilde{S}_{m}(B).
\end{equation}
If $B$ is diagonal then so is $\mathcal{F}_{m}(B).$
 If $\lambda =\lambda(B)$ then
$$\lambda(\mathcal{F}_{m}(B)) = (S_{m-1}(\lambda^{(1)}),\ldots,S_{m-1}(\lambda^{(n)})),$$
where $ \lambda^{(j)} = (\lambda_{1},\ldots,\lambda_{j-1},\lambda_{j+1},\ldots,\lambda_{n}).$
By (\ref{8}) we have
\begin{equation}\label{82}
 tr(A\mathcal{F}_{m}(B))\geq m\tilde{S}_{m}(A)^{\frac{1}{m}}\tilde{S}_{m}(B)^{\frac{m-1}{m}}, \;\;A,B\in \tilde{\Gamma}_{m}.
\end{equation}
 \subsection{The $m$-subharmonic functions}
  Let $$\beta=\sum_{l=0}^{n-1}\omega^{2l}\wedge \omega^{2l+1}\in \wedge_{\mathbb{R}^{+}}^{2}\mathbb{C}^{2n} $$ be the standard k\"{a}hler form,
let $\Omega$ be a bounded domain in $\mathbb{H}^{n}$ and let $u \in C^{2}(\Omega)$ be a real valued function, then by Proposition \ref{pro00} the quaternionic Hessian of $u$ denoted by: $(u_{l\overline{k}}):= \Big(\dfrac{\partial^{2}u}{\partial q_{l}\partial\overline{q}_{k}}\Big)_{n \times n}$ is an hyperhermitian matrix and so  $\lambda(u_{l\overline{k}})$ are
real. The quaternionic Hessian operator is defined by:  $$\tilde{S}_{m}(u_{l\overline{k}}):=S_{m}(\lambda(u_{l\overline{k}})) \mbox{ where } \lambda(u_{l\overline{k}})=(\lambda_{1}(u),\ldots,\lambda_{n}(u)) \mbox{ are the eigenvalues of the Hessian }.$$
 \begin{defi}
 A real valued function $u$ in $C^{2}(\Omega)$ where $\Omega\subset \mathbb{H}^{n}$ is called $m$-subharmonic (we write $u \in \mathcal{SH}_{m}$) if the eigenvalues
 $\lambda(u)=(\lambda_{1}(u),\ldots,\lambda_{n}(u))$ of the quaternionic Hessian $(u_{l\overline{k}})$ belong to $\Gamma_{m}$.
 \end{defi}
 Let $$\widehat{\Gamma}_{m}
  :=\lbrace \alpha \in \wedge_{\mathbb{R}}^{2}\mathbb{C}^{2n} \;\; / \alpha \wedge \beta^{n-1} \geq 0,\alpha^{2} \wedge \beta^{n-2} \geq 0,\ldots ,\alpha^{m} \wedge \beta^{n-m} \geq 0 \rbrace, $$
where $\wedge_{\mathbb{R}}^{2}\mathbb{C}^{2n}$ is the space of all real $2$-forms in quaternion analysis.
\begin{pro}
A fonction $u\in C^{2}(\Omega)$ is $m$-subharmonic if and only if $\Delta u \in \widehat{\Gamma}_{m}$.
\end{pro}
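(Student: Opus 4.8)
The plan is to reduce the claimed equivalence to a pointwise identity identifying, at each $q\in\Omega$, the coefficient of the top-degree form $(\Delta u)^k\wedge\beta^{n-k}$ (for $1\le k\le m$) with a strictly positive multiple of $S_k\big(\lambda(u_{l\overline{k}})\big)$; the proposition is then immediate from $\Gamma_m=\bigcap_{p=0}^m\{S_p\ge 0\}$ recalled above. First I note that, since $\wedge^{2}_{\mathbb{R}}\mathbb{C}^{2n}$-valued top-degree forms live in the one-dimensional space $\wedge^{2n}_{\mathbb{R}}\mathbb{C}^{2n}$ with positive cone $\mathbb{R}_{\ge 0}\,\Omega_{2n}$, a real $2n$-form $f\,\Omega_{2n}$ is positive precisely when $f\ge 0$; hence $\Delta u\in\widehat{\Gamma}_m$ means exactly that the coefficient of $(\Delta u)^k\wedge\beta^{n-k}$ relative to $\Omega_{2n}$ is $\ge 0$ on $\Omega$ for each $k=1,\dots,m$.

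For the pointwise identity I would use the auxiliary function $v:=\tfrac18\|q\|^2$, for which $\Delta v=\beta$ by linearity of $\Delta$ on functions, and whose quaternionic Hessian is $(v_{l\overline{k}})=c\,\mathrm{Id}$ with $c>0$: indeed $\tfrac{\partial^2}{\partial q_l\partial\overline{q}_k}\sum_j|q_j|^2=\delta_{lk}\,\tfrac{\partial^2}{\partial q_l\partial\overline{q}_l}|q_l|^2$, the latter being the same positive number for all $l$. Writing $H:=(u_{l\overline{k}})$, and using that $(\Delta u)^k\wedge\beta^{n-k}=\Delta u\wedge\cdots\wedge\Delta u\wedge\Delta v\wedge\cdots\wedge\Delta v$ is an $n$-fold product together with the identities recalled in Section~\ref{sec1} ($\Delta u_1\wedge\cdots\wedge\Delta u_n=\Delta_n(u_1,\dots,u_n)\,\Omega_{2n}$ and that $\Delta_n$ is the mixed Moore determinant, i.e.\ the polarization of $A\mapsto\det A$), one obtains
$$(\Delta u)^k\wedge\beta^{n-k}=c^{\,n-k}\,\det\big(\underbrace{H,\dots,H}_{k},\underbrace{\mathrm{Id},\dots,\mathrm{Id}}_{n-k}\big)\,\Omega_{2n}.$$
It then remains to prove the linear-algebra identity $\det\big(H^{\times k},\mathrm{Id}^{\times(n-k)}\big)=\binom{n}{k}^{-1}S_k(\lambda(H))$. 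For this I would expand $\det(H+t\,\mathrm{Id})$ in two ways: on one hand it equals $\prod_i\big(\lambda_i(H)+t\big)=\sum_{p=0}^n S_p(\lambda(H))\,t^{n-p}$, since the Moore determinant of a hyperhermitian matrix is the product of its (real) eigenvalues and adding $t\,\mathrm{Id}$ with $t\in\R$ shifts every eigenvalue by $t$; on the other hand, by multilinearity and symmetry of the polarization, it equals $\sum_{p=0}^n\binom{n}{p}\,t^{n-p}\,\det\big(H^{\times p},\mathrm{Id}^{\times(n-p)}\big)$. Comparing coefficients of $t^{n-k}$ gives the identity, so the coefficient of $(\Delta u)^k\wedge\beta^{n-k}$ equals $c^{\,n-k}\binom{n}{k}^{-1}S_k(\lambda(u_{l\overline{k}}))$, a positive multiple of $S_k(\lambda(u_{l\overline{k}}))$.

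Combining the two steps yields the chain of equivalences: $\Delta u\in\widehat{\Gamma}_m$ $\iff$ $S_k(\lambda(u_{l\overline{k}}))\ge 0$ on $\Omega$ for $k=1,\dots,m$ $\iff$ $\lambda(u_{l\overline{k}})\in\Gamma_m$ on $\Omega$ $\iff$ $u\in\mathcal{SH}_m$. The only genuinely non-formal point I expect is the polarization identity for the Moore determinant in the second paragraph; it is proved exactly as in the commutative case, but deserves a few careful lines. (One could instead diagonalize $(u_{l\overline{k}})$ at each point by a quaternionic unitary change of coordinates, which fixes $\beta$ and $\Omega_{2n}$, and then compute $\big(\sum_l\lambda_l\,\omega^{2l}\wedge\omega^{2l+1}\big)^k\wedge\beta^{n-k}=k!\,(n-k)!\,S_k(\lambda)\,\Omega_{2n}$ directly; I prefer the route above, since that alternative also forces one to check covariance of $\Delta$ under such a coordinate change.)
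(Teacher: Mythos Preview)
Your argument is correct. The key identity you establish, that the coefficient of $(\Delta u)^k\wedge\beta^{n-k}$ with respect to $\Omega_{2n}$ is a strictly positive multiple of $S_k\big(\lambda(u_{l\overline{k}})\big)$, is exactly what the paper proves as well (with the explicit constant $\tfrac{k!(n-k)!}{2^k}$), and from there both proofs conclude by invoking $\Gamma_m=\bigcap_{p\le m}\{S_p\ge 0\}$.

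The route, however, is genuinely different. The paper takes precisely the alternative you mention parenthetically at the end: it applies a quaternionic unitary change of coordinates $A\in U_{\mathbb{H}}(n)$ to diagonalize the hyperhermitian Hessian $(u_{l\overline{k}})$ pointwise, checks (via results quoted from \cite{W2,W3}) that $\Delta$ transforms covariantly and that $\beta$ and $\Omega_{2n}$ are preserved, and then computes $(\Delta u)^m\wedge\beta^{n-m}$ directly in the diagonalized frame. Your approach avoids this change of coordinates entirely by exploiting the identification of $\Delta_n(u_1,\dots,u_n)$ with the mixed Moore determinant of the Hessians (already recorded in Section~\ref{sec1}), together with the characteristic-polynomial expansion $\det(H+tI)=\prod_i(\lambda_i+t)$ matched against the polarization expansion. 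Your route is cleaner in that it bypasses the covariance check for $\Delta$ under $U_{\mathbb{H}}(n)$, at the cost of relying on the black-box identity $\Delta_n=\det(\,\cdot\,,\dots,\cdot\,)$ from \cite{W2}; the paper's route is more hands-on but needs those transformation formulas. Either way the only nontrivial input is a result from \cite{W2}, so the two arguments are of comparable depth. One small remark: the precise normalization of the mixed Moore determinant versus $\Delta_n$ introduces a positive constant you do not track (and need not, since you only claim ``a positive multiple''), but if you wanted the explicit constant you would have to be careful there, since the paper's own conventions give $\Delta_n(u,\dots,u)=\tfrac{n!}{2^n}\det\big((u_{l\overline{k}})\big)$ rather than $\det\big((u_{l\overline{k}})\big)$ on the nose.
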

\begin{proof}
Let $u\in C^{2}(\Omega)$, by (\ref{4}) and (\ref{5}) we get
$$\begin{array}{ll}
 \dfrac{\partial^{2} u}{\partial q_{l}\partial \overline{q }_{k}}&=(\dfrac{\partial u }{\partial x_{4l}}+\mathrm{i}\dfrac{\partial u}{\partial x_{4l+1}}+\mathrm{j}\dfrac{\partial u}{\partial x_{4l+2}}+\mathrm{k}\dfrac{\partial u}{\partial x_{4l+3}}). (\dfrac{\partial u}{\partial x_{4k}}-\mathrm{i}\dfrac{\partial u}{\partial x_{4k+1}}-\mathrm{j}\dfrac{\partial u}{\partial x_{4k+2}}-\mathrm{k}\dfrac{\partial u}{\partial x_{4k+3}})\\
&=(\bigtriangledown_{(2l)0}+\mathrm{j}\bigtriangledown_{(2l+1)0}).(\bigtriangledown_{(2k+1)1}-\mathrm{j}\bigtriangledown_{(2k+1)0})\\
&= 2(\Delta_{(2l)(2k+1)}u+\mathrm{j}\Delta_{(2l+1)(2k+1)}u).
\end{array}$$
 For any $A\in Gl_{\mathbb{H}}(n)$  such that $A.\tilde{q}=q$ for $q,\tilde{q}$ in $\mathbb{H}^{n}$ and $\tilde{u}(\tilde{q})=u(A.\tilde{q})=u(q)$. By Corollary 3.1  in \cite{W3}
  we have
\begin{equation}\label{0000}
(\tilde{u}_{l\overline{k}}) =\Big(\dfrac{\partial^{2}\tilde{u}(\tilde{q})}{\partial\tilde{q}_{l}\partial\overline{\tilde{q}}_{k}}
\Big)=\overline{A}^{t}.\Big(\dfrac{\partial^{2}u(q)}{\partial q_{l}\partial \overline{q}_{k}}\Big).A= \overline{A}^{t}.(u_{\overline{l}k}).A
 \end{equation}
 We can choose  a suitable $A$ in $U_{\mathbb{H}}(n)$ such that the right hand side of (\ref{0000}) is a diagonal real  matrix. Hence
  $\tilde{\Delta}_{(2l)(2k+1)}\tilde{u}(\tilde{q})=0$ for $k \neq l$, $\tilde{\Delta}_{(2l+1)(2k+1)}\tilde{u}(\tilde{q})=0$ for all $k,l$ and $\tilde{\Delta}_{(2l)(2k)}\tilde{u}(\tilde{q})=\overline{\tilde{\Delta}_{(2l+1)(2k+1)}\tilde{u}(\tilde{q})}=0$. Therfore
   $\tilde{\Delta}\tilde{u}(\tilde{q})=\displaystyle{\sum _{l=0}^{n-1}\tilde{\Delta}_{(2l)(2l+1)}\tilde{u}(\tilde{q})\tilde{w}^{2l}\wedge \tilde{w}^{2l+1}}$
where $\lbrace \tilde{w}^{0},\cdots,\tilde{w}^{2n-1}\rbrace $ with  $\tilde{w}^{j}=A.w^{j}.$
Since  $(\tilde{u}_{l\overline{k}})$ and $(u_{l\overline{k}})$ are semblable matrices then they have same spectrum, namely
$\lbrace \lambda(u_{l\overline{k}})\rbrace=\lbrace 2\tilde{\Delta}_{(2l)(2l+1)}\tilde{u}(\tilde{q})\rbrace=\lbrace \lambda(\tilde{\Delta}\tilde{u})\rbrace$ for any choice of $A.$
Let $1\leq m\leq n$, from Corollary 2.2 and (2.10) in \cite{W2} follow
 $$\begin{array}{ll}
  ( \Delta u(q))^{m} \wedge \beta^{n-m} =(\Delta u(A.\tilde{q}))^{m}\wedge \beta^{n-m} &=(\tilde{\Delta}\tilde{u}(\tilde{q}))^{m} \wedge A.\beta^{n-m}\\
 &=(\tilde{\Delta}\tilde{u}(\tilde{q}))^{m} \wedge A.\Big(\sum_{l=0}^{n-1}\omega^{2l}\wedge \omega^{2l+1}\Big)^{n-m} \\
 &=(\tilde{\Delta}\tilde{u}(\tilde{q}))^{m} \wedge \Big(\sum_{l=0}^{n-1}A\omega^{2l}\wedge A\omega^{2l+1}\Big)^{n-m} \\
 &=(\tilde{\Delta}\tilde{u}(\tilde{q}))^{m} \wedge \Big(\sum_{l=0}^{n-1}\tilde{\omega}^{2l}\wedge  \tilde{\omega}^{2l+1})^{n-m}  \\
&= m!(n-m)!\tilde{S}_{m}(\tilde{\Delta}_{(2l)(2l+1)}\tilde{u} (\tilde{q}))\Omega_{2n}  \\
&=\dfrac{m!(n-m)!}{2^{m}}\tilde{S}_{m}((u_{l\overline{k}}))\Omega_{2n}.
\end{array}$$
So the result is proven.
\end{proof}
\begin{pro}\label{11}
For $\alpha_{1},\ldots,\alpha_{p}\in \widehat{\Gamma}_{m}\;,p\leq m$ we have $\alpha_{1}\wedge \ldots \wedge\alpha_{p}\wedge \beta^{n-m}\geq 0.$
\end{pro}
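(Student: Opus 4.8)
The plan is to reduce this pointwise statement to a matrix inequality and to conclude with the Garding inequality (\ref{8}). Fix $q\in\mathbb{H}^{n}$ and work with constant-coefficient forms. Using the identification of $\wedge_{\mathbb{R}}^{2}\mathbb{C}^{2n}$ with $\mathcal{H}_{n}$ that underlies the preceding Proposition — if $\alpha=\Delta u$ with $u$ a real quadratic, the associated matrix is $(u_{l\overline{k}})$, which is hyperhermitian by Proposition \ref{pro00} — write $A_{i}\in\mathcal{H}_{n}$ for the matrix associated to $\alpha_{i}$. The computation in the proof of the preceding Proposition shows that $\alpha_{i}^{k}\wedge\beta^{n-k}$ equals a strictly positive constant times $\tilde{S}_{k}(A_{i})\,\Omega_{2n}$ for each $k$, so $\alpha_{i}\in\widehat{\Gamma}_{m}$ is equivalent to $\tilde{S}_{k}(A_{i})\geq0$ for $1\leq k\leq m$, i.e.\ to $A_{i}\in\tilde{\Gamma}_{m}$. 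I would also record two facts for later use: $\beta\in\widehat{\Gamma}_{m}$, since $\beta^{k}\wedge\beta^{n-k}=\beta^{n}=n!\,\Omega_{2n}\geq0$; and, since a wedge of strongly positive forms is again strongly positive, for any strongly positive $2$-form $\gamma$ each $\gamma^{k}\wedge\beta^{n-k}$ ($k\leq m$) is strongly positive, in particular $\geq0$, so every strongly positive $2$-form lies in $\widehat{\Gamma}_{m}$.

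I would first settle the top-degree case $p=m$. Writing $\beta=\Delta\big(\frac{1}{8}\parallel q\parallel^{2}\big)$, the $2n$-form $\alpha_{1}\wedge\cdots\wedge\alpha_{m}\wedge\beta^{n-m}$ has coefficient against $\Omega_{2n}$ equal to a specialization of the mixed Monge-Amp\`ere operator $\det(u_{1},\ldots,u_{n})$, hence multilinear and symmetric in $A_{1},\ldots,A_{m}$ (the remaining $n-m$ entries being occupied by the fixed matrix of $\beta$). On the diagonal $A_{1}=\cdots=A_{m}=A$ this coefficient equals $\frac{m!(n-m)!}{2^{m}}\tilde{S}_{m}(A)$, by the identity $(\Delta u)^{m}\wedge\beta^{n-m}=\frac{m!(n-m)!}{2^{m}}\tilde{S}_{m}((u_{l\overline{k}}))\,\Omega_{2n}$ obtained in that proof. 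Since a multilinear symmetric form is determined by its restriction to the diagonal, uniqueness of the polarization of $\tilde{S}_{m}$ gives $\alpha_{1}\wedge\cdots\wedge\alpha_{m}\wedge\beta^{n-m}=\frac{m!(n-m)!}{2^{m}}M(A_{1},\ldots,A_{m})\,\Omega_{2n}$. As $A_{1},\ldots,A_{m}\in\tilde{\Gamma}_{m}$, the Garding inequality (\ref{8}) yields $M(A_{1},\ldots,A_{m})\geq\tilde{S}_{m}(A_{1})^{1/m}\cdots\tilde{S}_{m}(A_{m})^{1/m}\geq0$, so $\alpha_{1}\wedge\cdots\wedge\alpha_{m}\wedge\beta^{n-m}\geq0$.

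For $p<m$ I would pass to the test-form characterization of positivity: $\alpha_{1}\wedge\cdots\wedge\alpha_{p}\wedge\beta^{n-m}$ is $\geq0$ if and only if its wedge with every strongly positive test $2(m-p)$-form is a nonnegative $2n$-form, and by the definition of strong positivity such a test form is a nonnegative combination of wedges $\gamma_{1}\wedge\cdots\wedge\gamma_{m-p}$ of strongly positive $2$-forms. By the first paragraph each $\gamma_{j}\in\widehat{\Gamma}_{m}$, so applying the already-proved case $p=m$ to the $m$ forms $\alpha_{1},\ldots,\alpha_{p},\gamma_{1},\ldots,\gamma_{m-p}$ gives $\alpha_{1}\wedge\cdots\wedge\alpha_{p}\wedge\gamma_{1}\wedge\cdots\wedge\gamma_{m-p}\wedge\beta^{n-m}\geq0$; forming the nonnegative combination completes the proof. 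The points I expect to need the most care are the structural facts used in this last step — that strongly positive $2k$-forms decompose into nonnegative combinations of wedges of strongly positive $2$-forms, and that wedging strongly positive forms with powers of $\beta$ preserves strong positivity — together with the bookkeeping in the case $p=m$ that pins the coefficient of $\alpha_{1}\wedge\cdots\wedge\alpha_{m}\wedge\beta^{n-m}$ down to $\frac{m!(n-m)!}{2^{m}}M(A_{1},\ldots,A_{m})$; once these are in place, the Garding inequality does the essential work.
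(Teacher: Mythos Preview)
Your proposal is correct and follows essentially the same route as the paper: reduce to the case $p=m$ by testing against elementary strongly positive $2$-forms (which lie in $\widehat{\Gamma}_{m}$), and then invoke the Garding inequality (\ref{8}) via the identification of the coefficient with the polarized form $M(A_{1},\ldots,A_{m})$. The paper uses the specific simple forms $\eta_{j}^{\ast}\tilde{\omega}^{0}\wedge\eta_{j}^{\ast}\tilde{\omega}^{1}$ and the observation that their squares vanish (hence they belong to $\widehat{\Gamma}_{n}\subset\widehat{\Gamma}_{m}$), whereas you phrase the same step in terms of strong positivity being preserved under wedge; the underlying argument is identical.
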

 \begin{proof}
 As in the proof of Proposition 2.1 in \cite{B}.\\
 We need to show that $\eta_{1}^{\ast}\tilde{\omega}^{0}\wedge \eta_{1}^{\ast}\tilde{\omega}^{1}\wedge  \ldots \wedge\eta_{m-p}^{\ast}\tilde{\omega}^{0}\wedge \eta_{m-p}^{\ast}\tilde{\omega}^{1}\wedge\alpha_{1}\wedge \ldots \wedge\alpha_{p}\wedge \beta^{n-m}\geq 0$
 for any $\eta_{j}^{\ast}\tilde{\omega}^{0}\wedge \eta_{j}^{\ast}\tilde{\omega}^{1}$ with $j=1,\ldots,m-p$ where $\lbrace \tilde{\omega}^{0},\tilde{\omega}^{1}\rbrace$ is a basis of $\mathbb{C}^{2}$ and $\eta_{j}^{\ast}:\mathbb{C}^{2}\rightarrow  \mathbb{C}^{2n} $ is the induced $\mathbb{C}$-linear pulling back transformation
of $ \eta_{j}.\omega,$  (cf. pages 12, 13 in \cite{W2} for more details.)
But since $\eta_{j}^{\ast}\tilde{\omega}^{0}\wedge \eta_{j}^{\ast}\tilde{\omega}^{1}\in
\widehat{\Gamma}_{n}\subset \widehat{\Gamma}_{m}$ (this is because $(\eta_{j}^{\ast}\tilde{\omega}^{0}\wedge \eta_{j}^{\ast}\tilde{\omega}^{1})^{2}=0$ ), we
may assume that $p=m$. Then the proposition follows from the Garding
inequality (\ref{8}).
 \end{proof}
  \begin{defi}
  A $(2n-2k)$-current $T$ $(k\leq m)$ is called $m$-positive if for $\alpha_{1},\ldots,\alpha_{k}\in \widehat{\Gamma}_{m}$, we have
\begin{equation}
\alpha_{1}\wedge \ldots \wedge\alpha_{k}\wedge T \geq 0. \label{2}
\end{equation}
 \end{defi}
  \begin{defi}
 A  function $u$  is called $m$-subharmonic  if it is subharmonic and, for any
 $\alpha_{1},\cdots,\alpha_{m-1}$ in $\widehat{\Gamma}_{m}$ the current $ \Delta u\wedge\alpha_{1}\wedge\cdots\wedge \alpha_{m-1}\wedge \beta ^{n-m}$ is $m$-positive.
 \end{defi}
The following basic properties of $m$-subharmonic functions follow
immediately from Proposition \ref{11}.
 \begin{pro}\cite[Proposition 4.3]{LW}\label{pro4} 
 \begin{enumerate}
 \item[1)] If $u,v \in \mathcal{SH}_{m}(\Omega)$, then $u+v  \in \mathcal{SH}_{m}(\Omega).$
 \item[2)] If $u  \in \mathcal{SH}_{m}(\Omega)$ and $ \gamma:\mathbb{R}\longrightarrow \mathbb{R}$ is a convex increasing function, then $\gamma \circ u  \in \mathcal{SH}_{m}(\Omega).$
 \item[3)] If $u  \in \mathcal{SH}_{m}(\Omega)$, then the standard regularizations $u*\rho_{\epsilon} \in \mathcal{SH}_{m}(\Omega_{\epsilon})$, where $\Omega_{\epsilon}:=\lbrace z/dis(z,\partial\Omega)> \epsilon\rbrace \;\;0< \epsilon \ll 1$.
 \item[4)] If $\lbrace u_{j}\rbrace \subset  \mathcal{SH}_{m}(\Omega)$ is locally uniformaly bounded from above, then $(\sup_{j}u_{j})^{*}  \in \mathcal{SH}_{m}(\Omega)$, where $v^{*}$ denotes the regularization of $v$.
 \item[5)] $\mathcal{PSH}(\Omega)= \mathcal{SH}_{n}(\Omega)\subset \cdots \subset \mathcal{SH}_{1}(\Omega)= \mathcal{SH}(\Omega).$
 \item[6)] Let $ \omega$ be a non-empty proper open subset of $ \Omega,\; u  \in \mathcal{SH}_{m}(\Omega),\;v  \in \mathcal{SH}_{m}(\omega)$ and $\displaystyle{\limsup_{q\longrightarrow \varsigma}}v(q)\leq u(\varsigma)$ for each $\varsigma \in  \partial\omega \cap \Omega$, then
 \begin{equation}
  W:=  \left \{
 \begin{array}{ll}
 \max\lbrace u,v\rbrace,\;\;\;in\;\; \omega \\
 u, \;\;\;\;\;\;\;\;\;in\;\; \Omega\setminus\omega \\
\end{array}
  \in \mathcal{SH}_{m}(\Omega).\right.
   \end{equation}
\end{enumerate}
 \end{pro}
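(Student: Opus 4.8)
The plan is to prove all six assertions by reducing each one to the corresponding statement about $m$-positive currents, for which Proposition~\ref{11} provides the basic building block. Throughout, I use the characterization of $m$-subharmonicity in terms of $\Delta u$ lying (in the current sense) in $\widehat{\Gamma}_m$, i.e.\ that for $\alpha_1,\dots,\alpha_{m-1}\in\widehat{\Gamma}_m$ the current $\Delta u\wedge\alpha_1\wedge\cdots\wedge\alpha_{m-1}\wedge\beta^{n-m}$ is $m$-positive, together with subharmonicity.

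\emph{Proofs of 1), 2), 3).} For 1), since $\Delta$ is linear we have $\Delta(u+v)=\Delta u+\Delta v$, and the sum of two $m$-positive currents is $m$-positive (the defining inequality \eqref{2} is additive); subharmonicity is also additive, so $u+v\in\mathcal{SH}_m(\Omega)$. For 2), one first checks the $C^2$ case by a pointwise computation: if $\lambda(u)\in\Gamma_m$ then the eigenvalues of the Hessian of $\gamma\circ u$ are $\gamma'(u)\lambda(u)+\gamma''(u)(\text{rank-one nonnegative term})$; using convexity of $\Gamma_m$ (from \cite{G}, quoted in the excerpt) and the fact that $\Gamma_m$ contains $(1,\dots,1)$ and is a convex cone stable under adding nonnegative multiples of coordinate vectors, one gets $\lambda(\gamma\circ u)\in\Gamma_m$. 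The general case follows by approximation via 3) and 4): approximate $u$ from above by smooth $m$-subharmonic $u_j=u*\rho_{\varepsilon_j}$, apply the $C^2$ case, and pass to the decreasing limit, using that a decreasing limit of $m$-subharmonic functions is $m$-subharmonic (which itself follows from closedness of the $m$-positive cone under weak limits of currents). For 3), convolution commutes with $\Delta$ and with wedging against the constant-coefficient forms $\alpha_j,\beta$, so $\Delta(u*\rho_\varepsilon)\wedge\alpha_1\wedge\cdots\wedge\alpha_{m-1}\wedge\beta^{n-m}=\big(\Delta u\wedge\alpha_1\wedge\cdots\wedge\alpha_{m-1}\wedge\beta^{n-m}\big)*\rho_\varepsilon$, and the regularization of an $m$-positive current (a positive measure against each fixed strongly positive test form) is again $m$-positive; subharmonicity is preserved by regularization classically.

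\emph{Proofs of 4), 5), 6).} For 4), set $u=(\sup_j u_j)^*$. Subharmonicity of $u$ is the classical Brelot–Cartan result. For $m$-positivity, fix $\alpha_1,\dots,\alpha_{m-1}\in\widehat{\Gamma}_m$; for the upper envelope of a locally bounded family of $m$-subharmonic functions the current $\Delta u\wedge\alpha_1\wedge\cdots\wedge\alpha_{m-1}\wedge\beta^{n-m}$ is $m$-positive by the same monotone/weak-limit argument as in the Bedford–Taylor theory in \cite{W2} (one reduces to finite maxima, then to decreasing regularized limits). For 5), the inclusions are immediate from $\Gamma_n\subset\Gamma_{n-1}\subset\cdots\subset\Gamma_1$ noted after the definition of $\Gamma_m$, together with $\widehat{\Gamma}_n\subset\widehat{\Gamma}_{n-1}\subset\cdots\subset\widehat{\Gamma}_1$: an $(m{-}1)$-tuple in $\widehat{\Gamma}_m$ is an $(m{-}1)$-tuple in $\widehat{\Gamma}_1$, etc.; the identifications with $\mathcal{PSH}$ and $\mathcal{SH}$ at the endpoints come from $\widehat{\Gamma}_n$ being exactly the strongly positive $2$-forms (so $\mathcal{SH}_n=\mathcal{PSH}$ by \cite[Proposition~3.2]{W2}) and $\widehat{\Gamma}_1=\{\alpha:\alpha\wedge\beta^{n-1}\ge0\}$, for which $\Delta u\wedge\beta^{n-1}\ge0$ is the quaternionic Laplace equation, i.e.\ ordinary subharmonicity. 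For 6), that $W$ is subharmonic on $\Omega$ is the standard gluing lemma for subharmonic functions under the hypothesis $\limsup_{q\to\varsigma}v(q)\le u(\varsigma)$ on $\partial\omega\cap\Omega$. For the $m$-positivity: on $\omega$, locally $W=\max\{u,v\}$ is a locally uniform limit of $\max$ over finitely many translates and hence, by 4) applied to the pair $\{u,v\}$, lies in $\mathcal{SH}_m(\omega)$; on $\Omega\setminus\overline\omega$, $W=u\in\mathcal{SH}_m$; since the two descriptions agree near $\partial\omega\cap\Omega$ (there $W=u$ because $v$ does not exceed $u$ in the limit), the $m$-positive current $\Delta W\wedge\alpha_1\wedge\cdots\wedge\alpha_{m-1}\wedge\beta^{n-m}$ is locally $m$-positive everywhere on $\Omega$, hence $m$-positive.

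\emph{Main obstacle.} The routine parts are the linear-algebra facts about $\Gamma_m$ (convexity, the cone property, endpoint identifications) and the classical potential-theoretic gluing/envelope statements about subharmonicity. The genuinely delicate point, as in the complex case, is the stability of $m$-positivity of the wedge-products of currents under the limiting operations used in 2), 4) and 6): one must know that $\Delta u\wedge\alpha_1\wedge\cdots\wedge\alpha_{m-1}\wedge\beta^{n-m}$ depends continuously (in the weak topology of currents) on $u$ along monotone sequences of locally bounded $m$-subharmonic functions, and that the $m$-positive cone is weakly closed. Both are available from the Bedford–Taylor-type machinery developed by Wan and Wang in \cite{W2} (quoted in Section~\ref{sec1}) combined with Proposition~\ref{11}, so the proof consists mainly in assembling these ingredients; I do not expect any new difficulty beyond bookkeeping.
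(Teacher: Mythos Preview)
Your proposal is correct and follows essentially the same approach as the paper, which does not give a detailed proof but simply notes that the six assertions ``follow immediately from Proposition~\ref{11}'' and cites \cite[Proposition~4.3]{LW}. You have supplied precisely the kind of reductions to Proposition~\ref{11} and to standard subharmonic/Bedford--Taylor facts that are implicit in that remark; the only minor looseness is in the phrasing of the eigenvalue argument in 2) and the chain of inclusions in 5), but both are easily repaired by working at the level of the cones $\tilde{\Gamma}_m$ (using $\tilde{\Gamma}_m+\tilde{\Gamma}_n\subset\tilde{\Gamma}_m$) and passing to general $u$ via your regularization step 3).
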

Now for a continuous $m$-subharmonic functions, we can inductively define a
closed nonnegative current
\begin{equation}\label{15}
\Delta u_{1}\wedge \ldots \wedge \Delta u_{p}\wedge \beta^{n-m}:=\Delta(u_{1} \Delta u_{2} \wedge \ldots \wedge \Delta u_{p} \wedge \beta^{n-m})
\end{equation}
$$where \;\;\;\;u_{1},\ldots ,u_{p} \in \mathcal{SH}_{m}(\Omega)\cap C(\bar{\Omega}) \;\;and\;\;p \leq m .$$
For any test $(2m-2p)$-form $\psi$ on $ \Omega$, (\ref{15}) can be rewritten as
\begin{equation}\label{16}
\displaystyle{\int_{\Omega}\Delta u_{1}\wedge \ldots\wedge\Delta u_{p}\wedge\psi \wedge\beta^{n-m}=\int_{\Omega}u_{1}\Delta u_{2}\wedge \ldots\wedge\Delta u_{p}\wedge\Delta\psi \wedge\beta^{n-m}}.
\end{equation}
We can also define a nonnegative current
 \begin{equation}\label{17}
 d_{0}( u_{0}-u_{1}) \wedge d_{1}(u_{0}-u_{1}) \wedge \Delta u_{2} \wedge  \ldots
\wedge \Delta  u_{p} \wedge \beta^{n-m}
\end{equation}
$$where \;\;\;u_{0},u_{1},\ldots ,u_{p} \in \mathcal{SH}_{m}(\Omega)\cap C(\bar{\Omega})\;\;and\;\;p \leq m .$$
\begin{pro}\label{pro11}
The operators (\ref{15}) and (\ref{17}) are continuous for
locally uniformly convergent sequences in $ \mathcal{SH}_{m}(\Omega)\cap C(\bar{\Omega}).$
\end{pro}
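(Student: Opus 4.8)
The plan is to follow the classical Bedford--Taylor induction scheme, adapted to the $m$-subharmonic, $\beta$-twisted setting, exactly as Wan and Wang did for the full quaternionic Monge--Amp\`ere operator in \cite{W2}. The statement to be proved is the continuity of the wedge operators \eqref{15} and \eqref{17} under locally uniform convergence of continuous $m$-subharmonic functions. I would induct on $p$. For the base case $p=1$, the operator $\Delta u_1\wedge\beta^{n-m}$ is defined distributionally, so if $u_1^{(j)}\to u_1$ locally uniformly then $u_1^{(j)}\Delta\psi\wedge\beta^{n-m}\to u_1\Delta\psi\wedge\beta^{n-m}$ for every fixed test form $\psi$, which is exactly weak convergence of the currents; here one uses that $\Delta$ acts on $\psi$ by a fixed second-order differential operator with constant coefficients, so no regularity of the $u_1^{(j)}$ is needed.

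For the inductive step I would assume the result for $p-1$ and prove it for $p$. Write $T^{(j)}:=\Delta u_2^{(j)}\wedge\cdots\wedge\Delta u_p^{(j)}\wedge\beta^{n-m}$; by the inductive hypothesis $T^{(j)}\to T$ weakly, and each $T^{(j)}$ is a closed $m$-positive current of order zero, hence has locally uniformly bounded mass on compact subsets (this mass bound follows from $m$-positivity together with a Chern--Levine--Nirenberg type estimate, which is proved in the quaternionic context in \cite{W2}; I would cite it rather than reprove it). Then for a test form $\psi$,
\begin{equation*}
\int_\Omega \Delta u_1^{(j)}\wedge T^{(j)}\wedge\psi=\int_\Omega u_1^{(j)}\,T^{(j)}\wedge\Delta\psi.
\end{equation*}
The right-hand side is handled by splitting: $\int u_1^{(j)}T^{(j)}\wedge\Delta\psi-\int u_1 T\wedge\Delta\psi=\int (u_1^{(j)}-u_1)T^{(j)}\wedge\Delta\psi+\int u_1\,(T^{(j)}-T)\wedge\Delta\psi$. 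The first term is bounded by $\|u_1^{(j)}-u_1\|_{L^\infty(\mathrm{supp}\,\psi)}$ times the uniformly bounded mass of $T^{(j)}$, hence $\to 0$; the second term $\to 0$ because $u_1$ is continuous and can be uniformly approximated on $\mathrm{supp}\,\psi$ by smooth functions, reducing to the already-known weak convergence $T^{(j)}\to T$ tested against smooth forms. This gives continuity of \eqref{15}.

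For \eqref{17} I would use the algebraic identity $2\,d_0 v\wedge d_1 v=\Delta(v^2)-2v\Delta v$, valid for $v\in C^2$ and extending by continuity (this is the quaternionic analogue of $2\partial v\wedge\bar\partial v=i\partial\bar\partial v^2-2iv\,\partial\bar\partial v$, and follows from the Leibniz rule \eqref{eq3}); applying it with $v=u_0-u_1$ rewrites \eqref{17} as a combination of terms of the form \eqref{15} (with integrands $v^2$, $v$, and the product $u_0 u_1$ handled via $u_0u_1=\frac14[(u_0+u_1)^2-(u_0-u_1)^2]$ and the fact, from Proposition \ref{pro4}, that sums and such quadratic combinations stay $m$-subharmonic up to harmless additive terms), so its continuity reduces to the case already treated. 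The main obstacle is the uniform mass bound on $T^{(j)}$ on compacta: without it, the product $(u_1^{(j)}-u_1)\,T^{(j)}$ cannot be controlled. I expect this to be quotable from the quaternionic Chern--Levine--Nirenberg inequality in \cite{W2}, but if a version twisted by $\beta^{n-m}$ and restricted to $\widehat\Gamma_m$ is needed, one proves it by the usual induction, integrating by parts via Lemma \ref{lem1} and using $m$-positivity (Proposition \ref{11}) to keep all intermediate integrals nonnegative.
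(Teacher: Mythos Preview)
Your inductive scheme is exactly the one the paper uses, only stated far more tersely there: the paper simply observes that it suffices to show continuity of $(u_1,\ldots,u_p)\mapsto u_1\,\Delta u_2\wedge\cdots\wedge\Delta u_p\wedge\beta^{n-m}$, and that this follows by induction because an $m$-positive current has Radon-measure coefficients and the convergence is uniform. Note that the paper does not need to invoke the Chern--Levine--Nirenberg inequality separately: once the lower-order currents $T^{(j)}$ converge weakly and are positive, the uniform local mass bound is automatic (test against a nonnegative bump equal to $1$ on the given compact set).

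There is, however, a small but genuine slip in your treatment of \eqref{17}. The algebraic identity $2\,d_0 v\wedge d_1 v=\Delta(v^2)-2v\,\Delta v$ is correct, but your claim that the resulting terms are ``of the form \eqref{15}'' because ``quadratic combinations stay $m$-subharmonic up to harmless additive terms'' is false: for $v=u_0-u_1$ neither $v^2$ nor $u_0u_1$ nor $(u_0\pm u_1)^2$ is $m$-subharmonic in general, and no additive correction fixes this (Proposition \ref{pro4} requires the outer function to be convex \emph{and increasing}). The good news is that you do not need $m$-subharmonicity at that point. The current $\Delta(v^2)\wedge T$ is simply $\Delta(v^2\,T)$ in the sense of distributions, well defined for any continuous $v^2$ once $T=\Delta u_2\wedge\cdots\wedge\Delta u_p\wedge\beta^{n-m}$ has measure coefficients; its continuity under locally uniform convergence follows from the very argument you gave for \eqref{15}, using only that $v_j^2\to v^2$ uniformly and that the $T^{(j)}$ have uniformly bounded local mass. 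Likewise $v\,\Delta v\wedge T=v\,\Delta u_0\wedge T-v\,\Delta u_1\wedge T$ splits into a difference of two genuine instances of \eqref{15} multiplied by the continuous function $v$. So drop the polarization and the appeal to $m$-subharmonicity of quadratic expressions, and the argument goes through.
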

\begin{proof}
 It is enough to prove the continuity of the operator
 $$(\mathcal{SH}_{m}\cap C)^{p}\ni (u_{1},\ldots ,u_{p})\mapsto u_{1}\Delta u_{2}\wedge \ldots \wedge \Delta u_{p}\wedge \beta^{n-m}.$$
  This follows inductively from the fact that the coefficients of a positive
current are Radon measures and since the convergence is uniform.
\end{proof}
\begin{lem}[Chern–Levine–Nirenberg-type estimate]\label{lem2}
 Let $\Omega$ be a domain in $\mathbb{H}^{n}$.
Let $K,L$  be compact subsets of $\Omega$ such that $L$ is contained in the interior of $K$, then there exists a constant
$C$ depending only on $K,L$ such that for any $u_{1},\ldots,u_{k} \in \mathcal{SH}_{m}(\Omega)\cap C^{2}(\Omega)\;\;(k \leq m)$ and for any  $m$ positive closed $(2n-2p)$-current $T$ with $(k \leq p)$, one has
$$\Vert \Delta u_{1}\wedge \ldots \wedge \Delta u_{k}\wedge T \Vert_{L} \leq C\prod_{i=1}^{k}\Vert u_{i} \Vert_{L^{\infty}(K)}\Vert T \Vert_{K},$$
where $\Vert T \Vert_{K}:=\int_{K}T\wedge \beta_{n}^{p}$.
Note that this estimate also holds for any $u_{1},\ldots,u_{p} \in \mathcal{SH}_{m}(\Omega)\cap L_{loc}^{\infty}(\Omega).$
\end{lem}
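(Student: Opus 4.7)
The proof proceeds by induction on $k$, following the classical Chern--Levine--Nirenberg scheme adapted to the quaternionic Stokes formula of Lemma~\ref{lem1} and the domination trick $A\beta \pm \Delta\chi \in \widehat{\Gamma}_m$. The base case $k=0$ reduces to the trivial inequality $\|T\|_L \le \|T\|_K$.

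For the inductive step, interpose a compact $K_1$ with $L \subset \operatorname{int}(K_1)$ and $K_1 \subset \operatorname{int}(K)$, and choose a cutoff $\chi \in C_0^\infty(\operatorname{int}(K_1))$ with $0 \le \chi \le 1$ and $\chi \equiv 1$ on a neighborhood of $L$. Set
$$R := \Delta u_2 \wedge \cdots \wedge \Delta u_k \wedge T \wedge \beta^{p-k}.$$
Then $R$ is a closed $(2n-2)$-current, and because $T$ is $m$-positive while each $\Delta u_i$ and $\beta$ lie in $\widehat{\Gamma}_m$ (using $k\le m$ and Proposition~\ref{11}), the wedge $\alpha \wedge R$ is a nonnegative measure for every $\alpha \in \widehat{\Gamma}_m$. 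In particular the mass of $\beta \wedge R$ on $K_1$ equals $\|\Delta u_2 \wedge \cdots \wedge \Delta u_k \wedge T\|_{K_1}$, and
$$\|\Delta u_1 \wedge \cdots \wedge \Delta u_k \wedge T\|_L \;=\; \int_L \Delta u_1 \wedge R \;\le\; \int \chi\, \Delta u_1 \wedge R.$$

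Two applications of Lemma~\ref{lem1}, using that $R$ is closed and $\chi$ is compactly supported, transfer both derivatives $d_0,d_1$ from $u_1$ onto $\chi$, yielding
$$\int \chi\, \Delta u_1 \wedge R \;=\; \int u_1\, \Delta \chi \wedge R.$$
Choose now $A>0$ so large that $A\|q\|^2 \pm \chi$ is strictly plurisubharmonic on a neighborhood of $K_1$; since $\Delta(\|q\|^2) = 8\beta$, this amounts to $A\beta \pm \Delta\chi$ being strongly positive (hence in $\widehat{\Gamma}_m$), and $A$ depends only on the $C^2$-norm of $\chi$. Writing $\Delta\chi = (\Delta\chi + A\beta) - A\beta$ as a difference of $m$-positive $2$-forms gives
$$\Bigl|\int u_1\,\Delta\chi \wedge R\Bigr| \;\le\; \|u_1\|_{L^\infty(K_1)} \Bigl[\int_{K_1}(\Delta\chi + A\beta)\wedge R + A\int_{K_1}\beta\wedge R\Bigr].$$
Since $R$ is closed and $\chi$ is compactly supported, a further use of Lemma~\ref{lem1} forces $\int \Delta\chi \wedge R = 0$, so the bracket reduces to $2A\int_{K_1}\beta\wedge R = 2A\,\|\Delta u_2\wedge\cdots\wedge\Delta u_k\wedge T\|_{K_1}$. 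Applying the inductive hypothesis to the pair $K_1 \subset K$ closes the induction.

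The extension to $u_i \in \mathcal{SH}_m \cap L^\infty_{\mathrm{loc}}$ follows by regularization: the convolutions $u_i * \rho_\varepsilon$ are smooth and $m$-subharmonic by Proposition~\ref{pro4}(3), converge locally uniformly to $u_i$, and Proposition~\ref{pro11} passes the wedge product to the limit while the $L^\infty$ norms on $K$ are preserved. The main technical point is the bookkeeping of $m$-positivity for the product $R$ and carrying out the Stokes calculation through the two operators $d_0,d_1$ separately rather than through a single exterior derivative, since $d_0^2=d_1^2=0$ and $d_0 d_1=-d_1 d_0$ produce the same sign patterns as the classical $\partial\bar\partial$ computation.
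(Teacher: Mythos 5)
Your argument is correct, and it is the canonical Chern--Levine--Nirenberg scheme adapted to the quaternionic $d_0,d_1$ calculus. The paper, however, does not prove this lemma: it simply cites Corollary~5.1 and Theorem~5.2 of Liu--Wang~\cite{LW}, so a line-by-line comparison with ``the paper's own proof'' is not possible. Your proof is self-contained; its structure is what one would expect from the cited source. A few points to verify in detail: (i) the closedness of $R=\Delta u_2\wedge\cdots\wedge\Delta u_k\wedge T\wedge\beta^{p-k}$ follows since $T$ is a closed current and the remaining factors are closed smooth forms; (ii) the $m$-positivity you use (namely $\alpha\wedge R\geq 0$ for $\alpha\in\widehat{\Gamma}_m$) is exactly Definition of an $m$-positive $(2n-2p)$-current applied with the $p$ forms $\alpha,\Delta u_2,\ldots,\Delta u_k,\beta,\ldots,\beta$, so the bookkeeping is right; (iii) the identity $\int\Delta\chi\wedge R=0$ is obtained from $\Delta\chi\wedge R=d_0(d_1\chi\wedge R)$ (using $d_0R=0$) together with compact support of $d_1\chi$, as you assert. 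One small remark: when you pass to $u_i\in\mathcal{SH}_m\cap L^\infty_{\mathrm{loc}}$ by regularization, you should be explicit that $\Vert u_i\ast\rho_\varepsilon\Vert_{L^\infty(K)}\leq\Vert u_i\Vert_{L^\infty(K_\varepsilon)}$ for a slightly larger compact $K_\varepsilon\Subset\Omega$, so the bound on the enlarged compact is what actually survives the limit; this is harmless because one can shrink $K$ to $K_1$ first and enlarge again, but the phrase ``the $L^\infty$ norms on $K$ are preserved'' glosses over it.
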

\begin{proof}
See Corollary 5.1 and Theorem 5.2 in \cite{LW}.
\end{proof}
\subsection{Quasicontinuity of $m$-subharmonic functions.}
Let $\Omega$ be an open set of $\mathbb{H}^{n}$, and let $K$ be a compact subset of $\Omega$, the quaternionic  capacity of $K$ in $\Omega$ is defined by:
\begin{equation}\label{18}
C_{m}(K)=C_{m}(K,\Omega):= \sup\Big\{\int_{K}(\Delta u)^{m}\wedge \beta^{n-m}\;\;:u\in \mathcal{SH}_{m}(\Omega),-1\leq u\leq 0\Big \},
\end{equation}
 and $$C_{m}(E)=C_{m}(E,\Omega):=\sup\Big\{ C_{m}(K):\;\;K \;\;\mbox{is\; a\; compact\; subset\; of \; E}  \Big \}.$$
As in the complex case, the following proposition can be proved.
\begin{pro}\label{pro12}
\begin{enumerate}
\item[1)] If $E_{1}\subseteq E_{2}$, then $C_{m}(E_{1})\subseteq C_{m}(E_{2}).$
\item[2)] If $E\subseteq \Omega_{1}\subseteq \Omega_{2}$, then $C_{m}(E,\Omega_{1})\geq C_{m}(E,\Omega_{2}).$
\item[3)] $C_{m}(\cup_{j=1}^{\infty}E_{j})\leq \sum_{j=1}^{\infty}C_{m}(E_{j}).$
\item[4)] If $E_{1}\subseteq E_{2}\subseteq \cdots \; are\; borel\; sets\; in\; \Omega\;,\; then\; C_{m}(\cup_{j}E_{j})=\displaystyle{\lim _{j\rightarrow \infty}}C_{m}(E_{j}).$
\end{enumerate}
\end{pro}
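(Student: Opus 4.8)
The plan is to mimic, step by step, the classical Bedford--Taylor / Ko\l odziej arguments for the relative capacity, using only the ingredients already available in the quaternionic setting: the monotonicity and positivity of the operator $(\Delta u)^m\wedge\beta^{n-m}$, the comparison of classes $\mathcal{SH}_m(\Omega_1)\supseteq\mathcal{SH}_m(\Omega_2)$ when $\Omega_1\subseteq\Omega_2$ (a consequence of Proposition~\ref{pro4}, since the defining inequalities are local), the gluing construction in part~6) of Proposition~\ref{pro4}, and the Chern--Levine--Nirenberg estimate of Lemma~\ref{lem2}. Statements~1) and~2) are immediate from the definition~(\ref{18}): in~1) every compact $K\subseteq E_1$ is a compact subset of $E_2$, so the supremum defining $C_m(E_1)$ is taken over a smaller family of sets (the obvious typo ``$\subseteq$'' should read ``$\leq$''); in~2) the inclusion $\Omega_1\subseteq\Omega_2$ forces $\mathcal{SH}_m(\Omega_2)\cap\{-1\le u\le 0\}\subseteq\mathcal{SH}_m(\Omega_1)\cap\{-1\le u\le 0\}$ by restriction, and $\int_K(\Delta u)^m\wedge\beta^{n-m}$ is unchanged, so the supremum for $\Omega_1$ is over a larger family, giving $C_m(E,\Omega_1)\ge C_m(E,\Omega_2)$. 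Passing from compacts to Borel sets in each case is routine by taking suprema.

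For the countable subadditivity~3), I would first establish it for a finite union of \emph{compact} sets $K=K_1\cup\dots\cup K_N$: fix a competitor $u\in\mathcal{SH}_m(\Omega)$ with $-1\le u\le 0$; since the measure $(\Delta u)^m\wedge\beta^{n-m}$ is a nonnegative Borel measure (it is a positive current of top degree, hence has Radon measure coefficients, as recalled in Section~\ref{sec1}), we get $\int_K(\Delta u)^m\wedge\beta^{n-m}\le\sum_{j=1}^N\int_{K_j}(\Delta u)^m\wedge\beta^{n-m}\le\sum_{j=1}^N C_m(K_j)$, and taking the supremum over $u$ yields $C_m(K)\le\sum_j C_m(K_j)$. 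The general case of~3) for Borel sets $E=\bigcup_j E_j$ then follows by the standard two-step argument: given a compact $K\subseteq E$ and $\varepsilon>0$, the inner-regularity in the definition of $C_m(E_j)$ together with an exhaustion lets one reduce to finitely many compacts whose union covers $K$ up to the subadditivity already proved, and one passes to the limit; here one must be slightly careful that $C_m$ on Borel sets is only defined by inner approximation, so the $\varepsilon$'s are distributed as $\varepsilon/2^j$ in the usual way. This is bookkeeping, not a real difficulty.

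The genuinely substantial part is~4), the ``continuity from below'' along increasing sequences of Borel sets. Monotonicity from~1) already gives $C_m(\bigcup_j E_j)\ge\lim_j C_m(E_j)$, so the issue is the reverse inequality $C_m(\bigcup_j E_j)\le\lim_j C_m(E_j)$. The standard route is: it suffices to prove it when $E=\bigcup_j E_j$ is itself compact (or to reduce to compact $K\subseteq E$ and note $K\subseteq\bigcup_j E_j$ forces, after replacing $E_j$ by relatively open neighborhoods and using a later $\varepsilon$-argument, that $K$ is covered); then one invokes the characterization of $C_m$ in terms of the \emph{relative extremal function} $u_{E,\Omega}:=\sup\{v\in\mathcal{SH}_m(\Omega):v\le -1\text{ on }E,\ v\le 0\}$, whose upper semicontinuous regularization $u_{E,\Omega}^*$ lies in $\mathcal{SH}_m(\Omega)$ by part~4) of Proposition~\ref{pro4}, and for which $C_m(E,\Omega)=\int_\Omega(\Delta u_{E,\Omega}^*)^m\wedge\beta^{n-m}$ on a suitable relatively compact piece. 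One shows $u_{E_j,\Omega}^*\downarrow u_{E,\Omega}^*$ (decreasing, since $E_j$ increasing), and then uses the convergence of the Hessian operators along monotone sequences of bounded $m$-subharmonic functions --- which in the quaternionic setting is exactly the Bedford--Taylor-type convergence underlying Proposition~\ref{pro11} and Lemma~\ref{lem2} --- to conclude $\int(\Delta u_{E_j,\Omega}^*)^m\wedge\beta^{n-m}\to\int(\Delta u_{E,\Omega}^*)^m\wedge\beta^{n-m}$. The main obstacle is precisely assembling this extremal-function machinery: one needs (i) the quaternionic relative extremal function to be well-behaved (its regularization $m$-subharmonic, equal to $-1$ q.e. on $E$, maximal off $\overline E$), and (ii) weak continuity of $(\Delta\cdot)^m\wedge\beta^{n-m}$ under decreasing limits, together with the identity relating the capacity to $\int(\Delta u_{E}^*)^m\wedge\beta^{n-m}$. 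All of these are quaternionic analogues of results whose complex prototypes are in \cite{B,DK}; I would state the needed extremal-function lemma separately and prove it by the same sup-of-subsolutions / balayage argument, using the gluing in Proposition~\ref{pro4}(6) and the CLN estimate Lemma~\ref{lem2} to control masses, and then~4) drops out.
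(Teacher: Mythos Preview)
The paper does not actually prove this proposition: its entire ``proof'' is the sentence ``As in the complex case, the following proposition can be proved.'' So there is nothing to compare against beyond the standard Bedford--Taylor template you are following. Your treatment of 1)--3) is correct and is exactly what ``as in the complex case'' amounts to.

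For 4), your extremal-function route would work, but it is considerably heavier than what is needed and, in the paper's logical order, would force you to import machinery (relative extremal functions, the identity $C_m(E)=\int(\Delta u_E^*)^m\wedge\beta^{n-m}$, convergence under decreasing limits) that is not developed anywhere in the paper. There is a much lighter argument that stays entirely within what is already available at this point. Since $(\Delta u)^m\wedge\beta^{n-m}$ is a positive Radon measure on $\Omega\subset\mathbb{R}^{4n}$ (Section~\ref{sec1}), it is inner regular on every Borel set; hence for any Borel $E\subseteq\Omega$ and any competitor $u$ one has $\int_E(\Delta u)^m\wedge\beta^{n-m}=\sup_{K\subset E}\int_K(\Delta u)^m\wedge\beta^{n-m}\le C_m(E)$, and conversely $C_m(E)\le\sup_u\int_E(\Delta u)^m\wedge\beta^{n-m}$ is immediate, so the two expressions for $C_m(E)$ agree. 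Now for $E_1\subseteq E_2\subseteq\cdots$ and any fixed competitor $u$, monotone convergence gives $\int_{\cup_j E_j}(\Delta u)^m\wedge\beta^{n-m}=\lim_j\int_{E_j}(\Delta u)^m\wedge\beta^{n-m}\le\lim_j C_m(E_j)$; taking the supremum over $u$ yields $C_m(\cup_j E_j)\le\lim_j C_m(E_j)$, and the reverse inequality is 1). This avoids extremal functions entirely and is presumably what the authors intend by ``as in the complex case.''
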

Before we prove the quasicontinuity, we need some preparations first. We are going to generalize the
convergence of the currents $ \Delta v_{1}\wedge \ldots \wedge \Delta v_{k}\wedge \beta^{n-k}$  slightly to $v_{0}\Delta v_{1}\wedge \ldots \wedge \Delta v_{k}\wedge \beta^{n-k}$
 for $v_{0},\ldots ,v_{k}\in \mathcal{SH}_{m}(\Omega)\cap L_{loc}^{\infty}(\Omega)$.
 In order to prove its continuous on decreasing sequences, we need the
following lemma.
\begin{lem}\label{lemma}
 Let $T$ be a  $m$-positive $(2n-2k)$-current on $\Omega$. Suppose that $T=0$ outside a compact subset $K$ of $\Omega$ and
$T(\Delta \psi)=0$ for all $\psi \in  D^{2k-2}(\Omega)$, then $T=0$.
\end{lem}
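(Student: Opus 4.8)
The plan is to adapt the classical argument from complex pluripotential theory (the analogue of the Chern--Levine--Nirenberg-type lemma on vanishing currents) to the quaternionic $m$-positive setting. The statement to prove is that a compactly supported $m$-positive $(2n-2k)$-current $T$ on $\Omega$ with $T(\Delta\psi)=0$ for all $\psi\in D^{2k-2}(\Omega)$ must vanish. First I would reduce to showing $T(\eta)=0$ for every strongly positive test form $\eta\in D^{2k}(\Omega)$, which by the $m$-positivity hypothesis and linearity (writing an arbitrary test form as a combination of strongly positive ones) suffices to conclude $T=0$ as a current. Since $m$-positivity gives $\alpha_1\wedge\cdots\wedge\alpha_k\wedge T\ge 0$ for $\alpha_i\in\widehat{\Gamma}_m$, and the relevant test forms of bidegree $(2n-2k)$ can be built from wedge products $\alpha_1\wedge\cdots\wedge\alpha_{k-1}\wedge\beta^{\,n-m}$ together with $\beta$-powers, the problem localizes to controlling $T$ paired against such products.

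The key step is a regularization-and-Stokes argument. I would take a cutoff function $\chi\in C_0^\infty(\Omega)$ with $\chi\equiv 1$ on a neighborhood of $K=\operatorname{supp}T$, and for a point $q_0\in\Omega$ consider the standard quadratic potential: recall $\beta=\tfrac18\Delta(\|q\|^2)$, so that $\|q-q_0\|^2$ is a $PSH$ (indeed $\mathcal{SH}_n$) function whose Baston operator is (a multiple of) $\beta$. The trick is to write, for the test form $\eta$ one wants to test $T$ against, an expression of the form $\eta = \Delta\psi + (\text{lower-order corrections supported away from }K)$, or more precisely to use $T(\Delta\psi)=0$ to kill the "main part" and then estimate the remainder using the Chern--Levine--Nirenberg estimate of Lemma~\ref{lem2}. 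Concretely, for $\rho(q)=\|q-q_0\|^2$ and a smooth increasing convex function, $\rho^{k-1}$ (up to constants and $\beta$-factors) serves as a candidate $\psi\in D^{2k-2}$ after multiplication by a cutoff; then $\Delta\psi$ reproduces $\beta^{k-1}\wedge(\text{stuff})$ modulo terms involving $d_0\chi, d_1\chi$ which are supported on $\operatorname{supp}(d\chi)\cap K=\varnothing$. Pairing with $T$, the cutoff-derivative terms vanish because $T=0$ there, and the main term vanishes by hypothesis, forcing $\int T\wedge\beta^k=0$ locally, hence $\|T\|_K=0$.

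The main obstacle I anticipate is the bookkeeping of the non-exterior-differential calculus: unlike the complex case, $d_0$ and $d_1$ are not honest exterior derivatives, so the Leibniz rule (\ref{eq3}) and the Stokes-type formula of Lemma~\ref{lem1} must be applied with care, and one must check that the "integration by parts" identity $\int_\Omega \chi\, d_\alpha S = -\int_\Omega d_\alpha\chi\wedge S$ is legitimately applicable to the (only $L^1_{loc}$, measure-coefficient) current $S$ built from $T$ and the potential $\psi$. The standard fix is to first regularize $T$ by convolution — but $T$ is a current of possibly low regularity, so instead I would regularize the test objects (the potential $\psi$ and $\chi$ are already smooth) and pass to the limit using the continuity of the wedge product against the Radon-measure coefficients of $T$, exactly as in Proposition~\ref{pro11}. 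A secondary point requiring attention is that $\psi$ must genuinely lie in $D^{2k-2}(\Omega)$, i.e. be a compactly supported smooth $\wedge^{2k-2}\mathbb{C}^{2n}$-valued form; the cutoff $\chi$ guarantees compact support, and one chooses the $\mathbb{C}^{2n}$-valued coefficients so that $\Delta\psi$, restricted near $K$, equals the desired strongly positive form up to a positive constant. Once $\|T\|_K=0$ is established on every sufficiently small ball, a partition-of-unity argument yields $T\equiv 0$.
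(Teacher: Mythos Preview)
Your overall strategy is the standard one, and it matches what the paper invokes (the paper gives no argument of its own, only citing Lemma 3.1 of \cite{W1}). However, several steps are misstated or over-complicated. First, the test forms you pair $T$ against have degree $2k$, not $2n-2k$. Second, your candidate $\psi$ is garbled: ``$\rho^{k-1}$'' is a scalar, not a $(2k-2)$-form. The correct choice is simply $\psi=\chi\rho\,\beta^{k-1}\in \mathcal{D}^{2k-2}(\Omega)$ with $\rho(q)=\|q\|^2/8$ and $\chi\in C_0^\infty(\Omega)$, $\chi\equiv 1$ near $K$. Since $\beta$ is $d_0$- and $d_1$-closed, $\Delta\psi=\Delta(\chi\rho)\wedge\beta^{k-1}$, which equals $\beta^k$ on a neighborhood of $K$; hence $0=T(\Delta\psi)=T(\beta^k)$ directly, because $T$ vanishes where $\Delta\psi\neq\beta^k$. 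There is no need for any Stokes formula, CLN estimate, or regularization here: $\Delta\psi$ is already a smooth compactly supported test form and $T(\Delta\psi)$ is simply the current--form pairing, so the obstacles you anticipate in your penultimate paragraph do not arise.

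The genuine gap is the passage from $T\wedge\beta^k=0$ to $T=0$. Your reduction in the first paragraph is not quite right: $m$-positivity is \emph{not} positivity against strongly positive $2k$-forms, but rather the condition $T\wedge\alpha_1\wedge\cdots\wedge\alpha_k\ge 0$ for all $\alpha_i\in\widehat{\Gamma}_m$. The clean argument uses that $\beta$ lies in the \emph{interior} of $\widehat{\Gamma}_m$: for any constant real $2$-form $\gamma$ and small $\varepsilon>0$ one has $\beta\pm\varepsilon\gamma\in\widehat{\Gamma}_m$, so $0\le T\wedge(\beta\pm\varepsilon\gamma)\wedge\beta^{k-1}=\pm\varepsilon\,T\wedge\gamma\wedge\beta^{k-1}$, forcing $T\wedge\gamma\wedge\beta^{k-1}=0$. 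Iterating over each slot gives $T\wedge\gamma_1\wedge\cdots\wedge\gamma_k=0$ for all real $2$-forms $\gamma_i$, and such products span, so $T=0$. The partition-of-unity step you propose at the end is then unnecessary.
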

\begin{proof}
The same proof of Lemma 3.1 in \cite{W1}.
\end{proof}
\begin{pro}\label{pro3}
Let $v^{0},\ldots,v^{k}\in \mathcal{SH}_{m}(\Omega)\cap L_{loc}^{\infty}(\Omega)$, let $(v^{0})_{j},\ldots,(v^{k})_{j}$ be a decreasing sequences of $m$-subharmonic functions in $\Omega$ such that $\displaystyle{\lim _{j\rightarrow \infty}}v_{j}^{t}=v^{t}$ pointwisely in $\Omega$ for $t=0,\ldots,k$. Then for $k\leq m \leq n$ the currents $v_{j}^{0}\Delta v_{j}^{1}\wedge \ldots \wedge \Delta v_{j}^{k}\wedge \beta^{n-m}$ converge weakly to $v^{0}\Delta v^{1}\wedge \ldots \wedge \Delta v^{k}\wedge \beta^{n-m}$ as $j$ tends to $\infty$.
\end{pro}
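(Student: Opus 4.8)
The plan is to follow the classical Bedford–Taylor induction on $k$, exactly as in the complex Hessian case (cf. \cite{B,BE,DK}), adapting each step to the quaternionic operators $d_0,d_1,\Delta$ and the $\beta^{n-m}$-truncated wedge products defined in (\ref{15})--(\ref{17}).

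\medskip

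First I would treat the base case. For $k=0$ the statement is just that $v_j^0\to v^0$ in $L^1_{loc}$, which follows from monotone convergence since the $v_j^0$ decrease to $v^0$ and are locally uniformly bounded (being $m$-subharmonic, hence subharmonic, hence locally integrable). For the inductive step, assume the result for $k-1$ and consider $v_j^0\Delta v_j^1\wedge\cdots\wedge\Delta v_j^k\wedge\beta^{n-m}$. Write $T_j:=\Delta v_j^2\wedge\cdots\wedge\Delta v_j^k\wedge\beta^{n-m}$ and $T:=\Delta v^2\wedge\cdots\wedge\Delta v^k\wedge\beta^{n-m}$; by the inductive hypothesis (applied with the bounded function $v^1$, or with constants, in the leading slot) the currents $v_j^1 T_j$ and $T_j$ converge weakly to $v^1 T$ and $T$ respectively. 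The Chern–Levine–Nirenberg estimate of Lemma \ref{lem2} gives a uniform bound $\|v_j^0\Delta v_j^1\wedge T_j\|_L\le C\prod\|v_i^t\|_{L^\infty(K)}$ on any $L\Subset K\Subset\Omega$, independent of $j$, so the sequence $S_j:=v_j^0\Delta v_j^1\wedge T_j$ has weakly convergent subsequences; it suffices to show every weak limit $S$ equals $v^0\Delta v^1\wedge T=:S_\infty$.

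\medskip

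To identify the limit I would test against $\Delta\psi\wedge\beta^{n-m}$-type forms and use the defining identity (\ref{16}): for a test form $\psi$ of the appropriate degree,
\[
\int_\Omega S_j\wedge\Delta\psi
=\int_\Omega v_j^0\,\Delta v_j^1\wedge T_j\wedge\Delta\psi
=\int_\Omega v_j^0\,\Delta(v_j^1 T_j)\wedge\psi
=\int_\Omega v_j^1\, d_0 v_j^0\wedge d_1\psi\wedge T_j+\cdots,
\]
after integrating by parts twice via Lemma \ref{lem1} (the Stokes-type formula) and using $d_0 T_j=d_1 T_j=0$ together with the Leibniz rule (\ref{eq3}). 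Each resulting term is, up to sign, an integral of a bounded function times a product of $d_\alpha$'s and $\Delta$'s of the $v_j^t$'s against a fixed smooth form. One then passes to the limit term by term: where a bounded $m$-subharmonic function $v_j^t$ appears as a coefficient it converges in $L^1_{loc}$ and is uniformly bounded, while the accompanying current converges weakly by the inductive hypothesis; a standard lemma (uniformly bounded functions converging a.e. paired against weakly convergent positive currents converge, cf. the argument for \cite[Prop.\ 3.2]{W1} or Bedford–Taylor) lets us conclude $\int S_j\wedge\Delta\psi\to\int S_\infty\wedge\Delta\psi$. Since $S$ and $S_\infty$ agree after applying $\Delta$, the difference $R:=S-S_\infty$ satisfies $R(\Delta\psi)=0$ for all test $\psi$. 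Because all the $v_j^t$ converge uniformly on compacta outside a fixed compact set to the same limits (the sequences being decreasing with continuous—or at worst bounded—limits, one localizes so that far from a compact set the convergence is uniform), $R$ is supported in a compact subset of $\Omega$; Lemma \ref{lemma} then forces $R=0$, i.e.\ $S=S_\infty$.

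\medskip

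The main obstacle is the bookkeeping in the integration-by-parts step: keeping track of all the terms generated by applying the Leibniz rule (\ref{eq3}) to $\Delta(v_j^0\cdot v_j^1 T_j\wedge\psi)$ and verifying that each one is a bounded coefficient against a weakly convergent current to which the inductive hypothesis applies, rather than an object (like $d_0 v_j^0\wedge d_1 v_j^0$) requiring a separate convergence result. This is handled by first proving, in parallel, the convergence of the gradient-type currents (\ref{17}), using the polarization $d_0 v\wedge d_1 w=\tfrac12\big(d_0(v+w)\wedge d_1(v+w)-d_0 v\wedge d_1 v-d_0 w\wedge d_1 w\big)$ to reduce to the "diagonal" currents $d_0(v_j^0-c)\wedge d_1(v_j^0-c)\wedge T_j$, whose convergence is again obtained by the same testing-against-$\Delta\psi$ argument plus Lemma \ref{lemma}. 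A secondary technical point is ensuring the uniform CLN bound applies: this needs $k\le m$ and $T_j$ to be $m$-positive and closed, which holds because $\beta\in\wedge^2_{\R^+}\C^{2n}$ and each $\Delta v_j^t$ is a closed $m$-positive current by Proposition \ref{11} and the remarks preceding it.
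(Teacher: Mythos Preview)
Your outline follows the same route the paper points to (the paper's proof merely cites \cite[Theorem 5.2]{LW} and Lemma \ref{lemma} and says to repeat \cite[Proposition 3.2]{W1}): induction on $k$, localization so that $v_j^t=v^t$ off a compact $K$, the CLN mass bound (Lemma \ref{lem2}) to extract weak limits, integration by parts against $\Delta\psi$, and an appeal to Lemma \ref{lemma}. So the overall strategy matches.

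There is, however, a genuine gap at the last step. Lemma \ref{lemma} applies only to an $m$-\emph{positive} current, and your $R=S-S_\infty$ is merely a \emph{difference} of two $m$-positive currents, with no sign a priori. Positivity is used essentially in the proof of that lemma: writing $\beta^{m-k}=\Delta\big(\tfrac18\|q\|^2\,\beta^{m-k-1}\big)$, the hypothesis $T(\Delta\psi)=0$ forces the trace measure $T\wedge\beta^{m-k}$ to have zero total mass, and only $T\ge 0$ then yields $T=0$ (for a signed counterexample take $T=\Delta\chi\wedge\alpha$ with $\chi$ a bump function and $\alpha$ a closed form: this is compactly supported with $\Delta T=0$ but $T\ne 0$). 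Relatedly, the ``standard lemma'' you invoke---uniformly bounded functions converging a.e.\ paired against weakly convergent positive currents converge---is false without further structure (take $\mu_j=\delta_{1/j}\to\delta_0$ against $\mathbbm{1}_{\{0\}}$), so it cannot by itself justify passing to the limit term by term after your integration by parts. What the argument in \cite{W1} actually secures at this point is a one-sided inequality between $S$ and $S_\infty$, using the monotonicity $v_j^0\searrow v^0$ together with the inductive hypothesis applied with a \emph{fixed} (regularized) coefficient, so that Lemma \ref{lemma} is invoked on a genuinely $m$-positive current; your sketch should make this step explicit rather than folding it into the ``bookkeeping''.
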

\begin{proof}
By \cite[Theorem 5.2]{LW} and Lemma \ref{lemma}, we can repeat the same proof as in \cite[Proposition 3.2]{W1}.
\end{proof}
Let $u,v\in C^{2}(\Omega)$, define $$\gamma(u,v):=\dfrac{1}{2}(d_{0}u\wedge d_{1}v-d_{1}u\wedge d_{0}v):=\dfrac{1}{2}\sum_{i,j=0}^{2n-1}(\bigtriangledown_{i0}u\bigtriangledown_{j1}v-\bigtriangledown_{i1}u\bigtriangledown_{j0}v)\omega^{i}\wedge \omega^{j}.$$
In particular, $\displaystyle{\gamma(u,u)=d_{0}u\wedge d_{1}u=\dfrac{1}{2}\sum_{i,j=0}^{2n-1}(\bigtriangledown_{i0}u\bigtriangledown_{j1}u-\bigtriangledown_{i1}u\bigtriangledown_{j0}u)\omega^{i}\wedge \omega^{j}}.$
\begin{pro}{\cite[Proposition 3.3]{W1}}
Let $u\in C^{2}(\Omega)$, then $d_{0}u\wedge d_{1}u$ is a positive $2$-form.
\end{pro}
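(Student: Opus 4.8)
The plan is to reduce the statement to a pointwise positivity claim about a real quadratic form and then diagonalize. First I would fix a point $q\in\Omega$ and work in the fibre $\wedge^{2}\mathbb{C}^{2n}$ at $q$; since positivity of a form is a pointwise notion, it suffices to show that the $2$-form
$$\gamma(u,u)(q)=d_{0}u\wedge d_{1}u=\tfrac12\sum_{i,j=0}^{2n-1}\bigl(\nabla_{i0}u\,\nabla_{j1}u-\nabla_{i1}u\,\nabla_{j0}u\bigr)\omega^{i}\wedge\omega^{j}$$
is a positive element of $\wedge_{\mathbb{R}}^{2}\mathbb{C}^{2n}$, i.e. pairs nonnegatively against strongly positive $(2n-2)$-test forms. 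By the description of strongly positive $2$-forms recalled from \cite{W2} (pages 12–13), it is enough to check that for every $\mathbb{C}$-linear map $\eta^{\ast}:\mathbb{C}^{2}\to\mathbb{C}^{2n}$ one has
$$\eta^{\ast}\tilde\omega^{0}\wedge\eta^{\ast}\tilde\omega^{1}\wedge\bigl(\gamma(u,u)\bigr)^{?}\geq 0,$$
but in fact the cleanest route is the standard one: a real $2$-form $\alpha=\sum_{i,j}a_{ij}\,\omega^{i}\wedge\omega^{j}$ with $a=(a_{ij})$ antisymmetric-type data is positive precisely when the associated Hermitian-type matrix is semi-positive, so I would identify the coefficient array of $\gamma(u,u)$ with an explicit rank-one hyperhermitian object.

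Concretely, set $v_{\alpha}=(\nabla_{0\alpha}u,\nabla_{1\alpha}u,\ldots,\nabla_{(2n-1)\alpha}u)$ for $\alpha=0,1$, two vectors in $\mathbb{C}^{2n}$. Then the coefficient of $\omega^{i}\wedge\omega^{j}$ in $\gamma(u,u)$ is $\tfrac12\bigl((v_{0})_{i}(v_{1})_{j}-(v_{1})_{i}(v_{0})_{j}\bigr)$, which is exactly (up to the normalizing constant) the Plücker-type coordinate of the $2$-plane spanned by $v_{0},v_{1}$. The key point is that since $u$ is \emph{real}-valued, the operators $\nabla_{i0}$ and $\nabla_{i1}$ are related by the quaternionic structure: from the explicit formulas (\ref{4}) one reads off that the column $v_{1}$ is, row-block by row-block, obtained from $v_{0}$ by the fixed $\mathbb{C}$-antilinear map coming from right multiplication by $\mathrm{j}$ (this is precisely the computation $(\nabla_{(2l)0}+\mathrm{j}\nabla_{(2l+1)0}).(\nabla_{(2k+1)1}-\mathrm{j}\nabla_{(2k+1)0})$ displayed in the proof of the Proposition characterizing $\mathcal{SH}_m$). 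Hence $v_{0}$ and $v_{1}$ together assemble into a single quaternionic vector $w\in\mathbb{H}^{n}$, and $\gamma(u,u)$ corresponds to the rank-one semi-positive hyperhermitian matrix $w\,w^{\ast}$ (equivalently $\overline{w}^{t}w$), whose Moore determinant-style positivity is immediate: $X^{\ast}(ww^{\ast})X=|w^{\ast}X|^{2}\geq 0$. By the Lemma on non-negative hyperhermitian matrices (and the Bedford–Taylor-type dictionary between positive $2$-forms and semi-positive hyperhermitian matrices in \cite{W2}), this yields that $\gamma(u,u)(q)$ is a positive $2$-form.

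The remaining step is to translate "semi-positive hyperhermitian matrix" back into "positive $2$-form" in the sense used throughout the paper, i.e. nonnegative pairing against strongly positive test forms; this is already embedded in the correspondence set up by Wan–Wang (that $\Delta u$ is a strongly positive $2$-form iff the quaternionic Hessian is semi-positive hyperhermitian), applied now to the rank-one matrix $ww^{\ast}$ in place of the Hessian. I expect the main obstacle to be purely bookkeeping: correctly matching the indices in (\ref{4}) — keeping track of which rows carry a sign flip and which blocks pair with $\mathrm{j}$ — so that the array $\bigl(\nabla_{i0}u\,\nabla_{j1}u-\nabla_{i1}u\,\nabla_{j0}u\bigr)_{i,j}$ is identified with $2\,ww^{\ast}$ up to the stated constant; once that identification is pinned down, positivity is a one-line consequence of $|w^{\ast}X|^{2}\geq 0$. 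An alternative, essentially equivalent route that avoids the quaternionic repackaging is to observe that $d_{0}u\wedge d_{1}u$ is already a wedge of two $\mathbb{C}$-linear $1$-forms on $\mathbb{C}^{2n}$, namely $\bigl(\sum_k\nabla_{k0}u\,\omega^k\bigr)\wedge\bigl(\sum_k\nabla_{k1}u\,\omega^k\bigr)$, and a decomposable real $2$-form of this shape is automatically strongly positive; but to conclude \emph{real}ness of this element (so that it lies in $\wedge_{\mathbb{R}}^{2}\mathbb{C}^{2n}$ rather than just $\wedge^{2}\mathbb{C}^{2n}$) one again invokes that $u$ is real-valued, which is exactly where the relation between $\nabla_{k0}$ and $\nabla_{k1}$ re-enters.
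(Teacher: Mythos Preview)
The paper does not actually prove this proposition; it is stated with the citation \cite[Proposition~3.3]{W1} and no argument is given, so there is no in-paper proof to compare against.

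That said, your outline is a correct way to establish the result and is almost certainly what the cited reference does. The cleanest version is your ``alternative route'': from the explicit table~(\ref{4}) one checks for real $u$ that $\nabla_{(2l+1)1}u=\overline{\nabla_{(2l)0}u}$ and $\nabla_{(2l)1}u=-\overline{\nabla_{(2l+1)0}u}$, which is precisely the compatibility condition saying that the $\mathbb{C}$-linear map $\tilde\omega^{\alpha}\mapsto\sum_{k}\nabla_{k\alpha}u\,\omega^{k}$ is the pullback $\eta^{\ast}$ induced by a right $\mathbb{H}$-linear $\eta:\mathbb{H}\to\mathbb{H}^{n}$. Then $d_{0}u\wedge d_{1}u=\eta^{\ast}\tilde\omega^{0}\wedge\eta^{\ast}\tilde\omega^{1}$ is strongly positive by the very definition used in \cite{W2}, hence positive. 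Your first route via the rank-one hyperhermitian matrix $ww^{\ast}$ is the same statement translated through the dictionary $u_{l\bar k}=2(\Delta_{(2l)(2k+1)}u+\mathrm{j}\Delta_{(2l+1)(2k+1)}u)$ recalled in the proof of Proposition~2.2; either packaging works, and as you correctly anticipate the only genuine hazard is index bookkeeping, not a missing idea.
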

\begin{cor}\label{cor1}
Let $u,v \in C^{2}(\Omega)$ and let $\chi$ be a strongly positive $(2n-2)$-form, then
$$\Big \vert \int_{\Omega}\gamma(u,v)\wedge \chi \Big \vert^{2}\leq \int_{\Omega}\gamma(u,u)\wedge \chi.\int_{\Omega}\gamma(v,v)\wedge \chi.$$
\end{cor}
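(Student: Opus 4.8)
The plan is to reduce the claimed Cauchy–Schwarz-type inequality for the bilinear form $\gamma(u,v)$ to the ordinary Cauchy–Schwarz inequality on a suitable inner product space of functions. The starting point is the observation that $\gamma(u,v)$ is bilinear and symmetric in $u$ and $v$ (symmetry because swapping $u,v$ swaps the two terms $d_0u\wedge d_1v$ and $d_1u\wedge d_0v$ up to the sign coming from the anticommutativity of $\omega^i\wedge\omega^j$; this is exactly the reason for the factor $\tfrac12$ and the antisymmetric combination). Together with the previous proposition, which guarantees $\gamma(u,u)=d_0u\wedge d_1u\geq 0$ as a positive $2$-form, pairing against the strongly positive $(2n-2)$-form $\chi$ and integrating gives a genuine positive semidefinite symmetric bilinear form
$$\langle u,v\rangle_\chi:=\int_\Omega \gamma(u,v)\wedge\chi$$
on $C^2(\Omega)$, at least once we know $\langle u,u\rangle_\chi\geq 0$ for all $u$. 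The inequality to be proved is then precisely $|\langle u,v\rangle_\chi|^2\leq \langle u,u\rangle_\chi\,\langle v,v\rangle_\chi$, i.e. Cauchy–Schwarz for a positive semidefinite form.

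First I would verify the symmetry $\langle u,v\rangle_\chi=\langle v,u\rangle_\chi$: this is a short computation using that interchanging the two factors $\omega^i\wedge\omega^j\mapsto\omega^j\wedge\omega^i$ produces a sign, combined with relabeling the summation indices $i\leftrightarrow j$. Next I would confirm positivity of the diagonal: $\gamma(u,u)\wedge\chi$ is a positive $2n$-form because $\gamma(u,u)$ is a positive $2$-form and $\chi$ is strongly positive, and the wedge of a positive form with a strongly positive form is positive (this is part of the Wan–Wang formalism recalled in Section~\ref{sec1}); hence its integral is nonnegative. Then I would run the standard argument: for $t\in\mathbb{R}$,
$$0\leq \langle u+tv,\,u+tv\rangle_\chi=\langle u,u\rangle_\chi+2t\langle u,v\rangle_\chi+t^2\langle v,v\rangle_\chi,$$
and since this quadratic in $t$ is nonnegative for all real $t$, its discriminant is $\leq 0$, which is exactly the asserted inequality. (If $\langle v,v\rangle_\chi=0$ one handles the degenerate case separately, letting $t\to\pm\infty$ to conclude $\langle u,v\rangle_\chi=0$, so the inequality still holds trivially.)

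The only genuine subtlety — and the step I expect to require the most care — is justifying bilinearity and the sign bookkeeping in the definition of $\gamma$, so that $\langle\cdot,\cdot\rangle_\chi$ really is symmetric and the expansion of $\langle u+tv,u+tv\rangle_\chi$ is valid; once that is in place the argument is purely formal. One should also make sure that all the integrals involved are finite, which follows since $u,v\in C^2(\Omega)$ and $\chi$ is a fixed test form (so everything has compact support or we integrate over a relatively compact set where $\chi$ is supported). I would present the proof in essentially the three-line form above, preceded by the remark that $\langle\cdot,\cdot\rangle_\chi$ is a positive semidefinite symmetric bilinear form, citing the earlier proposition for positivity of $\gamma(u,u)$.
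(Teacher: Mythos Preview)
Your proposal is correct and is precisely the intended argument: the paper gives no proof of this corollary, treating it as an immediate consequence of the preceding proposition (positivity of $\gamma(u,u)=d_0u\wedge d_1u$) via the standard Cauchy--Schwarz discriminant argument you outline. One small point worth making explicit in your write-up is that the pairing $\langle u,v\rangle_\chi$ is real-valued (since by your symmetry computation and polarization $\gamma(u,v)=\tfrac14[\gamma(u+v,u+v)-\gamma(u-v,u-v)]$ is a difference of positive, hence real, $2$-forms), so the quadratic in the real parameter $t$ is genuinely real and the discriminant argument applies without modification.
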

\begin{lem}\label{lem3}
Let $0<r_{1}<r_{2},\;a\in \Omega,\;B_{1}=B(a,r_{1}) \;and \;B_{2}=B(a,r_{2})$. Suppose that $\lbrace u_{j}\rbrace_{j\in \mathbb{N}}\subseteq \mathcal{SH}_{m}(\Omega)\cap C^{2}(B_{2})$ converge decreasingly to $u \in \mathcal{SH}_{m}(\Omega)\cap L_{loc}^{\infty}(B_{2})$, then
\begin{equation*}
\displaystyle{
\lim _{j\longrightarrow\infty}\sup \Big\lbrace\int_{\overline{B}_{1}}(u_{j}-u)(\Delta v)^{m}\wedge \beta^{n-m}\;\;,v \in \mathcal{SH}_{m}(B_{2})\;,-1\leq v\leq 0\Big \rbrace =0.}
\end{equation*}
\end{lem}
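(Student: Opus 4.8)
The plan is to adapt the classical Bedford–Taylor argument for quasicontinuity (as in \cite{BE}, or its $m$-subharmonic complex analogue in \cite{W1,DK}) to the present quaternionic setting. The statement to prove is a uniform estimate: as $j\to\infty$, the quantity $\sup_v\int_{\overline B_1}(u_j-u)(\Delta v)^m\wedge\beta^{n-m}$, taken over all $v\in\mathcal{SH}_m(B_2)$ with $-1\le v\le 0$, tends to $0$. The three tools I expect to use are: the Chern–Levine–Nirenberg-type estimate of Lemma \ref{lem2} (to control masses of Hessian currents uniformly in $v$), the Cauchy–Schwarz-type inequality of Corollary \ref{cor1} for the bilinear form $\gamma$, and the convergence on decreasing sequences from Proposition \ref{pro3}.

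First I would fix a cutoff: choose $\rho<r_2$ with $r_1<\rho$ and a function $\chi\in\mathcal{SH}_m(B_2)\cap C^\infty$, $-1\le\chi\le 0$, with $\chi=0$ near $\overline B_1$ and $\chi\le -c<0$ on $\partial B(a,\rho)$, e.g. $\chi(q)=A(\|q-a\|^2-\rho^2)$ truncated, rescaled to lie in $[-1,0]$; such $\chi$ is $m$-subharmonic since $\|q-a\|^2$ is (strictly) plurisubharmonic, hence in $\mathcal{SH}_n\subset\mathcal{SH}_m$. Replacing $v$ by $\max(v,\chi+\text{const})$ using part 6) of Proposition \ref{pro4}, one reduces to the situation where the competitors $v$ all agree with a fixed function outside $B(a,\rho)$, so that the current $(\Delta v)^m\wedge\beta^{n-m}$ has a fixed total mass on $\overline B_1$ bounded by the CLN estimate (Lemma \ref{lem2}). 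Next, since $u_j-u\ge 0$ is decreasing to $0$ and the $u_j$ are continuous, by Dini-type reasoning on the integral against the fixed reference measure $(\Delta\chi)^m\wedge\beta^{n-m}$ one gets $\int_{\overline B_1}(u_j-u)(\Delta\chi)^m\wedge\beta^{n-m}\to 0$; the real content is to transfer this from the single reference current to all competitors $v$ simultaneously.

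The heart of the argument is an integration-by-parts induction on $m$ — more precisely, on the number of factors of $\Delta v$ appearing. Writing $w_j:=u_j-u\ge 0$, one expands
\begin{equation*}
\int w_j\,\Delta v\wedge (\Delta v)^{m-1}\wedge\beta^{n-m}
=\int w_j\,\Delta\bigl((\Delta v)^{m-1}\wedge\beta^{n-m}\,\text{-primitive}\bigr)
\end{equation*}
and, after applying the Stokes-type formula of Lemma \ref{lem1} (with the cutoff ensuring boundary terms vanish), moving two derivatives onto $w_j$ produces a term $\int d_0 w_j\wedge d_1 w_j\wedge(\Delta v)^{m-1}\wedge\beta^{n-m}$-type expression together with cross terms $\gamma(w_j,v)$-type expressions. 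The cross terms are dominated, via Corollary \ref{cor1} applied with the strongly positive form $\chi=(\Delta v)^{m-1}\wedge\beta^{n-m}$ (positivity of which is Proposition \ref{11}), by the geometric mean of $\int\gamma(w_j,w_j)\wedge(\Delta v)^{m-1}\wedge\beta^{n-m}$ and $\int\gamma(v,v)\wedge(\Delta v)^{m-1}\wedge\beta^{n-m}$; the latter is bounded uniformly in $v$ by the CLN estimate, and the former is exactly the quantity one feeds into the induction hypothesis (it has one fewer $\Delta v$ factor and $w_j$ still appears, now quadratically, but $0\le w_j\le \text{osc}$ so a factor $w_j$ can be pulled out in $L^\infty$). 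Iterating, one reduces the whole expression to a bounded multiple of $\int_{\overline B(a,\rho)} w_j\,(\Delta\chi)^m\wedge\beta^{n-m}$ plus vanishing terms, and then Proposition \ref{pro3} (continuity on decreasing sequences) together with the monotone convergence $w_j\downarrow 0$ closes the estimate.

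The main obstacle I anticipate is bookkeeping the boundary terms and the regularity: the competitors $v$ are only bounded $m$-subharmonic, not $C^2$, so before integrating by parts one must first regularize $v$ by $v*\rho_\varepsilon$ (legitimate by part 3) of Proposition \ref{pro4}), run the estimate with uniform constants independent of $\varepsilon$, and pass to the limit using Proposition \ref{pro3} and Proposition \ref{pro11}; keeping every constant uniform in both $j$, $v$, and $\varepsilon$ is the delicate point. A secondary technical nuisance is that the integrals are over the closed ball $\overline B_1$, so one must be slightly careful that the cutoff $\chi$ genuinely vanishes on a neighbourhood of $\overline B_1$ and that no mass of $(\Delta v)^m\wedge\beta^{n-m}$ concentrates on $\partial B_1$ in a way that is not controlled — this is handled by working on the slightly larger $\overline B(a,\rho)$ throughout and only restricting at the end.
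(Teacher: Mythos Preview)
Your strategy is essentially the paper's own: replace each competitor $v$ by an $m$-subharmonic function equal to a fixed reference outside a compact set (the paper uses $w=\max\{\rho,v/2\}$ with $\rho(q)=(\|q-a\|/s_1)^{\alpha}-1$ and the auxiliary $\tilde w=\max\{\rho,0\}$), regularize, integrate by parts via Lemma~\ref{lem1} to produce $\gamma$-terms, apply the Cauchy--Schwarz inequality of Corollary~\ref{cor1}, bound the ``$v$-side'' factor uniformly by the CLN estimate (Lemma~\ref{lem2}), and iterate to strip off the $\Delta v$ factors. The endgame with Proposition~\ref{pro3} is also the same.

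There is, however, one genuine gap in your write-up. You set $w_j:=u_j-u$ and speak of ``moving two derivatives onto $w_j$'' and of $\int d_0 w_j\wedge d_1 w_j\wedge(\Delta v)^{m-1}\wedge\beta^{n-m}$. But $u$ is only in $L^\infty_{loc}$, so $w_j$ is \emph{not} $C^2$; the Stokes formula of Lemma~\ref{lem1} and especially Corollary~\ref{cor1} are stated for $C^2$ functions, and regularizing $v$ alone does not fix this. The paper circumvents this by running the whole integration-by-parts/Cauchy--Schwarz machine with the \emph{smooth} difference $u_j-u_k$ (for $k\ge j$), obtaining
\[
\int_{\overline B_1}(u_j-u_k)(\Delta w_\varepsilon)^m\wedge\beta^{n-m}\;\le\;C\Big[\int_{\overline B_1}(u_j-u_k)\,(\Delta(u_j+u_k))^m\wedge\beta^{n-m}\Big]^{D}
\]
with constants independent of $v$ and $\varepsilon$, then letting $\varepsilon\to0$ via Proposition~\ref{pro3}, then $k\to\infty$ (again Proposition~\ref{pro3}) to replace $u_k$ by $u$ on both sides, and finally $j\to\infty$. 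You should make this two-index trick explicit; once you do, your outline and the paper's proof coincide. (A minor related point: to kill boundary terms the paper normalizes so that $u_j=u_k$ on $B_2\setminus B_1$, which plays the same role as your cutoff $\chi$ and is arranged by the gluing property in Proposition~\ref{pro4}\,(6).)
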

\begin{proof}
Without loss of generality, we may suppose that $-1\leq u_{j},u <0,\;u_{j}=u_{k}\;on\;B_{2}\setminus B_{1},\;for\;all\;j,k.$ As in the proof of Propostion 3.2 in \cite{W1},
let $v \in \mathcal{SH}_{m}(B_{2}),\;-1 \leq v < 0 $ and $s_{1},s_{2}$ be such $ 0 <r_{1}< s_{1} < s_{2}< r_{2}$. Define $\rho(q)=(\parallel q-a\parallel /s_{1})^{\alpha}-1,\;q \in \mathbb{H}^{n}$ where $\alpha$ is taken such $\rho\leq\dfrac{-1}{2}$ on $\overline{B}_{1}$. Define
\begin{equation}
w=\left\{
     \begin{array}{ll}
       \max \{ \rho,\dfrac{v}{2}\} \;\;;\;\;\;\;\;B_{2} \\
        \rho\;\;\;;\;\;\;\;\;\;\;\;\;out\;\;\; of \;B_{2}\\
     \end{array}
   \right.\;\;\;\;\;\mbox{and}\; \;\tilde{w}=\max \{ \rho,0 \}.
\end{equation}
Then $w,\tilde{w}\in \mathcal{SH}_{m}(\mathbb{H}^{n})$ and $w=\dfrac{v}{2}$ in $\bar{B}_{1}$, $w=\tilde{w}=\rho$ in $\mathbb{H}^{n} \setminus B(a,s_{1})$.
If $0<\epsilon < min\lbrace s_{2}-s_{1},s_{1}-r_{1}\rbrace$, then $w_{\epsilon}=w*\chi_{\epsilon}=\tilde{w}*\chi_{\epsilon} =\tilde{w}_{\epsilon}$ in $\mathbb{H}^{n}\setminus B(a,s_{2})$ and $\tilde{w}_{\epsilon}=1$ in $B_{1}$.
Note that $$\Delta=\dfrac{1}{2}(d_{0}d_{1}-d_{1}d_{0})\;\;\mbox{and}\;\; d_{0}d_{1}=-d_{1}d_{0},$$
then by (\ref{15}) we have
$$\begin{array}{ll}
\displaystyle{\int}_{\overline{B}_{1}}(u_{j}-u_{k})(\Delta \omega_{\varepsilon})^{m}\wedge \beta^{n-m}&=\displaystyle{\dfrac{1}{2}\int}_{\overline{B}_{1}}(u_{j}-u_{k})(d_{0}d_{1} \omega_{\varepsilon}-d_{1}d_{0}\omega_{\varepsilon}) \wedge(\Delta \omega_{\varepsilon})^{m-1} \wedge \beta^{n-m}\\ &=  \displaystyle{\dfrac{1}{2}\int}_{\overline{B}_{1}}(u_{j}-u_{k})d_{0}[d_{1}\omega_{\varepsilon} \wedge(\Delta \omega_{\varepsilon})^{m-1}\wedge \beta^{n-m}]\\&\;\;\;\;\;\;-\displaystyle{ \dfrac{1}{2}\int}_{\overline{B}_{1}}(u_{j}-u_{k})d_{1}[d_{0}\omega_{\varepsilon}\wedge(\Delta\omega_{\varepsilon})^{m-1}\wedge \beta^{n-m}]\\
& = -\displaystyle{\dfrac{1}{2}\int}_{\overline{B}_{1}}d_{0}(u_{j}-u_{k}) \wedge d_{1}\omega_{\varepsilon}\wedge(\Delta \omega_{\varepsilon})^{m-1}\wedge \beta^{n-m}\\ &\;\;\;\;\;\;+\displaystyle{\dfrac{1}{2} \int}_{\overline{B}_{1}}d_{1}(u_{j}-u_{k})\wedge d_{0}\omega_{\varepsilon}\wedge(\Delta\omega_{\varepsilon})^{m-1}\wedge\beta^{n-m}\\
&=-\displaystyle{\int}_{\overline{B}_{1}}\gamma(u_{j}-u_{k},\omega_{\varepsilon}) \wedge(\Delta\omega_{\varepsilon})^{m-1}\wedge \beta^{n-m}\\
&=-\displaystyle{\int}_{\overline{B}_{1}}\gamma(u_{j}-u_{k},\omega_{\varepsilon}-\tilde{\omega}_{\varepsilon})\wedge(\Delta\omega_{\varepsilon})^{m-1}\wedge\beta^{n-m}.\\
\end{array}$$
The third identity above follows from Stokes-type formula Lemma  \ref{lem1} and the fact that $u_{j} = u_{k}$ on the
boundary. The fourth and fifth identities follow from formula of $\gamma(u,v)$ and the fact that $\omega_{\varepsilon} = 1$ in a neighborhood
of $B_{1}$, respectively.
Now suppose that $k\geq j$, by Corollary \ref{cor1}  and by repeating the process, we have
$$\begin{array}{ll}
&\Big \vert \displaystyle{\int}_{\overline{B}_{1}}\gamma(u_{j}-u_{k},\omega_{\varepsilon}-\tilde{\omega}_{\varepsilon})\wedge(\Delta\omega_{\varepsilon})^{m-1}\wedge\beta^{n-m} \Big   \vert^{2}\\&
 \leq \displaystyle{\int}_{\overline{B}_{1}}\gamma(u_{j}-u_{k},u_{j}-u_{k})\wedge(\Delta\omega_{\varepsilon})^{m-1}\wedge\beta^{n-m}.\displaystyle{\int}_{\overline{B}_{1}}\gamma(\omega_{\varepsilon}-\tilde{\omega}_{\varepsilon},\omega_{\varepsilon}-\tilde{\omega}_{\varepsilon})\wedge(\Delta\omega_{\varepsilon})^{m-1}\wedge\beta^{n-m}\\
& \leq\displaystyle{\int}_{\overline{B}_{1}}(u_{j}-u_{k})\Delta(u_{j}-u_{k})\wedge(\Delta \omega_{\varepsilon})^{m-1}\wedge \beta^{n-m}.\displaystyle{\int}_{\overline{B}(a,s_{2})}(\omega_{\varepsilon}-\tilde{\omega}_{\varepsilon} )\Delta(\omega_{\varepsilon}-\tilde{\omega}_{\varepsilon})\wedge(\Delta \omega_{\varepsilon})^{m-1}\wedge \beta^{n-m}\\
 &\leq C^{'}\displaystyle{\int}_{\overline{B}_{1}}(u_{j}+u_{k})\Delta(u_{j}-u_{k})\wedge(\Delta \omega_{\varepsilon})^{m-1}\wedge \beta^{n-m}
 \end{array}$$
 where the last inequality follows from Lemma \ref{lem2}. Note that $\Delta(u_{j}-u_{k})\wedge(\Delta \omega_{\varepsilon})^{m-1}\wedge \beta^{n-m}$ need not be positive,
it suffices to estimate the integral obtained by replacing $\Delta(u_{j}-u_{k}) $ by $\Delta(u_{j}+u_{k})$. This term can then be estimated by Corollary \ref{cor1}  and Lemma \ref{lem2} as before.\\
Thus the power of $\Delta \omega_{\varepsilon}$ has been reduced to $n-1$. Repeating this argument $m-1$ times to get
$$\displaystyle{  \int}_{\overline{B}_{1}}(u_{j}-u_{k})(\Delta \omega_{\varepsilon})^{m}\wedge \beta^{n-m} \leq C\displaystyle{\Big [ \int}_{\overline{B}_{1}}(u_{j}-u_{k})(\Delta(u_{j}+u_{k}))^{m} \wedge \beta^{n-m} \Big ]^{D}$$
where $C,D >0 $ are constants independent of $u_{j},u_{k},\omega_{\varepsilon}$.
Let $\eta \in C_{0}^{\infty}(B_{2})$ be nonnegative and such that $\eta \equiv 1$ in a neighborhood of $B_{1}$. By Proposition \ref{pro3}, we
have
$$\begin{array}{ll}
\displaystyle{\int}_{\overline{B}_{1}}(u_{j}-u_{k})(\Delta \omega_{ \varepsilon})^{m}\wedge \beta^{n-m}
&= \displaystyle \int_{B_{2}} \eta(u_{j}-u_{k})( \Delta \omega_{ \varepsilon})^{m} \wedge \beta^{n-m} \displaystyle\underset{\varepsilon \rightarrow 0}{ \longrightarrow}  \displaystyle  \int_{B_{2}} \eta(u_{j}-u_{k})(\Delta \omega)^{m} \wedge \beta^{n-m}\\
&= \dfrac{1}{2^{m}}\displaystyle  \int_{\overline{B}_{1}}\eta(u_{j}-u_{k})(\Delta v)^{m}\wedge \beta^{n-m}.
\end{array}$$
Therefore
$$\displaystyle{\int}_{\overline{B}_{1}}\eta(u_{j}-u_{k})(\Delta v)^{m}\wedge \beta^{n-m}\leq 2^{m}C \Big[\displaystyle{ \int}_{B_{1}}\eta(u_{j}-u_{k})(\Delta u_{j}+u_{k})^{m}\wedge \beta^{n-m} \Big]^{D}.$$
let $k\longrightarrow\infty$, by applying Proposition \ref{pro3} and taking the supremum, we have
$$\sup \Big \lbrace\int_{\overline{B}_{1}}(u_{j}-u)(\Delta v)^{m}\wedge \beta^{n-m},\;v \in \mathcal{SH}_{m}(B_{2})\;,-1\leq v\leq 0 \Big \rbrace  \leq 2^{m}C \Big[\int_{B_{1}}\eta(u_{j}-u_{k})(\Delta u_{j}+u)^{m}\wedge \beta^{n-m}\Big]^{D}$$
Then let $j\longrightarrow\infty $ and apply Proposition \ref{pro3} to get the conclusion.
\end{proof}
\begin{thm} \label{th1}
For a $m$-subharmonic function $u$ defined in $\Omega$ and a positive number $\varepsilon$
one can find an open set $U \subset \Omega$ with $C_{m}(U,\Omega)<\varepsilon$ and such that $u$ restricted to
$\Omega\setminus U$ is continuous.
\end{thm}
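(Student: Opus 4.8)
The plan is to follow the classical Bedford--Taylor quasicontinuity argument, adapted to the $m$-subharmonic setting with the extra factor $\beta^{n-m}$. First I would reduce to a local statement: since $\Omega$ is covered by countably many balls and $C_m$ is countably subadditive (Proposition \ref{pro12}(3)), it suffices to find, for each ball $B_1$ compactly contained in a slightly larger ball $B_2\Subset\Omega$ and each $\varepsilon>0$, an open set $U\subset B_1$ with $C_m(U,\Omega)<\varepsilon$ on whose complement $u|_{\overline B_1}$ is continuous. I would also assume $u$ is bounded on $B_2$, which is automatic after shrinking, and normalize $-1\le u<0$ there by adding a constant and scaling.

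Next I would take the standard regularizations $u_j:=u*\rho_{1/j}$, which by Proposition \ref{pro4}(3) are $m$-subharmonic and smooth on $B_2$ for $j$ large, and which decrease pointwise to $u$ on $B_2$ (the sequence can be arranged decreasing in the usual way, or one passes to a subsequence). The key quantitative input is Lemma \ref{lem3}: it gives
\[
\lim_{j\to\infty}\ \sup\Big\{\int_{\overline B_1}(u_j-u)(\Delta v)^m\wedge\beta^{n-m}\ :\ v\in\mathcal{SH}_m(B_2),\ -1\le v\le 0\Big\}=0.
\]
Now fix $\delta>0$. By Dini-type reasoning combined with the above, for each $k$ choose $j_k$ so large that
\[
\sup_{v}\int_{\overline B_1}(u_{j_k}-u)(\Delta v)^m\wedge\beta^{n-m}<\frac{\delta}{2^{k}},
\]
and set $U_k:=\{q\in B_1:\ u_{j_k}(q)-u(q)>1/k\}$. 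Since $u_{j_k}$ is continuous and $u$ is upper semicontinuous, $u_{j_k}-u$ is lower semicontinuous, so $U_k$ is open. For any competitor $v$ with $-1\le v\le 0$ one has $\int_{U_k}(\Delta v)^m\wedge\beta^{n-m}\le k\int_{\overline B_1}(u_{j_k}-u)(\Delta v)^m\wedge\beta^{n-m}<k\delta/2^k$; taking the supremum over $v$ and over compact subsets gives $C_m(U_k,\Omega)\le k\delta/2^{k}$. Put $U:=\bigcup_{k\ge k_0}U_k$ for $k_0$ chosen so that $\sum_{k\ge k_0}k\delta/2^{k}<\varepsilon$; then $C_m(U,\Omega)<\varepsilon$ by subadditivity. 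On $\overline B_1\setminus U$ we have $0\le u_{j_k}-u\le 1/k$ for all $k\ge k_0$, i.e. $u_{j_k}\to u$ uniformly on $\overline B_1\setminus U$; being a uniform limit of continuous functions, $u|_{\overline B_1\setminus U}$ is continuous. Finally I would patch the balls together: writing $\Omega=\bigcup_\ell B_1^{(\ell)}$ and running the above with $\varepsilon/2^\ell$ on each, the union of the exceptional sets is open with capacity $<\varepsilon$, and continuity of $u$ on the complement is a local property, hence holds on the whole complement.

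The main obstacle is entirely packaged into Lemma \ref{lem3}: establishing the uniform (in the competitor $v$) smallness of $\int_{\overline B_1}(u_j-u)(\Delta v)^m\wedge\beta^{n-m}$, which in turn rests on the Chern--Levine--Nirenberg estimate (Lemma \ref{lem2}), the Cauchy--Schwarz inequality for the mixed form $\gamma$ (Corollary \ref{cor1}), and the convergence of mixed Hessian currents along decreasing sequences (Proposition \ref{pro3}). Once that lemma is available the rest is the soft measure-theoretic bookkeeping sketched above; one only has to be mildly careful that $U_k$ is genuinely open (lower semicontinuity of $u_{j_k}-u$) and that the capacity estimate for $U_k$ uses the definition \eqref{18} with the correct normalization $-1\le v\le 0$.
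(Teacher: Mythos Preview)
Your proposal is correct and follows essentially the same route as the paper: localize, invoke Lemma~\ref{lem3} as the key quantitative input, define the open sets $\{u_j-u>1/k\}$, bound their capacity by a Chebyshev inequality, and sum via the countable subadditivity of Proposition~\ref{pro12}(3). The only cosmetic differences are that the paper first reduces $\Omega$ to a single ball and exhausts it by concentric balls $B_j$ (choosing one approximant $u_j$ per $B_j$), whereas you fix a pair $B_1\Subset B_2$ and extract a subsequence $u_{j_k}$, patching the balls at the end; neither choice affects the argument.
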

\begin{proof}
By Proposition \ref{pro12}, we can suppose that $\Omega$ is a ball $B(a,\eta)$ for some $a\in \mathbb{H}^{n}$, $\eta> 0$. Let ${\eta_{j}}$ be an increasing sequence of positive numbers such that $\displaystyle{\lim_{j\rightarrow \infty}\eta_{j}}=\eta$ and let $B_{j}=B(a,\eta_{j})$. Now fix $\varepsilon$, by Lemma \ref{lem3}, we can choose a decreasing sequence ${u_{j}}\subseteq \mathcal{SH}_{m}(B_{j})\cap C^{\infty}(\overline{B}_{j})$ that converges to $u$ such that:
$$sup \Big\lbrace\int_{\overline{B}_{j}}(u_{j}-u)(\Delta u)^{m}\wedge \beta^{n-m}\;\;:u\in \mathcal{SH}_{m}(\Omega),-1\leq u\leq 0 \Big\rbrace <2^{-j}.$$
Let $$\omega_{j}=\lbrace q\in B_{j}\;:\;(u_{j}-u)(q)>j^{-1}\rbrace\;\; and\;\; \omega=\cup_{j\geq k}\omega_{j},$$
then $\omega$ is open  and we have $C_{m}(\omega_{j})<j2^{-j}$.\\
Choose $k$ such that $\displaystyle{\sum_{j=k}^{\infty}}(j2^{-j})< \varepsilon,$ from Proposition \ref{pro12} follows $C_{m}(\omega)\leq \displaystyle{\sum_{j=k}^{\infty}}C_{m}(\omega_{j})\leq \displaystyle{\sum_{j=k}^{\infty}}(j2^{-j})< \varepsilon$.
Since $u_{j}$ converges to $u$ locally uniformly in $\Omega\setminus\omega$ , then $u$ is continuous on $\Omega\setminus\omega$.
\end{proof}
\begin{lem}\label{lem4}
Let $\lbrace u_{j}\rbrace_{j\in \mathbb{N}}$ be a locally uniformly bounded sequence in $\mathcal{SH}_{m}\cap L_{loc}^{\infty}(\Omega)$ that increases to $u \in \mathcal{SH}_{m}\cap L_{loc}^{\infty}
(\Omega)$ almost every where in $\Omega$ and let $v^{1},\ldots,v^{m} \in \mathcal{SH}_{m}(\Omega)\cap L_{loc}^{\infty}(\Omega)$. Then the currents $u_{j}\Delta v^{1}\wedge \ldots \wedge \Delta v^{m}\wedge \beta^{n-m}$ converge to $u\Delta v^{1}\wedge \ldots \wedge \Delta v^{m}\wedge \beta^{n-m}$ as $j\rightarrow\infty.$
\end{lem}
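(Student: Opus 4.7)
My plan is to adapt the classical Bedford--Taylor monotone convergence argument to the quaternionic $m$-subharmonic setting, combining quasicontinuity (Theorem \ref{th1}) with a capacity estimate for the measure carried by $T := \Delta v^{1} \wedge \cdots \wedge \Delta v^{m} \wedge \beta^{n-m}$. After an affine rescaling on any fixed relatively compact sub-domain I may assume $-1 \le u_{j}, u \le 0$ and $-1 \le v^{i} \le 0$. Since $T$ is a closed positive $2n$-current by the Bedford--Taylor construction (\ref{15}), I write $T = \nu\,\Omega_{2n}$ for a positive Radon measure $\nu$; the goal then becomes $\int_{\Omega} \chi (u - u_{j})\, d\nu \to 0$ for every $\chi \in C_{0}^{\infty}(\Omega)$, and by splitting into positive and negative parts I may assume $\chi \ge 0$.

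The first ingredient is a comparison of $\nu$ with the $m$-capacity. Let $w := v^{1} + \cdots + v^{m}$; by Proposition \ref{pro4}(1), $w \in \mathcal{SH}_{m}(\Omega)$ with $-m \le w \le 0$, so $w/m$ is admissible in the definition (\ref{18}) of $C_m$. Expanding $(\Delta w)^{m} = (\Delta v^{1} + \cdots + \Delta v^{m})^{m}$ by the multinomial formula yields a sum of non-negative currents (each a wedge of at most $m$ operators $\Delta v^{i}$, positive by Proposition \ref{11} extended to currents via (\ref{15})), in which the mixed term $\Delta v^{1} \wedge \cdots \wedge \Delta v^{m}$ appears with coefficient $m!$. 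Wedging with $\beta^{n-m}$ gives $m!\,T \le (\Delta w)^{m} \wedge \beta^{n-m} = m^{m}(\Delta(w/m))^{m} \wedge \beta^{n-m}$ as positive measures, whence for every Borel $E \Subset \Omega$,
\[
\nu(E) \;\le\; \frac{m^{m}}{m!}\, C_{m}(E, \Omega).
\]
In particular $\nu$ does not charge any set of vanishing $m$-capacity.

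The second ingredient handles the non-uniform pointwise convergence. Since $u_{j} \le u$ a.e.\ and both are $m$-subharmonic (hence subharmonic by Proposition \ref{pro4}(5)), the sub-mean-value inequality forces $u_{j} \le u$ everywhere, so the pointwise limit $\hat u := \lim_{j} u_{j}$ exists everywhere with $\hat u \le u$ and $\hat u = u$ a.e.; by Proposition \ref{pro4}(4) the u.s.c.\ regularization $\hat u^{*}$ is $m$-subharmonic, and agreeing with the u.s.c.\ function $u$ a.e.\ forces $\hat u^{*} = u$ everywhere. A Choquet-type lemma in our setting (obtainable from the capacity estimate above together with Proposition \ref{pro12}, paralleling the classical complex argument) then shows that $N := \{\hat u < \hat u^{*}\} = \{\hat u < u\}$ has vanishing $m$-capacity. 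Fix $\varepsilon > 0$ and a compact $K$ containing $\mathrm{supp}(\chi)$. Applying Theorem \ref{th1} to $u$ and to every $u_{j}$, and covering $N$ by an open set of small $m$-capacity, I obtain a single open set $U$ with $C_{m}(U) < \varepsilon$ such that $N \subset U$ and $u, u_{1}, u_{2}, \ldots$ are all continuous on $\Omega \setminus U$. On the compact $K \setminus U$ the continuous functions $u_{j}$ then converge pointwise and monotonically to the continuous function $u$, so Dini's theorem upgrades this to uniform convergence.

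Combining the two ingredients, the standard decomposition
\[
\Bigl|\int \chi(u - u_{j})\, d\nu\Bigr| \;\le\; 2\|\chi\|_{\infty}\,\nu(K \cap U) \;+\; \|\chi\|_{\infty}\,\nu(K)\,\sup_{K \setminus U}(u - u_{j}) \;\le\; \frac{2m^{m}}{m!}\|\chi\|_{\infty}\,\varepsilon + o(1)
\]
yields the conclusion by letting first $j \to \infty$ and then $\varepsilon \to 0$. I expect the main obstacle to be the Choquet-type step --- that the exceptional set $\{\hat u < \hat u^{*}\}$ has zero $m$-capacity --- which is the only ingredient not already recorded in the excerpt; in the complex case it is classical, and the same proof (using the outer regularity and subadditivity of $C_{m}$ from Proposition \ref{pro12}) should transfer, but verifying it in the quaternionic $m$-subharmonic class is the real work beyond assembling the pieces.
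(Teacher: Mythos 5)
Your strategy is genuinely different from the paper's, and you are right to flag the Choquet-type step as the crux --- it is a real gap, and one you should be more worried about than you seem to be. The paper never regularizes the $u_j$'s nor invokes their quasicontinuity; instead it notes that $u_j T \to \hat u\, T$ by monotone convergence where $\hat u := \lim_j u_j$ pointwise, observes $\hat u\, T \leq u T$, and then reduces to showing the two measures have the same local mass. That mass equality is established by mollifying the $v^i$ one at a time: integration by parts moves $\Delta(v^i - v^i_\varepsilon)$ onto the other factors, the resulting error is split over a small-capacity set where $v^i$ fails to be continuous (Theorem~\ref{th1} applied to $v^i$, not to $u_j$) and its complement (where $v^i_\varepsilon \to v^i$ uniformly), and Lemma~\ref{lem2} controls everything uniformly in $j$. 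Once the $v^i$ are replaced by smooth $v^i_{\varepsilon_i}$, bounded a.e.\ convergence of $u_j$ suffices, and Proposition~\ref{pro3} lets each $\varepsilon_i \to 0$. This route never needs any statement about the exceptional set $N = \{\hat u < u\}$.

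Your route, by contrast, \emph{requires} $C_m(N) = 0$ together with an outer-regular cover of $N$. Proposition~\ref{pro12} supplies the cover once you know the capacity vanishes, but that vanishing is not in the paper's toolkit at this stage, and you cannot simply ``transfer the classical proof'' without circularity: in the complex Bedford--Taylor theory the statement that negligible sets of increasing sequences have zero capacity is a \emph{consequence} of the increasing monotone convergence theorem, not an ingredient of it. The standard argument pairs $(\Delta\varphi)^m\wedge\beta^{n-m}$ against $u - \hat u$ and invokes precisely the convergence along increasing sequences that you are in the middle of proving, so as written you would be assuming a corollary of the lemma to prove the lemma. Salvaging your approach would require an independent proof that $C_m(N)=0$, e.g.\ via relative $m$-extremal functions and comparison principles, which is substantially more machinery than the paper has built. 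Apart from that one brick, the rest of your assembly is sound and some of it is genuinely useful: the bound $m!\, T \leq (\Delta(v^1+\cdots+v^m))^m \wedge\beta^{n-m}$, hence $\nu(E) \leq (m^m/m!)\, C_m(E)$, is correct, and the Dini argument on $K\setminus U$ together with the union of quasicontinuity sets is exactly how one would close the argument \emph{given} the Choquet lemma. Unfortunately the missing brick is the load-bearing one.
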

\begin{proof}
 As the problem is local, we can assume that all functions have values in $(-1, 0)$. Let $ v=\lim u_{j},$ by
Lebesgue monotone convergence theorem,  we have $v\Delta v^{1}\wedge \ldots \wedge \Delta v^{m}\wedge \beta^{n-m}$ is the weak limit of $u_{j}\Delta v^{1}\wedge \ldots \wedge \Delta v^{m}\wedge \beta^{n-m}.$    Since $u\Delta v^{1}\wedge \ldots \wedge \Delta v^{m}\wedge \beta^{n-m}\geq  v\Delta v^{1}\wedge \ldots \wedge \Delta v^{m}\wedge \beta^{n-m},$
 it is enough to prove that the two measures have locally the same
mass.
Denote $v^{i}\ast\chi_{\varepsilon}$ by $ v_{\varepsilon}^{i}$, then $v_{\varepsilon}^{i} \searrow v^{i}$ on B as $ \varepsilon \rightarrow 0.$ As in the proof of Proposition \ref{pro3}, we can assume that $v_{\varepsilon}^{i}=v^{i}$
 in $B \setminus K$, where $K$ is a compact subset of $B$. Let $\eta \in C_{0}^{\infty}(B)\; \eta =1$ near $K$, by the quasicontinuity
of $v^{m}$ there exists an open subset $\omega$ of $\Omega$ such that $C(\omega)<\varepsilon$ and $v^{m}\vert_{\Omega \setminus \omega}$ is continuous. Then
$$\begin{array}{ll}
\displaystyle{  \int}_{\Omega}\eta u_{j}\Delta v^{1}\wedge \ldots \wedge\Delta v^{m-1}\wedge \Delta(v^{m}-v_{\varepsilon}^{m})\wedge \beta^{n-m}
&=\displaystyle{  \int}_{\Omega}u_{j}\Delta v^{1}\wedge \ldots \wedge\Delta v^{m-1}\wedge \Delta(v^{m}-v_{\varepsilon}^{m})\wedge \beta^{n-m}\\
&= \displaystyle{  \int}_{\Omega}(v^{m}-v_{\varepsilon}^{m}) \Delta v^{1}\wedge \ldots \wedge\Delta v^{m-1} \wedge \Delta u_{j}\wedge \beta^{n-m}\\
&=\displaystyle \int_{\Omega \setminus \omega}(v^{m}-v_{\varepsilon}^{m}) \Delta v^{1}\wedge \ldots \wedge\Delta v^{m-1}\wedge \Delta u_{j}\wedge \beta^{n-m} \\
&\;\;\;\;\;+ \displaystyle \int_{\omega}(v^{m}-v_{\varepsilon}^{m}) \Delta v^{1}\wedge \ldots \wedge\Delta v^{m-1}\wedge \Delta u_{j}\wedge \beta^{n-m}.\\
\end{array}$$
Since all functions have values in $(-1,0)$, by definition (\ref{17}) we have
$$
 \Big \vert \displaystyle{  \int}_{\Omega}(v^{m}-v_{\varepsilon}^{m}) \Delta v^{1}\wedge \ldots \wedge \Delta v^{m-1} \wedge \Delta u_{j}\wedge \beta^{n-m} \Big \vert
\leq \Vert v_{\varepsilon}^{m} \Vert_{L^{\infty}(\omega)}C(\omega)< C^{'}\varepsilon.
$$
Therefore $\displaystyle{  \int}_{\Omega}\eta u_{j}\Delta v^{1}\wedge \ldots \wedge\Delta v^{m-1}\wedge \Delta(v^{m}-v_{\varepsilon}^{m})\wedge \beta^{n-m} $ converges to $0$ uniformly with respect to $j$ when $\varepsilon \rightarrow 0$, by
Lemma \ref{lem2} and  repeating this process $(n-1)$ times we get
$$\begin{array}{ll}
\displaystyle{  \int}_{\Omega}\eta u_{j}\Delta v^{1}\wedge \ldots \wedge\Delta v^{m}\wedge  \beta^{n-m}
&\geq  \displaystyle{  \int}_{\Omega} \eta u_{j}\Delta v^{1}\wedge \ldots \wedge\Delta v^{m-1}\wedge \Delta v_{\varepsilon_{m}}^{m}\wedge \beta^{n-m}-\delta\\
&\geq  \displaystyle{  \int}_{\Omega} \Delta v_{\varepsilon_{1}}^{1}\wedge \ldots \wedge\Delta v_{\varepsilon_{m}}^{m} \wedge \beta^{n-m}-n\delta.
\end{array}$$
Then
$\displaystyle{\liminf _{j\longrightarrow \infty} \int_{\Omega}\eta u_{j}\Delta v^{1}\wedge \ldots \wedge\Delta v^{m}\wedge  \beta^{n-m}}\geq \displaystyle{  \int}_{\Omega}u \Delta v_{\varepsilon_{1}}^{1}\wedge \ldots \wedge\Delta v_{\varepsilon_{m}}^{m} \wedge \beta^{n-m}-n\delta .$\\
Now let $\delta,\varepsilon_{1},\ldots,\varepsilon_{m} \longrightarrow 0$ and use Proposition \ref{pro3} for $v_{j}^{0}=v^{0}=1$ to get the desired conclusion.
\end{proof}
\begin{pro}\label{pro5}
Let $\lbrace u_{j}\rbrace_{j \in \mathbb{N}}$ be a sequence in $\mathcal{SH}_{m}\cap L_{loc}^{\infty}
(\Omega)$ that increases to $u \in \mathcal{SH}_{m} \cap L_{loc}^{\infty}$ almost everywhere in $\Omega$ (with respect to lebesgue measure). Then the currents $(\Delta u_{j})^{m}\wedge \beta^{n-m}$ converge weakly to $(\Delta u)^{m}\wedge \beta^{n-m}$ as $j\rightarrow \infty.$
\end{pro}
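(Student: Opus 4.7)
My strategy is a Bedford--Taylor-style induction on the number of $\Delta$-factors, with the quasicontinuity Theorem~\ref{th1} as the crucial tool. By locality I may assume $\Omega$ is a ball on which all functions take values in $(-1,0)$; the Chern--Levine--Nirenberg estimate (Lemma~\ref{lem2}) then gives uniform local mass bounds on the currents in question, so weak-$*$ precompactness is automatic and it suffices to identify limits.

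I would prove by induction on $p$, for $1\le p\le m$, the strengthened statement $(\star_p)$: whenever $(v_j),(w_j)\subset \mathcal{SH}_m\cap L^\infty_{\mathrm{loc}}(\Omega)$ increase almost everywhere to $v,w\in \mathcal{SH}_m\cap L^\infty_{\mathrm{loc}}(\Omega)$, the currents $v_j(\Delta w_j)^{p-1}\wedge\beta^{n-m}$ converge weakly to $v(\Delta w)^{p-1}\wedge\beta^{n-m}$. Once $(\star_m)$ is established, setting $v_j=w_j=u_j$ and applying $\Delta$ to both sides, together with the defining identity~(\ref{15}) and the fact that $\Delta$ is continuous on currents, yields the proposition. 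The base case $p=1$ is immediate, as $v_j\to v$ in $L^1_{\mathrm{loc}}$ by the monotone convergence theorem and $\beta^{n-m}$ is a fixed smooth (closed) form.

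For the inductive step $p\ge 2$, first note that $(\star_{p-1})$ applied with $v_j=w_j$, combined with (\ref{15}) and the continuity of $\Delta$ on currents, yields $S_j:=(\Delta w_j)^{p-1}\wedge\beta^{n-m}\to S:=(\Delta w)^{p-1}\wedge\beta^{n-m}$ weakly. To prove $v_j S_j\to v S$, decompose
$$v_j S_j - v S \;=\; v_j(S_j - S) \;+\; (v_j - v)S.$$
The second summand converges to $0$ weakly by Lemma~\ref{lem4}, which handles exactly the case of a fixed limit current against $v_j\nearrow v$.

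\textbf{The main obstacle} is the first summand $v_j(S_j-S)$, where both coefficient and current vary with $j$ and $v_j$ is only bounded $m$-subharmonic (hence not continuous in general). Fix $\varepsilon>0$. Theorem~\ref{th1} provides an open set $U\subset\Omega$ with $C_m(U)<\varepsilon$ such that $v|_{\Omega\setminus U}$ is continuous; applying the theorem to each $v_j$ and using the subadditivity of $C_m$ (Proposition~\ref{pro12}) together with a diagonal argument, one may enlarge $U$ so that every $v_j$ is continuous on $\Omega\setminus U$ while keeping $C_m(U)<2\varepsilon$. Let $\tilde v\in C(\Omega)$ be a Tietze extension of $v|_{\Omega\setminus U}$ with $\|\tilde v\|_\infty\le\|v\|_\infty$. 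For a smooth test form $\chi$,
$$\Bigl|\int\chi\wedge v_j(S_j-S)\Bigr|\;\le\;\Bigl|\int\chi\wedge\tilde v\,(S_j-S)\Bigr|\;+\;\Bigl|\int_U\chi\wedge(v_j-\tilde v)(S_j-S)\Bigr|.$$
The first term tends to $0$ because $\chi\tilde v$ is a continuous compactly-supported form and $S_j\to S$ weakly. The second term is supported in $U$ and bounded by $C\|\chi\|_\infty\|v\|_\infty\cdot C_m(U)\le C'\varepsilon$, using the definition~(\ref{18}) of $C_m$ together with the uniform mass bound of $S_j$ from Lemma~\ref{lem2}. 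Letting $\varepsilon\to 0$ completes the inductive step and hence the proof. The delicate point throughout is the uniform-in-$j$ capacity control of the set where the $v_j$'s fail to be continuous, which is precisely what Theorem~\ref{th1} supplies.
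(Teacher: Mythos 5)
Your approach is genuinely different from the paper's. The paper fixes a test function $\varphi\in C^\infty_0$, writes it as $\varphi_1-\varphi_2$ with $\varphi_i$ smooth $m$-subharmonic, integrates by parts to get $\int\eta u_j(\Delta u_j)^{m-1}\wedge\Delta\varphi_i\wedge\beta^{n-m}$, and then inducts downward on the number of ``wild'' $\Delta u_j$ factors, replacing them one at a time by smooth $\Delta\psi_i$'s; all currents in the paper's induction are top-degree $(2n)$-currents, and the key tool is a monotonicity sandwich $\int\eta u_k(\Delta u_j)^{m-1}\cdots \le \int \eta u_j(\Delta u_j)^{m-1}\cdots \le \int\eta u(\Delta u_j)^{m-1}\cdots$ combined with quasicontinuity. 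Your $(\star_p)$ instead tracks non-top-degree currents $v_j(\Delta w_j)^{p-1}\wedge\beta^{n-m}$ with two independent varying sequences and uses a direct splitting $v_jS_j-vS = v_j(S_j-S)+(v_j-v)S$. That decomposition and the Tietze/capacity mechanism are attractive, but there are two genuine gaps.

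The central gap is the mass estimate $\|S_j\|_U\lesssim C_m(U)$. The mass of $S_j=(\Delta w_j)^{p-1}\wedge\beta^{n-m}$ on $U$ is $\int_U(\Delta w_j)^{p-1}\wedge\beta^{n-p+1}$, which involves only $p-1\le m-1<m$ factors of $\Delta w_j$; this is not what the definition (\ref{18}) of $C_m$ bounds, since (\ref{18}) concerns $\int_U(\Delta u)^m\wedge\beta^{n-m}$ with exactly $m$ factors. Citing Lemma~\ref{lem2} does not help either: the Chern--Levine--Nirenberg estimate gives a uniform \emph{absolute} bound, not smallness on small-capacity sets. To make your argument go through you would need a Cegrell/mixed-Hessian inequality of the form $\int_U(\Delta w)^{p-1}\wedge\beta^{n-p+1}\le\bigl(\int_U(\Delta w)^m\wedge\beta^{n-m}\bigr)^{(p-1)/m}\bigl(\int_U\beta^n\bigr)^{(m-p+1)/m}$, which is not stated in the paper and would have to be proved separately in this quaternionic setting. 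Without it, ``$\le C'\varepsilon$'' in your inductive step is unsupported.

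There is also a smaller slip in the triangle inequality: you write $|\int\chi\wedge v_j(S_j-S)|\le|\int\chi\wedge\tilde v(S_j-S)|+|\int_U\chi\wedge(v_j-\tilde v)(S_j-S)|$, but $\tilde v$ is a Tietze extension of $v|_{\Omega\setminus U}$, not of $v_j$, so $v_j-\tilde v$ does not vanish on $\Omega\setminus U$; the integral over $\Omega\setminus U$ has been silently dropped. This is fixable (on $\Omega\setminus U$ the $v_j$ and $v$ are continuous, so $v_j\nearrow v$ uniformly on compacts by Dini and that piece tends to $0$ against the CLN-bounded masses of $S_j-S$), but it should be said. Finally, a minor point: Lemma~\ref{lem4} as stated has exactly $m$ factors $\Delta v^i$, whereas you invoke it for $p-1<m$ factors; the same proof works, but it is not literally the cited lemma.
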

\begin{proof}
We need to prove that for any $\varphi \in C_{0}^{\infty}(\Omega)$,
$$\;\; \displaystyle{\lim_{j\longrightarrow \infty}\int_{\Omega}\varphi(\Delta u_{j})^{m}\wedge \beta^{n-m}=\int_{\Omega}\varphi(\Delta u)^{m}\wedge \beta^{n-m}}.$$
Take $ \eta\in C_{0}^{\infty}(\Omega)$ with $ \eta =1$ on the support of $\varphi$ and $ \varphi_{1},\varphi_{2}\in \mathcal{SH}_{m}\cap C^{\infty}$ with $\varphi=\varphi_{1}-\varphi_{2}$ (this can be
proved in the same way as the complex case). Then by (\ref{16}) we have
$$ \displaystyle{ \int_{\Omega}\varphi(\Delta u_{j})^{m} \wedge \beta^{n-m}}  = \displaystyle{ \int_{ \Omega}u_{j}(\Delta u)^{m-1}\wedge \Delta \varphi \wedge \beta^{n-m}}\\
 =  \displaystyle{ \int_{\Omega} \eta u_{j}(\Delta u)^{m-1} \wedge( \Delta \varphi_{1}- \Delta \varphi_{2})\wedge \beta^{n-m}}.$$
By monotonicity  for $k\leq j$, we get
\begin{equation}\label{19}
\begin{array}{ll}
\displaystyle{\int}_{\Omega}\eta u_{k}(\Delta u_{j})^{m-1}\wedge\Delta\varphi_{1}\wedge \beta^{n-m}
&\leq \displaystyle{\int}_{\Omega}\eta u_{j}(\Delta u_{j})^{m-1}\wedge\Delta\varphi_{1}\wedge \beta^{n-m}\\
&\leq \displaystyle{\int}_{\Omega}\eta u(\Delta u_{j})^{m-1}\wedge\Delta\varphi\wedge \beta^{n-m},\\
\end{array}
\end{equation}
\begin{equation}\label{20}
\hbox{if}\;\; (\Delta u_{j})^{m-1}\wedge \Delta \varphi_{1}\wedge\beta^{n-m}\longrightarrow(\Delta u)^{m-1}\wedge \Delta \varphi_{1}\wedge\beta^{n-m}\;\;as\;j\longrightarrow\infty.
  \end{equation}
Then from the quasicontinuity of $u_{k}$ follows
\begin{equation}\label{21}
\displaystyle{\lim_{j\longrightarrow \infty}\int_{\Omega}\eta u_{k}(\Delta u_{j})^{m-1}\wedge\Delta\varphi_{1}\wedge \beta^{n-m}=\int_{\Omega}\eta u(\Delta u_{j})^{m-1}\wedge\Delta\varphi_{1}\wedge \beta^{n-m}}.
 \end{equation}
By (\ref{19}) and (\ref{21}) we have
$$\begin{array}{ll}
 \displaystyle{\int_{\Omega}\eta u_{k}(\Delta u)^{m-1}\wedge\Delta\varphi_{1}\wedge \beta^{n-m}}
& \leq  \displaystyle{\liminf_{j\longrightarrow\infty}} \displaystyle{\int_{\Omega}\eta u_{j}(\Delta u_{j})^{m-1}\wedge\Delta\varphi_{1}\wedge \beta^{n-m}}      \\
& \leq  \displaystyle{\limsup_{j\longrightarrow\infty}} \displaystyle{ \int_{\Omega}\eta u(\Delta u_{j})^{m-1}\wedge\Delta\varphi_{1}\wedge \beta^{n-m}}      \\
& =  \displaystyle{\int_{\Omega}\eta u(\Delta u_{j})^{m-1}\wedge\Delta\varphi_{1}\wedge \beta^{n-m}}.
\end{array}$$
where the last identity follows from the quasicontinuity of $u$.\\ By Lemma \ref{lem3}, we get that
$\displaystyle{ \lim_{k\longrightarrow\infty}}\displaystyle{\int_{\Omega}\eta u_{k}(\Delta u)^{m-1}\wedge\Delta\varphi_{1}\wedge \beta^{n-m}=\int_{\Omega}\eta u(\Delta u)^{m-1}\wedge\Delta\varphi_{1}\wedge \beta^{n-m}}.$  Thus we have $$\displaystyle{ \lim_{j\longrightarrow\infty}\int_{\Omega}\eta u_{j}(\Delta u_{j})^{m-1}\wedge\Delta\varphi_{1}\wedge \beta^{n-m}=\int_{\Omega}\eta u(\Delta u)^{m-1}\wedge\Delta\varphi_{1}\wedge \beta^{n-m}}.$$
Repeating this process for $\displaystyle{\int}_{\Omega}\eta u_{j}(\Delta u_{j})^{m-1}\wedge\Delta\varphi_{2}\wedge \beta^{n-m} $ provided that (\ref{20}) holds for $ \varphi_{2}.$\\ If  $(\Delta u_{j})^{m-2}\wedge\Delta\varphi\wedge\Delta\psi\wedge\beta^{n-m} \rightarrow (\Delta u)^{m-2}\wedge\Delta\varphi\wedge\Delta\psi\wedge\beta^{n-m} $ weakly as $j\longrightarrow\infty$ for any $ \varphi,\psi \in C^{\infty}(\Omega)$ holds,  we could do this as above.
From Lemma \ref{lem4} and repeat this $n-1$ times follow  the proposition.
\end{proof}
\begin{thm}\label{th2}
Let $\Omega$ be a bounded domain in $\mathbb{H}^{n}$, let  $u,v \in \mathcal{SH}_{m}(\Omega)\cap L_{loc}^{\infty}$, if $\displaystyle{\liminf_{q\longrightarrow \xi}}(u(q)-v(q))\geq0 $ for $\xi \in \partial \Omega.$
  Then
    $\displaystyle{\int}_{\lbrace u< v\rbrace}(\Delta v)^{m}\wedge \beta^{n-m} \leq \displaystyle{\int}_{\lbrace u< v\rbrace}(\Delta u)^{m}\wedge \beta^{n-m}.$
\end{thm}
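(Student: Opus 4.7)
The strategy is the quaternionic analogue of the Bedford--Taylor comparison principle. For $\varepsilon > 0$, set $v_\varepsilon := v - \varepsilon$. The boundary hypothesis forces $U_\varepsilon := \{u < v_\varepsilon\}$ to be relatively compact in $\Omega$; pick a subdomain $\Omega'$ with $\overline{U_\varepsilon} \subset \Omega' \Subset \Omega$. By Proposition \ref{pro4}(6), the function $w_\varepsilon := \max(u, v_\varepsilon) \in \mathcal{SH}_m(\Omega) \cap L^\infty_{\mathrm{loc}}(\Omega)$ equals $v_\varepsilon$ on $U_\varepsilon$ and equals $u$ on $\Omega \setminus \overline{U_\varepsilon}$. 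Locality of the operator $(\Delta\,\cdot\,)^m \wedge \beta^{n-m}$ then yields $(\Delta w_\varepsilon)^m \wedge \beta^{n-m} = (\Delta v)^m \wedge \beta^{n-m}$ on $U_\varepsilon$ and $(\Delta w_\varepsilon)^m \wedge \beta^{n-m} = (\Delta u)^m \wedge \beta^{n-m}$ on $\Omega \setminus \overline{U_\varepsilon}$, so the signed measure $(\Delta w_\varepsilon)^m \wedge \beta^{n-m} - (\Delta u)^m \wedge \beta^{n-m}$ is supported in $\overline{U_\varepsilon} \subset \Omega'$.

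The core ingredient is the mass equality
$$\int_{\Omega'} (\Delta w_\varepsilon)^m \wedge \beta^{n-m} = \int_{\Omega'} (\Delta u)^m \wedge \beta^{n-m}.$$
To prove it, expand the difference via the telescoping identity
$$(\Delta w_\varepsilon)^m - (\Delta u)^m = \sum_{j=0}^{m-1} \Delta(w_\varepsilon - u) \wedge (\Delta w_\varepsilon)^j \wedge (\Delta u)^{m-1-j},$$
choose $\chi \in C_0^\infty(\Omega)$ with $\chi \equiv 1$ on a neighborhood of $\overline{\Omega'}$, and apply the Stokes-type Lemma \ref{lem1} twice (since $\Delta = d_0 d_1$ and the currents $(\Delta w_\varepsilon)^j \wedge (\Delta u)^{m-1-j} \wedge \beta^{n-m}$ are closed) to transfer both derivatives off $w_\varepsilon - u$ onto $\chi$. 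Each resulting integrand contains $\Delta\chi$, which is supported in $\Omega \setminus \overline{\Omega'}$, a region where $w_\varepsilon - u \equiv 0$; hence every term vanishes.

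Combining this with the locality identities and discarding the nonnegative mass carried by $(\Delta w_\varepsilon)^m \wedge \beta^{n-m}$ on the level set $\{u = v_\varepsilon\}$, one obtains
$$\int_{\Omega'}(\Delta u)^m \wedge \beta^{n-m} = \int_{\Omega'}(\Delta w_\varepsilon)^m \wedge \beta^{n-m} \geq \int_{U_\varepsilon}(\Delta v)^m \wedge \beta^{n-m} + \int_{\Omega' \setminus \overline{U_\varepsilon}}(\Delta u)^m \wedge \beta^{n-m},$$
whence $\int_{\overline{U_\varepsilon}}(\Delta u)^m \wedge \beta^{n-m} \geq \int_{U_\varepsilon}(\Delta v)^m \wedge \beta^{n-m}$. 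Since $\overline{U_\varepsilon} \subset \{u \leq v_\varepsilon\} \subset \{u < v\}$, the left side is at most $\int_{\{u<v\}}(\Delta u)^m \wedge \beta^{n-m}$. Letting $\varepsilon \to 0^+$, the sets $U_\varepsilon$ increase to $\{u<v\}$ and monotone convergence applied to the nonnegative Radon measure $(\Delta v)^m \wedge \beta^{n-m}$ delivers the required inequality.

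The principal technical obstacle is justifying the integration-by-parts step, since $w_\varepsilon - u$ is only bounded and Lemma \ref{lem1} is stated for smooth forms. The remedy is to approximate $u$ and $v$ from above by decreasing sequences of smooth $m$-subharmonic functions provided by Proposition \ref{pro4}(3), perform the Stokes manipulation at the smooth level, and pass to the limit using the continuity of the nonlinear operators on decreasing sequences from Propositions \ref{pro11} and \ref{pro3} (and the Chern--Levine--Nirenberg estimate of Lemma \ref{lem2} to control masses uniformly). Verifying that the identities are preserved through these limits is the only delicate point; everything else is bookkeeping.
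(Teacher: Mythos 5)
Your high-level strategy --- replace $u$ by $w_\varepsilon=\max(u, v-\varepsilon)$, prove mass conservation on a subdomain where $w_\varepsilon$ and $u$ agree near the boundary, and exploit locality of $(\Delta\,\cdot\,)^m\wedge\beta^{n-m}$ on sublevel sets --- is the classical Bedford--Taylor route and is essentially the content of \cite[Theorem 5.3]{LW}, which the paper invokes directly for the continuous case. The telescoping decomposition and the double application of the Stokes-type Lemma \ref{lem1} give the mass equality correctly when the data are smooth. However, the locality step has a genuine gap once $u,v$ are merely bounded $m$-subharmonic: the set $U_\varepsilon=\{u<v-\varepsilon\}$ is in general \emph{not open}, because $v-u$ is neither upper nor lower semicontinuous for u.s.c.\ $u$ and $v$. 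Hence the asserted identity $(\Delta w_\varepsilon)^m\wedge\beta^{n-m}=(\Delta v)^m\wedge\beta^{n-m}$ ``on $U_\varepsilon$'' is only justified on $\mathrm{Int}(U_\varepsilon)$, and the Borel layer $U_\varepsilon\setminus\mathrm{Int}(U_\varepsilon)$ may carry positive mass for $(\Delta v)^m\wedge\beta^{n-m}$. The same non-openness makes your inclusion $\overline{U_\varepsilon}\subset\{u\le v-\varepsilon\}$ false in general, since a u.s.c.\ $u$ can jump up at a limit point of $U_\varepsilon$.

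The smoothing ``remedy'' sketched in your last paragraph does not by itself repair this. If $u_j\searrow u$, $v_j\searrow v$ are continuous decreasing approximants, the sets $\{u_j<v_j-\varepsilon\}$ are open but have no monotonicity in $j$ (one sequence pushes the set up, the other pushes it down), so there is no direct way to pass the sublevel-set inequality to the limit. What the paper actually does --- and this is where the proof's content lies, not in ``bookkeeping'' --- is to first shift $u$ by $2\delta$ so $E=\{u<v+\delta\}$ is relatively compact, build continuous approximants with $u_k\ge v_j$ outside $\mathrm{Int}(E)$, use the quasicontinuity Theorem \ref{th1} to excise an open set $G_\varepsilon$ of capacity less than $\varepsilon$, replace $v$ by a continuous Tietze extension $f$ agreeing with $v$ off $G_\varepsilon$ so that $\{u_k<f\}$ is open and differs from $\{u_k<v\}$ only inside $G_\varepsilon$, and then close the chain with Fatou's lemma and the Chern--Levine--Nirenberg estimate (Lemma \ref{lem2}) to bound the error contributed by $G_\varepsilon$. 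Your proposal does not engage with the quasicontinuity/Tietze/capacity mechanism at all, so as written it establishes the statement only for continuous (or smooth) $u,v$, not for $u,v\in\mathcal{SH}_{m}(\Omega)\cap L_{loc}^{\infty}$.
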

\begin{proof}
Assume first that  $u, v$ are continuous on $\Omega$.
 Since $u, v$ are continuous, we can suppose that $\Omega =\{u< v\}$ and $\displaystyle{ \liminf_{q\longrightarrow \xi}}(u(q)-v(q))=0$. By \cite[Theorem 5.3]{LW} we have $\displaystyle{\int_{ \Omega} (\Delta v)^{m}\wedge \beta^{n-m}} \leq \displaystyle{\int_{ \Omega} (\Delta u)^{m}\wedge \beta^{n-m}}.$
 By replacing $ u$ by $u+2\delta$ and taking $\delta\longrightarrow 0$, we can assume that for any $\zeta\in\partial\Omega\;,\;\displaystyle{\liminf_{\zeta\longleftarrow q \in \Omega}}(u(q)-v(q))\geq 2\delta > 0.$
So $E =\lbrace u < v+\delta \rbrace$ is a relatively compact subset in $\Omega$. Then we can find sequences $(u_{j})_{j},(v_{j})_{j}\subseteq\mathcal{SH}_{m}(\Omega)\cap C(\Omega_{1})$
such that $ u_{j} \searrow u, v_{j} \searrow v $ and $ u_{k} \geq v_{j}$ for all $j\geq k$ on $\Omega_{1}\setminus \ Int(E)$, where $\Omega_{1}$ is a neighborhood of $\overline{E}$. Take
$M> 2\max\lbrace \parallel u\parallel_{\Omega},\parallel v\parallel_{\Omega},\parallel u_{1}\parallel_{ \Omega_{1}},\parallel v_{1}\parallel_{ \Omega_{1}}\rbrace$ and apply the first part of the proof  to $u_{k}, v_{j}, \Omega_{1}$ to get
\begin{equation}\label{23}
\displaystyle{\int_{\lbrace u_{k}\leq v_{j}\rbrace} (\Delta v_{j})^{m}\wedge \beta^{n-m}}\leq\displaystyle{\int_{\lbrace u_{k}\leq v_{j}\rbrace} (\Delta u_{k})^{m}\wedge \beta^{n-m}}
 \end{equation}
for $j\geq k$.\\
 Take $\varepsilon> 0$, by Theorem \ref{th1} there exists an open set $G_{\varepsilon}\subseteq \Omega_{1}$ such that $C(G_{\varepsilon},\Omega_{1}) < \varepsilon$, and
$u,v$ are continuous on $\Omega_{1}\ G_{\varepsilon}$, where $C(G_{\varepsilon},\Omega_{1})$ is quaternionic capacity of $G_{\varepsilon}$ in $\Omega_{1}$ defined by (\ref{18}). By
Tietze's extension theorem, there exists $f\in C(\Omega_{1})$ such that $f\mid_{(\Omega_{1}\setminus G_{\varepsilon})} = v\mid_{(\Omega_{1} \setminus G_{\varepsilon})}$.\\
Note that $\lbrace u_{k}< f \rbrace\cup G_{\varepsilon}=\lbrace u_{k}<v\rbrace \cup G_{\varepsilon}$. By Fatou’s lemma and (\ref{23}) we have
\begin{equation}\label{24}
\begin{array}{ll}
\displaystyle{\int_{\lbrace u_{k}< v\rbrace} (\Delta v)^{m}\wedge \beta^{n-m}}&\leq  \displaystyle{\int_{\lbrace u_{k}< f\rbrace} (\Delta v)^{m}\wedge \beta^{n-m}}+\displaystyle{\int_{ G_{\varepsilon}} (\Delta v)^{m}\wedge \beta^{n-m}}\\
&\leq  \displaystyle{\liminf_{j\longrightarrow\infty}\int_{\lbrace u_{k}< f\rbrace} (\Delta v_{j})^{m}\wedge \beta^{n-m}} +\displaystyle{\int_{ G_{\varepsilon}} (\Delta v)^{m}\wedge \beta^{n-m}}\\
&\leq  \displaystyle{\liminf_{j\longrightarrow\infty}\bigg[\int_{\lbrace u_{k}< v_{j}\rbrace} (\Delta v_{j})^{m}\wedge \beta^{n-m}} +\displaystyle{\int_{ G_{\varepsilon}} (\Delta v_{j})^{m}\wedge \beta^{n-m}\bigg]}\\ &\;\;\;\;\;\;+\displaystyle{\int_{ G_{\varepsilon}} (\Delta v)^{m}\wedge \beta^{n-m}}\\
&\leq  \displaystyle{\lim_{j\longrightarrow\infty}\int_{\lbrace u_{k}<v_{j}\rbrace} (\Delta u_{k})^{m}\wedge \beta^{n-m}}+2M^{m}C(G_{\varepsilon},\Omega_{1})\\ &\leq  \displaystyle{\int_{\lbrace u_{k}<v\rbrace} (\Delta u_{k})^{m}\wedge \beta^{n-m}}+2M^{m}\varepsilon.
\end{array}
\end{equation}
On the other hand, we have
\begin{equation}\label{25}
\begin{array}{ll}
\displaystyle{\int_{\{u \leq v\}}(\Delta u)^{m}\wedge \beta^{n-m}} &\geq \displaystyle{\int_{\{u\leq v\}\cap G_{\varepsilon}}(\Delta u)^{m}\wedge \beta^{n-m}} \\
&\geq \displaystyle{\limsup_{k\longrightarrow \infty}}\displaystyle{\int_{\{u \leq v\}\cap G_{\varepsilon}}(\Delta u_{k})^{m}\wedge \beta^{n-m}}\\
& \geq \displaystyle{\limsup_{k\longrightarrow\infty}}\Big[\displaystyle{\int_{\{u \leq v\}}(\Delta u_{k})^{m}\wedge \beta^{n-m}}-\displaystyle{\int_{{G_{\varepsilon}}}(\Delta u_{k})^{m}\wedge \beta^{n-m}}\Big]\\
&\geq\displaystyle{ \limsup_{k\longrightarrow\infty}}\displaystyle{\int_{\{u_{k} \leq v\}}(\Delta u_{k})^{m}\wedge \beta^{n-m}}-M^{m}\varepsilon.
\end{array}
 \end{equation}
Let $k\longrightarrow\infty$ in (\ref{24}), then the left hand side of (\ref{24}) converges to $\displaystyle{\int}_{\{u < v\}}(\Delta v)^{m}\wedge \beta^{n-m}$. It follows from (\ref{24}) and
(\ref{25}) that
$\displaystyle{\int_{\{u < v\}}(\Delta v)^{m}\wedge \beta^{n-m}}\leq \displaystyle{\int_{\{u\leq v\}}(\Delta u)^{m}\wedge \beta^{n-m}}+3M^{m}\varepsilon.$
Since $\varepsilon$ is arbitrary, we get
\begin{equation}\label{26}
\displaystyle{\int_{\{u < v\}}(\Delta v)^{n}\wedge \beta^{n-m}}\leq \displaystyle{\int_{\{u \leq v\}}(\Delta v)^{n}\wedge \beta^{n-m}}.
 \end{equation}
Now replace $u$ by $u+\theta\;(\theta> 0)$ in (\ref{26}) to get $$
\displaystyle{\int_{\{u+\theta < v\}}(\Delta v)^{n}\wedge \beta^{n-m}}\leq\displaystyle{\int_{\{u+\theta \leq v\}}(\Delta( u+\theta))^{n}\wedge \beta^{n-m}}=\displaystyle{\int_{\{u+\theta \leq v\}}(\Delta u)^{n}\wedge \beta^{n-m}}.
$$
Note that $\{u+\theta < v\}\nearrow \{u< v\}$ as $\theta \searrow 0$, it follows the result.
\end{proof}
\begin{cor}\label{cor2}
Let $\Omega$ be a bounded domain in $\mathbb{H}^{n},$ let  $u,v \in \mathcal{SH}_{m}(\Omega)\cap L_{loc}^{\infty}$,
  if  $\displaystyle{ \liminf_{q\longrightarrow \xi}(u(q)-v(q))\geq0}$ for $\xi \in \partial\Omega$
  and $(\Delta u)^{m}\wedge \beta^{n-m} \leq (\Delta v)^{m}\wedge \beta^{n-m}$ in $ \Omega.$
 Then $u\geq v$ in $\Omega$.
\end{cor}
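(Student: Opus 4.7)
The plan is to upgrade Theorem \ref{th2} by adding a strictly $m$-subharmonic perturbation to $v$ so that the trivial integral equality the theorem produces turns into a strict inequality that can only be satisfied on a null set.

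First I would fix $R>0$ with $\overline{\Omega}\subset \overline{B(0,R)}$ and set $\rho(q):=\|q\|^{2}-R^{2}$. Since $\beta=\tfrac{1}{8}\Delta(\|q\|^{2})$, we have $\Delta\rho=8\beta\in \widehat{\Gamma}_{m}$, so $\rho\in \mathcal{SH}_{m}$ and $\rho\leq 0$ on $\overline{\Omega}$. For $\epsilon>0$ I define $\tilde v_{\epsilon}:=v+\epsilon\rho\in \mathcal{SH}_{m}(\Omega)\cap L^{\infty}_{loc}(\Omega)$. Because $\rho\leq 0$ on $\overline{\Omega}$, the boundary hypothesis is preserved:
\[
\liminf_{q\to \xi}\bigl(u(q)-\tilde v_{\epsilon}(q)\bigr)\geq \liminf_{q\to \xi}\bigl(u(q)-v(q)\bigr)\geq 0,\qquad \xi\in \partial\Omega.
\]

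Next, expanding $\Delta\tilde v_{\epsilon}=\Delta v+8\epsilon\beta$ by the binomial formula and invoking Proposition \ref{11} to see that each summand $(\Delta v)^{m-k}\wedge\beta^{n-m+k}$ is nonnegative, only the $k=0$ and $k=m$ terms are needed to obtain the strict gain
\[
(\Delta\tilde v_{\epsilon})^{m}\wedge\beta^{n-m}\geq (\Delta v)^{m}\wedge\beta^{n-m}+(8\epsilon)^{m}\beta^{n}.
\]
Applying Theorem \ref{th2} to the pair $(u,\tilde v_{\epsilon})$ and combining with the hypothesis $(\Delta u)^{m}\wedge\beta^{n-m}\leq (\Delta v)^{m}\wedge\beta^{n-m}$ gives
\[
(8\epsilon)^{m}\int_{\{u<\tilde v_{\epsilon}\}}\beta^{n}\leq \int_{\{u<\tilde v_{\epsilon}\}}\bigl[(\Delta u)^{m}-(\Delta v)^{m}\bigr]\wedge\beta^{n-m}\leq 0.
\]
Since $\beta^{n}$ is a positive multiple of Lebesgue measure, the set $\{u<\tilde v_{\epsilon}\}$ has Lebesgue measure zero.

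To finish, I would upgrade ``measure zero'' to ``empty'' using that $u,\tilde v_{\epsilon}$ are subharmonic by Proposition \ref{pro4}(5). At any $q_{0}$ with $u(q_{0})<\tilde v_{\epsilon}(q_{0})$, the mean value inequality for $\tilde v_{\epsilon}$ together with the upper semicontinuity of $u$ give $|B(q_{0},r)|^{-1}\int_{B(q_{0},r)}(\tilde v_{\epsilon}-u)\,dV\to \tilde v_{\epsilon}(q_{0})-u(q_{0})>0$ as $r\to 0$, hence $\{u<\tilde v_{\epsilon}\}$ has positive measure in every small ball around $q_{0}$, contradicting the previous step. Thus $u\geq \tilde v_{\epsilon}=v+\epsilon(\|q\|^{2}-R^{2})$ pointwise on $\Omega$, and letting $\epsilon\to 0$ concludes $u\geq v$. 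The conceptual obstacle is that Theorem \ref{th2} applied directly to $(u,v)$ only produces $\int_{\{u<v\}}(\Delta v)^{m}\wedge \beta^{n-m}=\int_{\{u<v\}}(\Delta u)^{m}\wedge \beta^{n-m}$, which carries no information; the strict perturbation by $\epsilon\rho$ injects the extra strictly positive term $(8\epsilon)^{m}\beta^{n}$ and is the key device making the argument go through.
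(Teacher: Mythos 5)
Your proof is correct and follows essentially the same route as the paper: perturb $v$ by a strictly $m$-subharmonic term $\varepsilon\rho$ that is nonpositive on $\overline{\Omega}$, apply Theorem \ref{th2} after a binomial expansion to gain the extra term $(8\varepsilon)^{m}\beta^{n}$, conclude that $\{u<\tilde v_{\varepsilon}\}$ is Lebesgue-null, upgrade to ``everywhere'' via subharmonicity, and let $\varepsilon\to0$. The only cosmetic differences are your choice $\rho=\|q\|^{2}-R^{2}$ (which is already nonpositive on $\overline{\Omega}$) in place of the paper's $\rho=\|q\|^{2}/8$ with a constant $-\delta$ subtracted, and your direct argument in place of the paper's proof by contradiction.
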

\begin{proof}
 Let $\rho(q) = \dfrac{\Vert q \Vert^{2}}{8}$, then $\Delta\rho(q) = \beta$. Define $v_{\varepsilon,\delta} = v + \varepsilon\rho -\delta$ with $\varepsilon,\delta > 0$ such that
$\varepsilon\rho -\delta < 0$, then $\displaystyle{ \liminf_{q\longrightarrow \xi}}(u(q)-v_{\varepsilon,\delta}(q))\geq0 $ for any $ \xi \in \partial \Omega$. If the set $\{u < v\}$ is nonempty, then
$\{u > v_{\varepsilon,\delta}\}$ is also nonempty for  $\varepsilon,\delta > 0$. The fact that any $ \mathcal{SH}_{m} $ function is subharmonic enables the
statement below: If $u_{1},u_{2} \in \mathcal{SH}_{m}(\Omega) $ and $ u_{1}=u_{2}$ almost everywhere in $\Omega$, then $u_{1}\equiv u_{2}$. So the Lebesgue
measure of the set $\{u < v_{\varepsilon,\delta}\}$ must be positive. Note that
$$\begin{array}{ll}
(\Delta v_{\varepsilon,\delta})^{m} &=(\Delta v)^{m} + (\Delta(\varepsilon\rho-\delta))^{m} +\sum_{i=1}^{m}\left(
    \begin{array}{c}
      m \\
      j \\
    \end{array}
  \right)(\Delta v)^{j}\wedge (\Delta(\varepsilon\rho -\delta))^{m-j}\\ &\geq(\Delta v)^{m} + (\Delta(\varepsilon\rho-\delta))^{m}\\ & =(\Delta v)^{m} + \varepsilon^{m}\beta^{m}.
\end{array}$$
Apply Theorem \ref{th2} to $v_{\varepsilon,\delta}$ and $u$, we  obtain
$$\begin{array}{ll}
\displaystyle{\int_{\{u < v_{\varepsilon,\delta}\}}(\Delta v)^{m}\wedge \beta^{n-m}}+ \displaystyle{\int_{\{u\leq v_{\varepsilon,\delta}\}}\varepsilon^{m}\beta^{n}}
& \leq \displaystyle{\int_{\{u < v_{\varepsilon,\delta}\}}(\Delta v_{\varepsilon,\delta})^{m}\wedge \beta^{n-m}}\\ & \leq \displaystyle{\int_{\{u < v_{\varepsilon,\delta}\}}(\Delta u)^{m}\wedge \beta^{n-m}}\\
&\leq \displaystyle{\int_{\{u < v_{\varepsilon,\delta}\}}(\Delta v)^{m}\wedge \beta^{n-m}},
\end{array}$$
which is a contradiction. Therefore the set $\{u < v\}$ is empty.
\end{proof}
\begin{cor}\label{cor3}
let  $u,v \in \mathcal{SH}_{m}(\Omega)\cap L_{loc}^{\infty}$, if $\displaystyle{\liminf_{q\longrightarrow \xi}(u(q)-v(q))=0}$ for $ \xi \in \partial\Omega
   $ and $(\Delta v)^{m}\wedge \beta^{n-m} = (\Delta u)^{m}\wedge \beta^{n-m}\;\; in\; \Omega.$  Then $u= v$ in $\Omega.$
\end{cor}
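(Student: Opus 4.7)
The plan is to derive this essentially for free from Corollary \ref{cor2}, which is the comparison/domination principle already in hand. The strategy is to apply Corollary \ref{cor2} twice, once with the pair $(u,v)$ and once with the pair $(v,u)$, interpreting the boundary hypothesis symmetrically so that both $\liminf_{q\to\xi}(u(q)-v(q))\geq 0$ and $\liminf_{q\to\xi}(v(q)-u(q))\geq 0$ hold on $\partial\Omega$.

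First I would observe that the Hessian equality $(\Delta u)^{m}\wedge\beta^{n-m}=(\Delta v)^{m}\wedge\beta^{n-m}$ gives, trivially, both inequalities needed to invoke Corollary \ref{cor2}: namely $(\Delta u)^{m}\wedge\beta^{n-m}\leq(\Delta v)^{m}\wedge\beta^{n-m}$ and the reverse. Combined with the boundary vanishing condition, a first application of Corollary \ref{cor2} to the pair $(u,v)$ yields $u\geq v$ in $\Omega$, and a second application, with the roles of $u$ and $v$ exchanged, yields $v\geq u$ in $\Omega$. Combining these two inequalities gives $u=v$ pointwise in $\Omega$, which is the desired conclusion.

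There is essentially no obstacle: the whole argument is a two-line symmetry/bootstrap from Corollary \ref{cor2}. The only mild subtlety is the interpretation of the boundary condition. The hypothesis as stated ($\liminf(u-v)=0$) must be read as providing the symmetric boundary vanishing needed, i.e.\ that $u-v$ tends to $0$ at $\partial\Omega$ from both sides, so that Corollary \ref{cor2} is applicable to both pairs $(u,v)$ and $(v,u)$. Under this natural reading the proof reduces to two direct invocations of the comparison principle, and no new analytic input (such as a further use of quasicontinuity or the regularization $\rho=\|q\|^{2}/8$ employed inside the proof of Corollary \ref{cor2}) is required.
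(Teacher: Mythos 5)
Your proof is correct and coincides with the paper's, which likewise concludes by simply invoking Corollary~\ref{cor2} twice (once for the pair $(u,v)$ and once for $(v,u)$). Your remark that the hypothesis $\liminf_{q\to\xi}(u-v)=0$ must in effect be read as $u-v\to 0$ at $\partial\Omega$ is well taken: $\liminf=0$ by itself only yields $\liminf(u-v)\geq 0$, not the symmetric bound $\liminf(v-u)\geq 0$ needed for the second application, so you have identified an imprecision in the corollary's wording rather than a defect in the argument.
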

\begin{proof}
Apply Corollary \ref{cor2} to get the result.
\end{proof}
\begin{pro}\label{pro6}
 For $u,v \in \mathcal{SH}_{m}\cap L_{loc}^{\infty}$, we have
 $$(\Delta \max \lbrace u,v \rbrace )^{m}\wedge \beta^{n-m}\geq \chi_{\lbrace u>v \rbrace}(\Delta u)^{m}\wedge \beta^{n-m}+ \chi_{\lbrace u\leq v \rbrace}(\Delta v)^{m}\wedge \beta^{n-m} $$ where $\chi_{A}$ is the characteristic function of a set $A.$
 \end{pro}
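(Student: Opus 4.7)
The plan is to establish the inequality first in the smooth category, where the relevant level sets are genuinely open, and then to extend to the full $L^\infty_{loc}$ setting by standard regularization. An $\varepsilon$-shift will be used to transfer the coincidence set $\lbrace u=v\rbrace$ from the open set $\lbrace u<v\rbrace$ into the closed set $\lbrace u\leq v\rbrace$ appearing in the statement.

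First I would assume $u,v \in \mathcal{SH}_m\cap C^2(\Omega)$. Then $\lbrace u>v\rbrace$ and $\lbrace u<v\rbrace$ are open subsets of $\Omega$, and on each of them $\max\lbrace u,v\rbrace$ coincides identically with $u$ or with $v$. Since the Hessian operator is local, as currents restricted to these open sets $(\Delta\max\lbrace u,v\rbrace)^m\wedge\beta^{n-m}$ equals $(\Delta u)^m\wedge\beta^{n-m}$ on $\lbrace u>v\rbrace$ and $(\Delta v)^m\wedge\beta^{n-m}$ on $\lbrace u<v\rbrace$. This immediately gives the weaker inequality
\begin{equation*}
(\Delta\max\lbrace u,v\rbrace)^m\wedge\beta^{n-m}\geq \chi_{\lbrace u>v\rbrace}(\Delta u)^m\wedge\beta^{n-m}+\chi_{\lbrace u<v\rbrace}(\Delta v)^m\wedge\beta^{n-m}.
\end{equation*}

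To absorb the coincidence set into the $v$-term, I would reapply the above observation with $u-\varepsilon$ in place of $u$, using $\Delta(u-\varepsilon)=\Delta u$, to obtain
\begin{equation*}
(\Delta\max\lbrace u-\varepsilon,v\rbrace)^m\wedge\beta^{n-m}\geq \chi_{\lbrace u-\varepsilon>v\rbrace}(\Delta u)^m\wedge\beta^{n-m}+\chi_{\lbrace u-\varepsilon<v\rbrace}(\Delta v)^m\wedge\beta^{n-m}.
\end{equation*}
Letting $\varepsilon\downarrow 0$, we have $u-\varepsilon\uparrow u$, hence $\max\lbrace u-\varepsilon,v\rbrace\uparrow\max\lbrace u,v\rbrace$, so by Proposition \ref{pro5} the left-hand side converges weakly to $(\Delta\max\lbrace u,v\rbrace)^m\wedge\beta^{n-m}$. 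On the right, $\chi_{\lbrace u-\varepsilon>v\rbrace}\uparrow\chi_{\lbrace u>v\rbrace}$ and $\chi_{\lbrace u-\varepsilon<v\rbrace}\downarrow\chi_{\lbrace u\leq v\rbrace}$; testing against any non-negative $\varphi\in C_{0}(\Omega)$ and applying monotone convergence separately to the two fixed Radon measures $(\Delta u)^m\wedge\beta^{n-m}$ and $(\Delta v)^m\wedge\beta^{n-m}$ delivers the inequality in the smooth case.

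For general $u,v \in \mathcal{SH}_m\cap L^\infty_{loc}(\Omega)$, I would pass to the limit in the smooth inequality applied to the standard radial convolutions $u_j=u\ast\rho_{1/j}\searrow u$ and $v_j=v\ast\rho_{1/j}\searrow v$, which are smooth and $m$-subharmonic by Proposition \ref{pro4}(3). The left-hand side converges weakly to $(\Delta\max\lbrace u,v\rbrace)^m\wedge\beta^{n-m}$ by Proposition \ref{pro3}. Off the coincidence set $\lbrace u=v\rbrace$ one checks pointwise that $\chi_{\lbrace u_j>v_j\rbrace}\to\chi_{\lbrace u>v\rbrace}$ and $\chi_{\lbrace u_j\leq v_j\rbrace}\to\chi_{\lbrace u\leq v\rbrace}$; combined with the quasicontinuity Theorem \ref{th1} (applied to replace $u,v$ by continuous functions outside a set of arbitrarily small capacity) and Fatou's lemma, this suffices to pass the right-hand side to its limit. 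The hardest step will be this last passage, since the discontinuous indicator functions must be paired with a sequence of Hessian measures that is itself only weakly convergent and may a priori place mass near $\lbrace u=v\rbrace$; quasicontinuity is precisely the tool that rescues the argument, exactly as it does for the comparison principle proved in Theorem \ref{th2}.
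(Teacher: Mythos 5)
Your overall strategy—establish the inequality first for $C^2$ functions and then pass to the limit by regularization—is genuinely different from the paper's, which instead reduces matters to proving $\int_K(\Delta\max\{u,v\})^m\wedge\beta^{n-m}\geq\int_K(\Delta u)^m\wedge\beta^{n-m}$ for every compact $K\subset\{u\geq v\}$, working directly with regularizations of the original $u,v$. However, step 3 as written has a genuine gap: the $\varepsilon$-shift you performed in the smooth case in step 2 is silently undone when you pass $j\to\infty$. The culprit is the pointwise behaviour of $\chi_{\{u_j\leq v_j\}}$: on the coincidence set $\{u=v\}$ both $u_j(q)$ and $v_j(q)$ decrease to the same value $u(q)=v(q)$, so the sign of $u_j(q)-v_j(q)$ can oscillate indefinitely and $\chi_{\{u_j\leq v_j\}}(q)$ need not converge to $1$. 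The pointwise $\liminf$ of $\chi_{\{u_j\leq v_j\}}$ is therefore bounded below only by $\chi_{\{u<v\}}$, not $\chi_{\{u\leq v\}}$, and $\{u=v\}$ may carry positive $(\Delta v)^m\wedge\beta^{n-m}$-mass. So any Fatou-type argument in step 3—quite apart from the further difficulty that the Hessian measures $(\Delta v_j)^m\wedge\beta^{n-m}$ are only weakly convergent, so Fatou does not even directly apply—can at best recover the weaker inequality with $\chi_{\{u<v\}}$ on the $v$-term; the stronger claim with $\chi_{\{u\leq v\}}$ is not reached.

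The remedy is to swap the order of your two limit processes. Use regularization plus quasicontinuity to prove the \emph{weak} version $(\Delta\max\{u,v\})^m\wedge\beta^{n-m}\geq\chi_{\{u>v\}}(\Delta u)^m\wedge\beta^{n-m}+\chi_{\{u<v\}}(\Delta v)^m\wedge\beta^{n-m}$ for general bounded $u,v$, and only \emph{then} perform the $\varepsilon$-shift, which now sees the fixed Radon measures $(\Delta u)^m\wedge\beta^{n-m}$ and $(\Delta v)^m\wedge\beta^{n-m}$ on the right, so monotone and dominated convergence for the indicators go through exactly as in your step 2. Be aware that even the weak version's limit passage needs more than the sketch admits: after applying quasicontinuity with exceptional open set $G$, the set $\{u>v+\delta\}\setminus G$ where you can control the indicator is neither open nor closed, so you must pass to an open superset $W$ with $W\setminus G=\{u>v+\delta\}\setminus G$ (using continuity of $u,v$ on $\Omega\setminus G$), invoke the one-sided portmanteau inequality for the lower semicontinuous function $\varphi\chi_W$, and subtract off $C_m(G)$ to account for the piece inside $G$. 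The paper's reduction to compact $K\subset\{u\geq v\}$ with an open neighbourhood $L$ is designed precisely to avoid this open/closed bookkeeping, which is why its proof closes in fewer steps.
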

\begin{proof}
By changing the roles of $u$ and $v$, it suffices to prove that
$$\displaystyle{\int_{K}(\Delta(\max\lbrace u, v\rbrace)^{m}\wedge \beta^{n-m}} \geq \displaystyle{\int_{K}(\Delta u)^{m}\wedge \beta^{n-m}}$$
for every compact set $K \subseteq \{u \geq v\}$. Since $ u, v$ are bounded, we may
assume that $-1 \leq u, v \leq 0 $ and $-1 \leq u_{\varepsilon}, v_{\varepsilon} \leq 0,$ where $u_{\varepsilon} := u \ast \rho_{\varepsilon}$ is the
regularization of $u$. By Theorem \ref{th1} , we can assume that $G \subseteq \Omega$ is an open set
of small capacity such that $u, v$ are continuous on $\Omega \setminus G$. Then $u_{\varepsilon},v_{\varepsilon}$ converge
uniformly to $u, v$ on $ \Omega \setminus G$ respectively as $ \varepsilon$ tends to $0$. For any $\delta > 0$, we can
find an arbitrarily small neighborhood $L \;of\; K$ such that $u_{\varepsilon} > v_{\varepsilon} -\delta$ on $L\setminus G$
for $\varepsilon$ sufficiently small. Applying Proposition \ref{pro5}, we get $(\Delta u_{\varepsilon} )^{m}\wedge \beta^{n-m}$ converges weakly to $(\Delta u)^{m}\wedge \beta^{n-m}$,
and
\begin{equation*}
\begin{array}{ll}
 \displaystyle{\int_{K}(\Delta u)^{m}\wedge \beta^{n-m}}
&\leq \displaystyle{\liminf_{\varepsilon\longrightarrow 0 } \int_{L}(\Delta u_{\varepsilon})^{m}\wedge \beta^{n-m}}\\
& \leq \displaystyle{\liminf_{\varepsilon\longrightarrow 0}  \int_{G}(\Delta u_{\varepsilon})^{m}\wedge \beta^{n-m}+ \int_{L \setminus G}(\Delta u_{\varepsilon})^{m}\wedge \beta^{n-m}}\\
& \leq C(G,\Omega)+\displaystyle{\liminf_{\varepsilon\longrightarrow 0}  \int_{L \setminus G}(\Delta\max\lbrace u_{\varepsilon}+\delta,v_{\varepsilon}\rbrace)^{m}\wedge \beta^{n-m}}\\
&=C(G,\Omega)+\displaystyle{ \int_{L \setminus G}(\Delta\max\lbrace u+\delta,v\rbrace)^{m}\wedge \beta^{n-m}}.
\end{array}
\end{equation*}
The third inequality above follows from the definition of capacity and the
fact that $\max\lbrace u_{\varepsilon}+\delta, v_{\varepsilon}\rbrace =u_{\varepsilon}+\delta$ on a neighborhood of $L\setminus G$.
By taking $L$ very close to $K$ and $ C(G,\Omega)$ arbitrarily small, we have$\displaystyle{\int_{K}(\Delta\max\lbrace u+\delta, v\rbrace)^{m}\wedge \beta^{n-m}} \geq \displaystyle{\int_{K}(\Delta u)^{m}\wedge \beta^{n-m}}.$
 Let $\delta\longrightarrow 0 $ to get the above inequality .
\end{proof}
\section{ The Dirichlet problem}\label{sec3}
In this section, we will prove the first and the second order estimates. This will allows us  to solve the Dirichlet problem for quaternionic Hessian operator and prove the equivalent characterization of the maximality.
\subsection{ A priori estimates}

 Our goal is to prove the estimate $\Vert u \Vert_{C^{1,1}(\overline{B})}\leq C $, C depends only on $\varphi.$ We use
  some ideas from \cite{CKNS, CNS, T}  . We use the
notation $ u_{l}=\dfrac{\partial u}{\partial q_{l}},\;\;u_{\overline{l}}=\dfrac{\partial u}{\partial \overline{q}_{l}},$ where $$\frac{\partial u}{\partial \overline{q}_{l}}=\frac{\partial u}{\partial x_{4l}}+\mathrm{i}\frac{\partial u}{\partial x_{4l+1}}+\mathrm{j}\frac{\partial u}{\partial x_{4l+2}}+\mathrm{k}\frac{\partial u}{\partial x_{4l+3}},\;\;\;\; \frac{\partial u}{\partial q_{l}}=\frac{\partial u}{\partial x_{4l}}-\mathrm{i}\frac{\partial u}{\partial x_{4l+1}}-\mathrm{j}\frac{\partial u}{\partial x_{4l+2}}-\mathrm{k}\frac{\partial u}{\partial x_{4l+3}} $$ and $$ q_{l}=x_{4l}+\mathrm{i}x_{4l+1}+\mathrm{j}x_{4l+2}+\mathrm{k}x_{4l+3},\;\;l=0,\ldots,n-1.$$
 The real partial derivatives of $u$ will
be denoted by $u_{x_{4l}},u_{x_{4l+1}},u_{x_{4l+2}} \mbox{ and } u_{x_{4l+3}},\mbox{ and  
by } u_{\alpha}$  the real derivative of $u$ in direction $\alpha$ where $\alpha \in \{0,\ldots,4n-1\}$.
 Without loss of generality, we consider the equation
\begin{equation}\label{27}
F_{m}[u]:=\tilde{S}_{m}((u_{l\overline{s}}))=1\;\;\;\mbox{where}\;\; u_{l\overline{s}}=\frac{\partial^{2}u}{\partial \overline{q}_{s}\partial q_{l}}.
\end{equation}
By section \ref{sec2}, we have $(a^{l\overline{s}}) > 0$ and since $(u_{l\overline{s}})$ is diagonal then  $(a^{l\overline{s}})$ is diagonal. Thus the product of these is hyperhermitian matrix.
Computing the derivative of both sides of (\ref{27}) with respect to a real partial derivative at a direction $\alpha$ we get

\begin{equation}\label{28}
L(u_{\alpha })=0,\;\;\;\mbox{where}\;\;L:= a^{l\overline{s}}\partial_{l\overline{s}} \mbox{ is the linearized elliptic operator of}\; F_{m}, 
\end{equation}
 and $$(a^{l\overline{s}})=\mathcal{F}_{m}((u_{p\overline{r}})):=\Big(\frac{\partial\tilde{S}_{m} }{\partial u_{l\overline{s}}}((u_{p\overline{r}}))\Big).$$
Set
 \begin{equation}\label{29}
\begin{array}{ll}
 q_{l}u_{r}-\overline{q}_{r}u_{\overline{l}}
&:=\Big[\big( x_{4l}u_{x_{4r}} -x_{4r}u_{x_{4l}}\big) +\big(x_{4l+1}u_{x_{4r+1}}-x_{4r+1}u_{x_{4l+1}} \big )\\ &\;\;\;\; +\big( x_{4l+2}u_{x_{4r+2}}-x_{4r+2}u_{x_{4l+2}} \big )+\big(x_{4l+3}u_{x_{4r+3}}-x_{4r+3}u_{x_{4l+3}}\big)\Big]\\
&+i\Big[\big( x_{4l+1}u_{x_{4r}} -x_{4r}u_{x_{4l+1}}\big) +\big(x_{4l+3}u_{x_{4r+1}}-x_{4r+2}u_{x_{4l+3}} \big )\\ &\;\;\;\; -\big( x_{4l+2}u_{x_{4r+3}}-x_{4r+3}u_{x_{4l+2}} \big )-\big(x_{4l}u_{x_{4r+1}}-x_{4r+1}u_{x_{4l}}\big)\Big]\\
&+j \Big[\big( x_{4l+1}u_{x_{4r+3}} -x_{4r+3}u_{x_{4l+1}}\big) +\big(x_{4l+2}u_{x_{4r}}-x_{4r}u_{x_{4l+2}} \big )\\ &\;\;\;\; -\big( x_{4l+3}u_{x_{4r+1}}-x_{4r+1}u_{x_{4l+3}} \big )-\big(x_{4l}u_{x_{4r+2}}-x_{4r+2}u_{x_{4l}}\big)\Big]\\
&+k\Big[\big( x_{4l+2}u_{x_{4r+1}} -x_{4r+1}u_{x_{4l+2}}\big) +\big(x_{4l+3}u_{x_{4r}}-x_{4r}u_{x_{4l+3}} \big )\\ &\;\;\;\; -\big( x_{4l+1}u_{x_{4r+2}}-x_{4r+2}(u)_{x_{4l+1}} \big )-\big(x_{4l}u_{x_{4r+3}}-x_{4r+3}u_{x_{4l}}\big)\Big].
\end{array}
\end{equation}
 And let $ V $ be the associate vector field defined by:
$$\begin{array}{ll}
 V
&=\sum_{0 \leq l < r \leq n-1}  \Big[\Big(\big( x_{4l}\partial_{x_{4r}} -x_{4r}\partial_{x_{4l}}\big) +\big(x_{4l+1}\partial_{x_{4r+1}}-x_{4r+1}\partial_{x_{4l+1}} \big )\\ &\;\;\;\; +\big( x_{4l+2}\partial_{x_{4r+2}}-x_{4r+2}\partial_{x_{4l+2}} \big )+\big(x_{4l+3}\partial_{x_{4r+3}}-x_{4r+3}\partial_{x_{4l+3}}\big)\Big)\\
&+\mathrm{i}\Big(\big( x_{4l+1}\partial_{x_{4r}} -x_{4r}\partial_{x_{4l+1}}\big) +\big(x_{4l+3}\partial_{x_{4r+2}}-x_{4r+2}\partial_{x_{4l+3}} \big )\\ &\;\;\;\; -\big( x_{4l+2}\partial_{x_{4r+3}}-x_{4r+3}\partial_{x_{4l+2}} \big )-\big(x_{4l}\partial_{x_{4r+1}}-x_{4r+1}\partial_{x_{4l}}\big)\Big)\\
&+\mathrm{j} \Big(\big( x_{4l+1}\partial_{x_{4r+3}} -x_{4r+3}\partial_{x_{4l+1}}\big) +\big(x_{4l+2}\partial_{x_{4r}}-x_{4r}\partial_{x_{4l+2}} \big )\\ &\;\;\;\; -\big( x_{4l+3}\partial_{x_{4r+1}}-x_{4r+1}\partial_{x_{4l+3}} \big )-\big(x_{4l}\partial_{x_{4r+2}}-x_{4r+2}\partial_{x_{4l}}\big)\Big)\\
&+\mathrm{k}\Big(\big( x_{4l+2}\partial_{x_{4r+1}} -x_{4r+1}\partial_{x_{4l+2}}\big) +\big(x_{4l+3}\partial_{x_{4r}}-x_{4r}\partial_{x_{4l+3}} \big)\\ &\;\;\;\; -\big( x_{4l+1}\partial_{x_{4r+2}}-x_{4r+2}\partial_{x_{4l+1}} \big )-\big(x_{4l}\partial_{x_{4r+3}}-x_{4r+3}\partial_{x_{4l}}\big)\Big)\Big]
\end{array}
$$
Then, $ V $ is an infinitesimal generator of an element of $ \mathrm{Sp}(n) $. To see this, let $ M $ be the matrix defined as follows: For each pair $ (l, r) $ with $ 0 \leq l < r \leq n-1 $, $ M $ contains a block:

$$
M = \bigoplus_{0 \leq l < r \leq n-1} \begin{pmatrix} 0 & B_{lr} \\ -B_{lr}^\ast & 0 \end{pmatrix},
$$

where $ B_{lr} = I_4 + \mathbf{i} I_i + \mathbf{j} I_j + \mathbf{k} I_k $. Here:
 $ I_4 $ is the $ 4 \times 4 $ identity matrix.
 $ I_i, I_j, I_k $ are $ 4 \times 4 $ matrices representing quaternionic multiplication by $ \mathbf{i}, \mathbf{j}, \mathbf{k} $, respectively:

$$
I_i = \begin{pmatrix} 0 & -1 & 0 & 0 \\ 1 & 0 & 0 & 0 \\ 0 & 0 & 0 & -1 \\ 0 & 0 & 1 & 0 \end{pmatrix}, \quad
I_j = \begin{pmatrix} 0 & 0 & -1 & 0 \\ 0 & 0 & 0 & 1 \\ 1 & 0 & 0 & 0 \\ 0 & -1 & 0 & 0 \end{pmatrix}, \quad
I_k = \begin{pmatrix} 0 & 0 & 0 & -1 \\ 0 & 0 & -1 & 0 \\ 0 & 1 & 0 & 0 \\ 1 & 0 & 0 & 0 \end{pmatrix}.
$$

We have $ M^\ast + M = 0 $, where $ M^\ast $ denotes the quaternionic conjugate transpose of $ M $. Hence, $ M \in \mathrm{sp}(n) $ (see  \cite[Proposition 4.44]{ Ba}). Furthermore, the vector field $ V $ is  generated by  $ M $ via the correspondence:

$$
V = \sum_{i=0}^{4n-1} (M \mathbf{x})i \partial{x_i}.
$$

 Then the claim is proven. Hence, by  \cite{CNS}, it follows that $$L\Big(V\Big)[u]=VF_m[u]=V(1)=0.$$
Since
$
 L\Big(q_{l}u_{r}-\overline{q}_{r}u_{\overline{l}}\Big)
=a^{p\overline{s}}[q_{l}u_{r}-\overline{q}_{r}u_{\overline{l}}]_{p\overline{s}}$, it follows from the above that
\begin{equation}\label{30}
a^{p\overline{s}}[q_{l}u_{r}-\overline{q}_{r}u_{\overline{l}}]_{p\overline{s}}=0.
\end{equation}
The concavity of $\tilde{S}_{m}^{1/m}$ on $\tilde{\Gamma}_{m}$ implies that  $G:=log\tilde{S}_{m}$ is concave. Since  $\dfrac{\partial^{2}}{\partial x_{\alpha}\partial q_{l}}=\dfrac{\partial^{2}}{\partial q_{l} \partial x_{\alpha}}$ and $\dfrac{\partial^{2}}{\partial x_{\alpha}\partial \overline{q_{l}}}=\dfrac{\partial^{2}}{\partial \overline{q_{l}}\partial x_{\alpha}}$, with $\alpha\in\{0,\dots,4n-1\}$,
if we take   the logarithms of the both sides of (\ref{27}) and differentiate twice  with respect to $\frac{\partial}{\partial x_\alpha}$  we find first
$$\sum_{p,s}\dfrac{\partial G}{\partial u_{p\overline{s}}}u_{\alpha p\overline{s}}=0\;\;\; \mbox{ where }\;\;\; u_\alpha=\frac{\partial u}{\partial x_\alpha}.$$
and hence
$$\sum_{p,s,l,r}\dfrac{\partial^{2}G}{\partial u_{p\overline{s}}\partial u_{l\overline{r}}}u_{\alpha p\overline{s}} u_{\alpha l\overline{r}}+\sum_{p,s}\dfrac{\partial G}{\partial u_{p\overline{s}}}u_{\alpha \alpha p\overline{s}}=0 \mbox{ where } \;\;u_{\alpha \alpha}:= \frac{\partial^2 u}{\partial x^2_\alpha}.$$ 
Since $G$ is concave, then  the first term of the above equality is nonpositive and we get
\begin{equation}\label{31}
a^{p\overline{s}}u_{\alpha \alpha p\overline{s}}\geq 0.
\end{equation}
Let $B=B(0,1)$ be the unit ball in $\mathbb{H}^{n}$ and  $\varphi \in C^{\infty}(\overline{B})$ be harmonic in $B$. Let $C$  a different constant depending only on $\Vert \varphi\Vert_{C^{3,1}(\overline{B})}$
Set $\psi(q):=(\Vert q \Vert^{2}-1)/2$. Apply the comparison principal, we get for sufficiently big $C,\;\;
\varphi+C\psi \leq u \leq \varphi .$ From this and (\ref{27}) follow
\begin{equation}\label{32}
\Vert u \Vert_{C^{0,1}( \overline{B})} \leq C.
\end{equation}
Fix a coordinate system $(q_{0},\ldots,q_{n-1})\in \mathbb{H}^{n}$.
$$\begin{array}{ll}
 &\mbox{Let}\;\; t_{\alpha}= x_{4\alpha+3}\;\;\mbox{if}\;\;0\leq \alpha\leq n-1\;\;\mbox{and}\;\;
 t_{\alpha+n}= x_{4\alpha+2}\;\;\mbox{if}\;\;0\leq \alpha \leq n-1,\\
& \mbox{let} \;\;t_{\alpha+2n}= x_{4\alpha+1}\;\;\mbox{if}\;\;0\leq \alpha \leq n-1
 \;\;\mbox{;}\;\;t_{\alpha+3n}= x_{4\alpha}\;\;\mbox{if}\;\;\;0\leq \alpha \leq n-2,\\
 &\mbox{and}\;\; N = x_{4n-4}.\\
\end{array}$$

Now we will  estimate the double tangentiel derivative $u_{t_{\alpha}t_{\beta}}$ on $\partial B$.\\ For any point $\zeta$ in boundary of $B$, we may suppose
that $\zeta=(0,\ldots,0,1)$. Denote $t_{\alpha}, t_{\beta}$ the (real) tangential directions at $ \zeta $
  and  $ N $ the outer normal direction $x_{4n-4}$ axis.
 We set $$\displaystyle{x_{4n-4}= t_{4n-1}=\varrho(t^{'})=-\dfrac{1}{2}\sum_{k=0}^{4n-2}t_{k}^{2}+O(\vert t^{'}\vert ^{3})}\;\;\;\; where \;\;t^{'}=(t_{0},\ldots,t_{4n-2}).$$
We have $ u=\varphi$ on $\partial B$, then
$$(u-\varphi)(t^{'},\varrho(t^{'}))=0\;\;\; \mbox{where} \;\;t=(t^{'},t_{4n-1}).$$ By differentiation, at $\zeta$ we get
$$(u-\varphi)_{t_{\alpha}t_{\beta}}=-(u-\varphi)_{N}\varrho_{t_{\alpha}t_{\beta}}=(u-\varphi)_{N}\delta_{\alpha \beta} \;\;\mbox{for}\;\; t_{\alpha},t_{\beta}\;\;\; 0\leq \alpha,\beta\leq n-2,$$
thus
\begin{equation}\label{33}
u_{t_{\alpha}t_{\beta}} = \varphi_{t_{\alpha}t_{\beta}} + (u -\varphi)_{N}\delta_{\alpha \beta}.
\end{equation}
 From (\ref{32}) follows
 \begin{equation}\label{34}
 \vert u_{t_{\alpha}t_{\beta}}(\zeta)\vert \leq C\;\;;\zeta \in \partial B.
 \end{equation}

Next we estimate the mixed tangential-normal derivative $u_{tN}(\zeta_{0})$ for
a fixed $\zeta_{0} \in\partial B$.\\ As before we may assume that $\zeta_{0} =(0,\ldots,0,1)$, so that at $ \zeta_{0}$ we
have $N=\partial/\partial x_{4n-4}$. First assume that for some $0\leq l \leq n-2, \;\;t=\partial /\partial x_{4l}$ and set
$$\begin{array}{ll}
v &:= \dfrac{1}{2}Re\Big(q_{l}(u-\varphi)_{4n-4} - \overline{q}_{4n-4}(u-\varphi)_{\overline{l}}\Big)\\
&=\big( x_{4l}(u-\varphi)_{x_{4n-4}} -x_{4n-4}(u-\varphi)_{x_{4l}}\big) +\big(x_{4l+1}(u-\varphi)_{x_{4n-3}}-x_{4n-3}(u-\varphi)_{x_{4l+1}} \big )\\ &\;\;\;\; +\big( x_{4l+2}(u-\varphi)_{x_{4n-2}}-x_{4n-2}(u-\varphi)_{x_{4l+2}} \big )+\big(x_{4l+3}(u-\varphi)_{x_{4n-1}}-x_{4n-1}(u-\varphi)_{x_{4l+3}}\big).
\end{array}$$
Then $v=0$ on $\partial B,\;\;\vert v \vert \leq C $ on $\partial B(\zeta_{0},1)\cap \overline{B}$ and by (\ref{30})
$\pm a^{j\overline{k}}v_{j\overline{k}}\geq -C \displaystyle{\sum_{j}a^{j\overline{j}}}.$
Consider the barrier function $w := \pm v -C_{1}\vert q-\zeta_{0}\vert^{2} +C_{2}\psi,$ we
can choose constants $0\ll C_{1}\ll C_{2}$ under control so that $w\leq 0$ on
$ \partial( B \cap  B( \zeta_{0},1))$ and $a^{j\overline{k}}w_{j\overline{k}}\geq 0$ in $B \cap B( \zeta_{0},1).$ Therefore $w\leq 0$ in
 $B \cap B( \zeta_{0},1)$,
$$\vert v\vert\leq C_{1}\vert q-\zeta_{0}\vert^{2}-C_{2}\psi.$$
And it follows that $$\vert v_{x_{4n-4}}(\zeta_{0})\vert \leq C .$$ At $\zeta_{0}$ we have
$$v_{x_{4n-4}} = -(u-\varphi)_{x_{4l}} -(u-\varphi)_{x_{4l}x_{4n-4}},$$
and thus $$\vert u_{x_{4l}x_{4n-4}}(\zeta_{0})  \vert \leq C.$$

To estimate $u_{x_{4l+1}x_{4n-4}}$,
 we take the first imaginary component of vector part of
 \begin{equation}\label{35}
 q_{l}(u-\varphi)_{4n-4} - \overline{q}_{4n-4}(u-\varphi)_{\overline{l}}
 \end{equation}
  defined by:
  \begin{equation*}
   \begin{array}{ll}
&v: =\mathrm{i} \Big[\big( x_{4l+1}(u-\varphi)_{x_{4n-4}} -x_{4n-4}(u-\varphi)_{x_{4l+1}}\big) +\big(x_{4l+3}(u-\varphi)_{x_{4n-2}}-x_{4n-2}(u-\varphi)_{x_{4l+3}} \big)\\&\;\;\;\;\;\;-\big( x_{4l+2}(u-\varphi)_{x_{4n-1}}-x_{4n-1}(u-\varphi)_{x_{4l+2}} \big )-\big(x_{4l}(u-\varphi)_{x_{4n-3}}-x_{4n-3}(u-\varphi)_{x_{4l}}\big)\Big] .
\end{array}
  \end{equation*} and proceed similarly.

To estimate $u_{x_{4l+2}x_{4n-4}}$ and $u_{x_{4l+3}x_{4n-4}}$,
we choose respectively $v$ the second  and the third imaginary component of the vector part of (\ref{35}) and proceed similarly.

Let estimate $u_{x_{4n-3}x_{4n-4}},u_{x_{4n-2}x_{4n-4}}\;$ and $u_{x_{4n-1}x_{4n-4}}.$ Applying  
(\ref{30}) for $l=r=n-1$, we get $$ a^{p\overline{s}}\big[q_{n-1}u_{n-1}-\overline{q}_{n-1}u_{\overline{n-1}}\big]_{p\overline{s}}=0.$$ Then for $t=\partial/\partial x_{4n-3}$
 we consider
$$v^{i}= \mathrm{i}\Big[\big( x_{4n-3}(u-\varphi)_{x_{4n-4}} -x_{4n-4}(u-\varphi)_{x_{4n-3}}\big) +\big(x_{4n-1}(u-\varphi)_{x_{4n-2}}-x_{4n-2}(u-\varphi)_{x_{4n-1}}\big )\Big] ,$$
for $t=\partial/\partial x_{4n-2}$ and $t=\partial/\partial x_{4n-1}$ we consider
$$v^{j}= \mathrm{j}\Big[\big( x_{4n-2}(u-\varphi)_{x_{4n-4}} -x_{4n-4}(u-\varphi)_{x_{4n-2}}\big) +\big(x_{4n-3}(u-\varphi)_{x_{4n-1}}-x_{4n-1}(u-\varphi)_{x_{4n-3}}\big )\Big]  $$ and $$v^{k}= \mathrm{k}\big[( x_{4n-1}(u-\varphi)_{x_{4n-4}} -x_{4n-4}(u-\varphi)_{x_{4n-1}}) +(x_{4n-2}(u-\varphi)_{x_{4n-3}}-x_{4n-3}(u-\varphi)_{x_{4n-2}} )\big].$$
 We proceed similarly to
  obtain
\begin{equation}\label{36}
\vert u_{tN} (\zeta_{0})\vert \leq C.
\end{equation}

 Finally we turn to estimate the nomal-normal derivative
\begin{equation}\label{37}
\vert u_{n-1 \overline{n-1}}(\zeta_{0})\vert \leq C.
\end{equation}
Indeed, from (\ref{34}), (\ref{36}) and (\ref{31}) follow that all the
eigenvalues of the real Hessian matrix $D^{2}u$ are bounded from above by
$C$ in $B$. But since $u$ is  subharmonic, it implies that they must
 be bounded from below by $-(4n-1)C$. So it  remains to show (\ref{37}).
By (\ref{34}) and (\ref{36}) at $\zeta_{0}$ we may write
\begin{equation}\label{38}
1= u_{n-1\overline{n-1}}S_{m-1}^{'}(\zeta_{0})+O(1),
\end{equation}
where $S_{m-1}^{'}(\zeta_{0})=\tilde{S}_{m-1}((u_{p\overline{q}}(\zeta_{0}))^{'})$
 and $(u_{p\overline{q}})^{'}$
 denote the $(n - 1)\times(n - 1)$ matrix created by deleting the nth row
and nth column in $(u_{p\overline{q}}).$
 Following Blocki \cite{B} we consider  a smooth mapping\\
$$\Psi:\;\; (\overline{B}\cap\overline{B}(\zeta_{0},1))\times \mathbb{H}^{n}\longrightarrow \mathbb{H}^{n}$$  such that for every $q \in \overline{B}\cap\overline{B}(\zeta_{0},1)$ the mapping $\Psi_{q}= \Psi(q,.)$ is an
orthogonal isomorphism of $\mathbb{H}^{n}$, $\Psi_{\zeta}(\zeta_{0})=\zeta $ for $\zeta \in \partial B\cap \overline{B}(\zeta_{0},1)$
and $\Psi_{\zeta_{0}}$
 is the identity. We then have $$S^{'}_{m-1}(\zeta) = \tilde{S}_{m-1}(U(\zeta))\;\;\;for \;\;\zeta \in \partial B\cap \overline{B}(\zeta_{0},1), $$
 where by (\ref{33})  $$ U(\zeta)=R(\zeta)+ u_{N}(\zeta)I\;\;\mbox{ and}\;\;
R(\zeta):=\big((\varphi \circ \Psi_{\zeta})_{p\overline{q}}(\zeta_{0})\big)^{'}-\varphi_{N}(\zeta)I.$$
It is clear that $$ \big\Vert R \big\Vert_{C^{1,1}( \overline{B} \cap \overline{B}(\zeta_{0},1))} \leq C .$$
  Define the $(n-1)\times (n-1)$ positive definite matrix
$$\mathrm{G}_{0}:=\mathcal{F}_{m-1}(U(\zeta_{0}))= \Big(\dfrac{\partial \tilde{S}_{m-1}}{\partial u_{j\overline{k}}}(U(\zeta_{0}))\Big)\;\;\mbox{where}\;\; U(\zeta_{0})=\big((u_{p\overline{q}})(\zeta_{0})\big)^{'}.$$
As in \cite{T} we
may assume that  $S^{'}_{m-1}$, is minimized on $\partial B$  at $\zeta_{0},$
  then it follows from (\ref{81}) and (\ref{82}) that for $m \geq 2$
 \begin{equation*}
\begin{array}{ll}
tr( \mathrm{G}_{0}(U(\zeta)-U(\zeta_{0})))&=tr(\mathrm{G}_{0}(U(\zeta)))-tr(\mathrm{G}_{0}(U(\zeta_{0})))\\
&=tr\big[U(\zeta)\mathcal{F}_{m-1}(U(\zeta_{0}))\big]-tr\big[U(\zeta_{0})\mathcal{F}_{m-1}(U(\zeta_{0}))\big] \\
& \geq (m-1)\big[\tilde{S}_{m-1}(U(\zeta))\big]^{\frac{1}{m-1}} \big[\tilde{S}_{m-1}(U(\zeta_{0}))\big]^{\frac{m-2}{m-1}} -\tilde{S}_{m-1}(U(\zeta_{0})) \\
& \geq (m-1)\tilde{S}_{m-1}(U(\zeta_{0}))-\tilde{S}_{m-1}(U(\zeta_{0})) \\
& \geq (m-2)\tilde{S}_{m-1}(U(\zeta_{0}))\\
&\geq 0.
\end{array}
\end{equation*}
The case, $m=1$, is trivial.

 We will choose
$$v(\zeta):=  u_{x_{4n-4}}(\zeta)- u_{x_{4n-4}}(\zeta_{0})+ \langle D u(\zeta),\zeta-\zeta_{0}\rangle +(trG_{0})^{-1}tr[G_{o}(R(\zeta)-R(\zeta_{0})] \geq 0.$$
where $$Du=(u_{x_{0}},\ldots,u_{x_{4n-1}}),\; \zeta=(\zeta^{0},\ldots,\zeta^{4n-1})\;\mbox{and}\; \zeta_{0}=(0,\ldots,0,1,0,0,0).$$
Similarly as before, we define the barrier $w:= \pm v -C\vert q-\zeta_{0}\vert^{2} +C^{'}\psi.$
Differentiating equation (\ref{27}) we will find
$\vert a^{j\overline{k}}v_{j\overline{k}} \vert \leq C \displaystyle{\sum_{j}a^{j\overline{j}}}.$
Choosing $C\ll C^{'}$ under control, we get $w \leq 0 \;\; on\;\; \partial(B\cap B(\zeta_{0},1))$ and
$a^{j\overline{k}}w_{j\overline{k}} \geq 0$
 in $B\cap B(\zeta_{0},1) $.
 Therefore $w \leq 0$ in $B\cap B(\zeta_{0},1)$ and
$$u_{x_{4n-4}x_{4n-4}}(\zeta_{0})\leq C$$
hence by (\ref{36}) we obtain (\ref{37}).
Finally, we get
\begin{equation}\label{40}
 \Vert u \Vert_{C^{1,1}} (\overline{B}) \leq C
\end{equation}
\subsection{Main Theorem}
Now we are ready to prove our main theorem.
\begin{thm}\label{thbeforemain}
Let $B$ be a ball in $\mathbb{H}^{n}$, $\varphi$ continuous function on $\partial B$.\\ Then the following Dirichlet problem
\begin{equation*}
\left\{
     \begin{array}{lll}
       u\in\mathcal{SH}_{m}(B)\cap C(\overline{B}), \\
        (\Delta u)^m\wedge\beta^{n-m}=0,\\
        u=\varphi       \;\;\;\; \mbox{on} \;\;\;\; \partial B,
     \end{array}
   \right. \;\; \mbox{where} \;\;\; \beta=\dfrac{1}{8}\Delta(\parallel q \parallel^{2})
\end{equation*}
has a unique solution.
\end{thm}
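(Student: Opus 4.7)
The plan is to establish existence for smooth boundary data via the a priori estimate proved in Section 3.1 together with the vanishing-viscosity (continuity) method, and then to pass to continuous boundary data by a uniform stability argument based on Corollary 3.4. Uniqueness is immediate: if $u_{1},u_{2}$ are two such solutions, then Corollary 3.4 applied in both directions gives $u_{1}=u_{2}$.

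For the smooth-data case, assume $\varphi\in C^{\infty}(\partial B)$ and extend it to $\tilde\varphi\in C^{\infty}(\overline{B})$. For $\varepsilon>0$ consider the non-degenerate Dirichlet problem
\begin{equation*}
\tilde S_{m}\bigl((u^{\varepsilon}_{l\overline{s}})\bigr)=\varepsilon,\qquad u^{\varepsilon}|_{\partial B}=\tilde\varphi,
\end{equation*}
in the admissible cone $\tilde\Gamma_{m}$. On $\tilde\Gamma_{m}$ the linearization $L=a^{l\overline{s}}\partial_{l\overline{s}}$ is uniformly elliptic and $G=\log\tilde S_{m}$ is concave, and the barrier constructions of Section 3.1 furnish an estimate $\|u^{\varepsilon}\|_{C^{1,1}(\overline{B})}\leq C$ with $C$ depending only on $\|\tilde\varphi\|_{C^{3,1}(\overline{B})}$ and independent of $\varepsilon$. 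The continuity method combined with Evans–Krylov and Schauder estimates then produces smooth admissible solutions $u^{\varepsilon}$. Extracting a subsequence $u^{\varepsilon_{k}}\to u$ in $C^{1,\alpha}(\overline{B})$ and invoking Proposition \ref{pro11} yields a solution $u\in\mathcal{SH}_{m}(B)\cap C^{1,1}(\overline{B})$ of the homogeneous problem with $u|_{\partial B}=\varphi$.

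For general $\varphi\in C(\partial B)$, approximate uniformly by $\varphi_{j}\in C^{\infty}(\partial B)$ and let $u_{j}$ denote the smooth-data solutions produced above. Applying Corollary \ref{cor2} to the pairs $u_{j}$ and $u_{k}\pm\|\varphi_{j}-\varphi_{k}\|_{L^{\infty}(\partial B)}$ yields
\begin{equation*}
\|u_{j}-u_{k}\|_{L^{\infty}(\overline{B})}\leq\|\varphi_{j}-\varphi_{k}\|_{L^{\infty}(\partial B)},
\end{equation*}
so $\{u_{j}\}$ is Cauchy in $C(\overline{B})$ and converges uniformly to some $u\in C(\overline{B})$ with $u|_{\partial B}=\varphi$. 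Since $\mathcal{SH}_{m}$ is closed under uniform convergence, $u\in\mathcal{SH}_{m}(B)$; and Proposition \ref{pro11} yields $(\Delta u_{j})^{m}\wedge\beta^{n-m}\to(\Delta u)^{m}\wedge\beta^{n-m}$ weakly, so the homogeneous equation holds in the sense of currents.

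The main obstacle is the smooth step: one must verify that the $C^{1,1}$ bound (\ref{40}) is genuinely independent of the regularization parameter $\varepsilon$ in $\tilde S_{m}=\varepsilon$. Inspecting the barrier constructions of Section 3.1 shows that the constants there depend only on $\|\tilde\varphi\|_{C^{3,1}(\overline{B})}$ and the geometry of $B$, not on the positive right-hand side of (\ref{27}); this is precisely what allows the passage $\varepsilon\searrow 0$ to produce a solution of the genuinely degenerate equation before the continuous-data step is carried out.
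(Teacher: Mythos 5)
Your proof is correct in outline and follows essentially the same scaffolding as the paper: solve the non-degenerate problem $\tilde S_m=\varepsilon$ via the a priori $C^{1,1}$ estimate plus Evans--Krylov, pass to the degenerate limit, then handle continuous data by uniform approximation and the comparison principle. Uniqueness via Corollary~\ref{cor2} is exactly right, as is the Cauchy estimate $\|u_j-u_k\|_{L^\infty(\overline{B})}\le\|\varphi_j-\varphi_k\|_{L^\infty(\partial B)}$ in the final step.

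Where you diverge from the paper is in how you take the degenerate limit $\varepsilon\searrow 0$. You do it in two separate stages (first $\varepsilon\to 0$ for fixed smooth $\tilde\varphi$, extracting a $C^{1,\alpha}(\overline B)$-convergent subsequence from a uniform-in-$\varepsilon$ $C^{1,1}$ bound; then $\varphi_j\to\varphi$), whereas the paper does both limits simultaneously: it approximates $\varphi$ from below by smooth $\varphi_j$, solves $(\Delta u_j)^m\wedge\beta^{n-m}=\frac1j\beta^n$ with data $\varphi_j$, and derives
$u_k+j^{-1/m}\psi-\|\varphi_j-\varphi\|_{L^\infty(\partial B)}\le u_j\le u_k$ (for $k\ge j$, with $\psi(q)=|q-q_0|^2-R^2$) directly from Corollary~\ref{cor2} and the superadditivity of $\tilde S_m^{1/m}$. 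That yields uniform $C^0$ convergence without ever invoking a $C^{1,1}$ bound uniform in the right-hand side. This matters because the paper explicitly states that the a priori constant in~(\ref{40}) ``depends only on $a$ and $\varphi$'' -- it does not claim, and does not need, uniformity as $a\to 0$. Your step rests on the assertion that the barrier constants of Section~3.1 are independent of the positive right-hand side; that is plausible for constant RHS on a ball (the barriers and the interior estimate~(\ref{31}) only see $\|\tilde\varphi\|_{C^{3,1}}$ and the geometry), but you assert it rather than verify it, and the normal-normal estimate~(\ref{37}) in particular deserves a check that nothing degenerates as the linearized coefficients $(a^{j\overline k})$ approach the boundary of the cone. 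The cleaner route, and the one the paper takes, is to replace the $C^{1,\alpha}$-compactness argument by the comparison-principle estimate with the quadratic barrier $\psi$: it gives uniform $C^0$ convergence of $u^\varepsilon$ for free, and Proposition~\ref{pro11} then handles the passage in the equation.
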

 \begin{proof}
 Uniqueness is a consequence of Corollary \ref{cor2}. To show
the existence, first we assume that $\varphi$ is continuous and for a constant $a>0$
 we consider the Dirichlet problem
\begin{equation}\label{39}
\left\{
     \begin{array}{lll}
       u\in\mathcal{SH}_{m}(B)\cap C^{\infty}(B), \\
        (\Delta u)^m\wedge\beta^{n-m}=a\beta^{n},\\
        u=\varphi       \;\;\;\; \mbox{on} \;\;\;\; \partial B.
     \end{array}
   \right.\;\; \mbox{where} \;\;\; \beta=\dfrac{1}{8}\Delta(\parallel q \parallel^{2})
\end{equation}
By the Evans-Krylov theory see  \cite[Theorem 1]{CNS}, there exists a solution
of (\ref{39}) provided that we have an a priori bound
$\Vert u\Vert_{C^{1,1}(\overline{B})}\leq C$
where $C$ depends only on $a$ and $\varphi$.
Since we have proved  (\ref{40}), then  we can solve (\ref{00}). Let $\varphi$
be arbitrary continuous and approximate it from below by $\varphi_{j} \in C^{\infty}(\partial B).$ Let
$u_{j}$ be a solution of (\ref{39}) with $\varphi_{j}$ and $ a=\frac{1}{j}$. Let $\psi(q) =\vert q-q_{0}\vert^{2} -R^{2}$,
where $q_{0}$ is the center and $R$ the radius of $B$. For $k\geq j$ Corollary \ref{cor2}
gives
$u_{k} + j^{-1/ m}\psi -\Vert \varphi_{j}-\varphi \Vert _{L^{\infty}(\partial B)} \leq u_{j}\leq u_{k}$.
This implies that $u_{j}$ converges uniformly on $B$ to a certain $u$, which is a
solution by Proposition \ref{pro11}.
\end{proof}
\begin{rem} This result has been proven using other approaches and methods by  Harvey and Lawson (see \cite[Theorem 6.2]{HL1} and \cite[Theorem 14.8]{HL2})\footnote{We thank the authors for pointing this out to us and sending us their work.}.
\end{rem}
 \subsection{The maximality}
Now, we come to prove an equivalent characterization of the maximality in $\mathcal{SH}_{m}(\Omega),$ but first we recall the definition of maximal function.
\begin{defi}
Let $\Omega$ be an open set in $\mathbb{H}^{n}$, a function $u\in\mathcal{SH}_{m}(\Omega)$ is called maximal if \\ $v\in\mathcal{SH}_{m}(\Omega)$, $v\leq u$ outside a compact subset of $\Omega$ implies that $v\leq u$ in $\Omega.$
\end{defi}
In \cite{B}, Blocki proved that maximal functions $u\in\mathcal{SH}_{m}(\Omega)\cap C(\Omega)$ are precisely the solutions of the degenerate Dirichlet problem in the complex case.
Now we extend this characterization to  $\mathbb{H}^{n}.$
\begin{thm}
Let $\Omega$ be an open subset of $\mathbb{H}^{n}$ and $u\in\mathcal{SH}_{m}(\Omega)\cap C(\Omega).$ Then $u$ is maximal in $\Omega$ if and only if $(\Delta u)^{m}\wedge \beta^{n-m} =0.$
\end{thm}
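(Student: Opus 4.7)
The plan is to prove both implications using the comparison principle (Corollary \ref{cor2}) and the solvability of the homogeneous Dirichlet problem established in Theorem \ref{thbeforemain}.

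For the implication $(\Delta u)^{m}\wedge\beta^{n-m}=0\Rightarrow u$ is maximal: let $v\in\mathcal{SH}_{m}(\Omega)$ with $v\leq u$ outside a compact set $K\subset\Omega$. Choose an open set $\omega$ with $K\subset\omega$ and $\overline{\omega}\subset\Omega$; then $\partial\omega\subset\Omega\setminus K$, so $v\leq u$ in a neighborhood of $\partial\omega$ and in particular $\liminf_{q\to\xi}(u(q)-v(q))\geq 0$ for $\xi\in\partial\omega$. Since $(\Delta u)^{m}\wedge\beta^{n-m}=0\leq(\Delta v)^{m}\wedge\beta^{n-m}$, Corollary \ref{cor2} applied on $\omega$ gives $v\leq u$ on $\omega$; together with $v\leq u$ on $\Omega\setminus\omega$ this yields $v\leq u$ on $\Omega$, so $u$ is maximal.

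For the converse implication, assume $u$ is maximal; it suffices to show $(\Delta u)^{m}\wedge\beta^{n-m}=0$ on every ball $B\Subset\Omega$. Fix such a $B$. Since $u|_{\partial B}$ is continuous, Theorem \ref{thbeforemain} produces $v\in\mathcal{SH}_{m}(B)\cap C(\overline{B})$ with $(\Delta v)^{m}\wedge\beta^{n-m}=0$ in $B$ and $v=u$ on $\partial B$. Interchanging the roles of $u$ and $v$ in Corollary \ref{cor2} (the hypotheses $(\Delta v)^{m}\wedge\beta^{n-m}=0\leq(\Delta u)^{m}\wedge\beta^{n-m}$ and $\liminf_{q\to\xi}(v(q)-u(q))=0$ on $\partial B$ both hold) yields $v\geq u$ in $B$. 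Extend $v$ to $\Omega$ by setting it equal to $u$ off $B$; by Proposition \ref{pro4}(6) applied with $\omega=B$ (whose hypothesis $\limsup_{q\to\xi}v(q)\leq u(\xi)$ for $\xi\in\partial B$ holds by continuity and the boundary match), the function
\[W:=\begin{cases}\max\{u,v\} & \text{on } B,\\ u & \text{on }\Omega\setminus B,\end{cases}\]
belongs to $\mathcal{SH}_{m}(\Omega)$. Since $v\geq u$ on $B$, we in fact have $W=v$ on $B$ and $W=u$ off $B$; in particular $W=u$ outside the compact set $\overline{B}$. Maximality of $u$ then forces $W\leq u$ on $\Omega$, which combined with $W\geq u$ gives $W\equiv u$. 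Hence $v=u$ on $B$, and therefore $(\Delta u)^{m}\wedge\beta^{n-m}=(\Delta v)^{m}\wedge\beta^{n-m}=0$ on $B$.

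The crucial input is Theorem \ref{thbeforemain}, which supplies the required competitor $v$; the rest is a standard comparison-and-gluing argument. The only delicate point is that the extension $W$ remains $m$-subharmonic across $\partial B$ and coincides with $u$ outside a compact set so that the definition of maximality can actually be invoked—both handled cleanly by Proposition \ref{pro4}(6).
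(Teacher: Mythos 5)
Your proof is correct and follows essentially the same approach as the paper's: the easy direction is Corollary \ref{cor2} applied on a relatively compact subdomain, and the converse combines Theorem \ref{thbeforemain}, the comparison principle, and the gluing lemma Proposition \ref{pro4}(6). Your write-up is somewhat more explicit than the paper's terse argument (e.g., you spell out why the glued function $W$ is $m$-subharmonic and agrees with $u$ off a compact set), but the structure is identical.
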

\begin{proof}
 Corollary \ref{cor2} implies that if $u$
satisfies  $(\Delta u)^{m}\wedge \beta^{n-m} =0,$ then it is maximal. On the other hand, assume that
$u$ is maximal and let $B \Subset \Omega$ be a ball. By Theorem \ref{thbeforemain} there exists $u_{1}\in  C(\Omega)$
such that $u = u_{1}$ in $\Omega\setminus\ B\;,\; u_{1} \in \mathcal{SH}_{m}(B)$ and $(\Delta u_{1})^{m}\wedge \beta^{n-m} =0$ in $B$.
From the comparison principle (Corollary \ref{cor2}) follows $u_{1}\geq u$ in $B$, thus $u_{1} \in \mathcal{SH}_{m}(\Omega).$
Since $u$ is maximal, then by 6) in Proposition \ref{pro4}  $u_{1} = u$ and we get $(\Delta u)^{m}\wedge \beta^{n-m} =0.$
\end{proof}
{\bf Acknowledgements.} We would  like to thank the referees for their careful reading as
well as for their comments and suggestions that have contributed to improve
the readability and quality of the paper.

\end{document}